   \def\MR#1{}
  \newcounter{constant} 
  \newcommand{\newconstant}[1]{\refstepcounter{constant}\label{#1}} 
  \newcommand{\useconstant}[1]{c_{\ref{#1}}}
\newcommand{\eqa}{\begin{eqnarray}}
\newcommand{\ena}{\end{eqnarray}}
\newcommand{\eq}{\begin{equation}}
\newcommand{\en}{\end{equation}}
\newcommand{\eqs}{\begin{eqnarray*}}
\newcommand{\ens}{\end{eqnarray*}}
\def\ti{\to\infty}
\def\X{\mathbf{X}}
\newcommand{\R}     {\mathbb{R}}
\newcommand{\Z}     {\mathbb{Z}}
\newcommand{\N}     {\mathbb{N}}
\renewcommand{\P}   {\mathbb{P}}
\newcommand{\E}     {\mathbb{E}}
 \newcommand{\floor}[1]{\left\lfloor #1 \right\rfloor}
\newcommand{\ceil}[1]{\left\lceil #1 \right\rceil}
\newcommand{\bds}{\begin{displaystyle}}
\newcommand{\eds}{\end{displaystyle}}
\newtheorem{theorem}{Theorem}[section]
\newtheorem{lemma}[theorem]{Lemma}
\newtheorem{proposition}[theorem]{Proposition}
\newtheorem{corollary}[theorem]{Corollary}
\newtheorem{definition}[theorem]{Definition}
\newcommand{\eps}{\varepsilon}
\def\1{{\mathchoice {1\mskip-4mu\mathrm l}      
{1\mskip-4mu\mathrm l}
{1\mskip-4.5mu\mathrm l} {1\mskip-5mu\mathrm l}}}
\newcommand{\ssup}[1] {{{\scriptscriptstyle{({#1}})}}}
\def\comment#1{}
\begin{document}
\title[ORRW and Self-Interacting RWs  beyond exchangeability I]{Once-Reinforced  and Self-Interacting Random Walks beyond exchangeability.
}
\author[A. Collevecchio and P. Tarr\`es]{Andrea Collevecchio and Pierre Tarr\`es\\[1ex] 
\tiny \it  Dedicated to Vladas Sidoravicius}

\date{}

\begin{abstract}
We present the first rigorous quantitative analysis of once-reinforced random walks (ORRW) on general graphs, based on a novel change of measure formula.~This enables us to prove large deviations estimates for the range of the walk to have cardinality of the order $N^{d/(d+2)}$ in dimension larger than or equal than two. We also prove that ORRW is transient on all non-amenable graphs for small reinforcement.~Moreover, we study the shape of oriented ORRW on euclidean lattices. 

We also provide a new approach to the study of general self-interacting random walk, which we apply to random walk in random environment, reinforced processes on oriented graphs, including the directed ORRW. 
\end{abstract}
\subjclass{60K35, 60F15, 60J70}
\keywords{Once-Reinforced Random Walk}
\maketitle


\section{Introduction}\label{intro}
 In 1990, Burgess Davis \cite{Dav90} introduced a simple reinforcement model, called Once-Reinforced
Random Walk (ORRW), which can be roughly described as follows. To each edge of a locally finite graph
assign a positive weight. The process jumps to nearest neighbors. An edge is traversed with a
probability proportional to its weight. The very first time an edge is traversed its weight changes, and then
remains unaltered.
Despite its deceptively simple definition, ORRW turns out to be difficult to analyze, and many conjectures have
been made on its behaviour on the multidimensional lattice $\Z^d$.
\\
The ORRW is part of the larger class of Reinforced random walks introduced by Coppersmith and Diaconis \cite{CD87}, and the first model
to be considered was the so-called Linearly Edge-Reinforced Random Walk (LERRW), where each time an edge is
traversed by the process, its weight is increased by a constant. Several results are available for LERRW (see \cite{ST15}  \cite{DST15} \cite{ACK14} \cite{RM.06}) where the authors use partial exchangeability. The latter property is
not satisfied by ORRW, and this explains why this process seems harder to study when
compared to LERRW.

 The ORRW has been extensively studied on $\Z$ and other trees, where ruin probabilities can be computed.~ Durrett, Kesten and  Limic proved transience of ORRW on regular trees \cite{DKL02}.
 Collevecchio  \cite{Coll06} proved transience of ORRW on supercritical Galton-Watson trees, while Pfaﬀelhuber and  Stiefel  \cite{PS21} studied the range of ORRW on $\Z$. Kious and Sidoravicius \cite{KS18} showed that ORRW exhibits a phase transition between recurrence/transience on certain polynomial trees.  A complete analysis of ORRW on trees that grow polynomially was provided in \cite{CKS20},  where the authors identify the critical value of the reinforcement for recurrence/transience.

Very few results are available when it is defined on graphs with cycles.
Thomas Sellke proved that the process is recurrent on  ladders $\Z \times \{1, 2, \ldots, d\}$ for small reinforcement and  Vervoort  \cite{Ve00} and  Kious, Schapira and Singh \cite{KSS18} proved recurrence for large $\delta$. Notice that there is no complete picture for recurrence/transience on the ladder.

There are many long-standing conjectures regarding the behaviour  of ORRW on $\Z^d$ and non-amenable graphs. 

A model that mimics features of ORRW was introduced and studied in \cite{DGHS21} where the authors
write:\\
“It is conjectured that for both ORRW and OERW\footnote{Stands for origin-excited random walk which we do not cover in this paper.} the evolution leads to the formation of
an asymptotic shape as time goes to infinity, but there is no clear vision on how to attack the
problem.“\\
Vincent Beffara, in his Habilitation  \cite{Beffara} (page 54,  conjecture 7), conjectured the following for ORRW on $\Z^2$. \\
''There exists a positive constant $b_{0} > 0$  (such that, whenever $b > b_{0}$, the process $(X_t)$ is recurrent\footnote{the larger is $b$ the larger is the reinforcement in Beffara notation. We use a different parametrization in our paper.}. Moreover, if $K_t$ is the set of points visited by time $t$, then almost surely
\[
|K_t| \approx t^{2/3}, \qquad \operatorname{diam} K_t \approx t^{1/3},
\]
and the rescaled set $K_t / \operatorname{diam} K_t$ converges in probability, in the Hausdorff topology, to a deterministic asymptotic shape $K_\infty$.
If the conjecture holds, it is very likely that in fact $b_{0} = 0$, i.e.\ that arbitrarily small reinforcement is sufficient to place the process in this sub-diffusive regime."\\

In Section \ref{su:orrw}, we provide a large deviations type  bound for the range of ORRW on $\Z^d$, with $d\ge2$, to have
cardinality of the order $t^{d/(d+2)}$.  Moreover, we prove that ORRW on non-amenable graphs  is  transient for all  small enough reinforcements. 

Along the way, we prove results that have some independent interest, even for simple random
walks (see Corollary 1.7 and Section 2), and negatively reinforced random walks, which are weakly
self-avoiding walks. 
Based on the technology  introduced in this paper, we believe we can show that the shape theorem  does not hold for  ORRW on $\Z^2$ for all small positive reinforcement: this part of the project is at a very advanced stage but still unfinished.

Our second set  of  results in Section \ref{su:gltt} concerns local time theorems for a large class of self-interacting processes, which includes random walks in random environment, general directed reinforced processes, which we apply in particular to the directed ORRW.

\subsection{Once-Reinforced Random Walks on General Graphs}
\label{su:orrw}
 We consider processes that take values  on  the vertices of  a locally bounded rooted graph $G= (V, E)$.  They start from the root, which we denote by $0$. In the case of $\Z^d$ we assume that the origin is the root. Initially to
each edge is assigned weight one.~When the process traverses an edge for the first time,   its weight is
upgraded to $1/a$, and then never changes again.
This walk can be embedded in a continuous time process $\X = (X_t)_{t\ge 0}$ which we define rigorously as
follows. Set $X_0 =  0$, i.e. the root. The process $\X$ jumps to nearest neighbors vertices. Suppose
we defined $(X_u)_{u\le t}$, and let
$$
\mathfrak{C}_t := \{e \in E\colon \mbox{ edge $e$ is traversed by $\X$ by time $t$}\},
$$
i.e. the edge range of the process by time $t$.~Define $\mathcal{F}_t = \sigma(X_u\colon u \le t).$  We have that on $\{X_t = x\}$,
$$ \P(X_{t + h} = j\;|\; X_t =i, \mathcal{F}_t) =  \1_{\{\{i,j\} \in \mathfrak{C}_t\}} h    + a \1_{\{\{i,j\} \notin \mathfrak{C}_t\}} h   + o(h),
$$
where $y \sim x$. The probability to have more than one jump in the interval $(t, t + h)$ is $o(h)$. Denote by $\mathfrak{R}_t$ the
 vertex range of the process $\X$ by time $t$, i.e.
$$
\mathfrak{R}_t :=  \{x \in V \colon \exists s \in [0,t] \mbox{ such that } X_s = x\}. 
$$
 From now on, we use  $\P^{\ssup  a}$ to denote the probability measure on this space relative to ORRW($a$) and $\P$ is used for  the probability measure
relative to continuous time simple symmetric random walk. 
The process $\X$ is right-continuous. 
Let $(\tau_i)_i$ be the times when the process $\X$ jumps, i.e. $\tau_0 :=0$ and $\tau_n := \inf\{t > \tau_{n-1}\colon X_t \neq X_{\tau_{n-1}}\}$.
For $x \in \R^d$, let $\|x\| = \|x\|_2$ be its euclidean distance from the origin, and ${\rm Ball}(0, r) := \{x \in \R\colon \|x\| \le r\}$. Let  $\lambda_d$ the principal eigenvalue of the operator $-\Delta/2$ on ${\rm Ball}(0, 1)$ with Dirichlet boundary conditions. Set 
 $\omega_d$ to be the volume of ${\rm Ball}(0, 1)$, and
$$
\psi_d :=\frac{d + 2}2
\left(\frac{
2\lambda_d}d \right)
\omega^{2/(2+d)}_d.
$$
Our first result is a large deviations-type of  bound. For any finite set $A$, we use either $|A|$ or  ${\rm card}(A)$ to
denote its cardinality.
\begin{mdframed}
[style=MyFrame2]
\begin{theorem}\label{th:main}
Consider ORRW with parameter $a > 0$.  Choose $p > 1$ such that $(1-a)p < 1$.
Set
\begin{equation}\label{eq:defnu}
\nu(d, a, u, p) :=\begin{cases} -u\left(\frac{p-1}p -
\frac{2d}p
\log(1 - p(1 - a)) \right)+
\frac{p - 1}p
\psi_d, &\qquad \mbox{if $a \in (0,1)$}\\
 -u\left(d (\log a)+1\right) + \psi_d,  &\qquad \mbox{if $a \ge 1$}.
\end{cases}
\end{equation}
We have, for $u >0$, 
\begin{equation}\label{eq:largedeviR}
\P^{\ssup a}\left(|\mathfrak{R}_{\tau_N}| \le uN^{d/(d+2)}\right) \le \exp\{-(\nu(d, a, u, p) + o(1))N^{d/(d+2)}\},
\end{equation}
where $o(1)$  stands for a quantity which depends on $a, p,d$ and $u$ and approaches zero as $N \ti$, uniformly in $a$. 
It is worth noting that for any $a \in (0,1)$ and $p>1$ satisfying $(1-a)p < 1$, one has  $\sup_{u >0} \nu(d, a, u, p)>0$.
\end{theorem}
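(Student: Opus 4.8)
\emph{Reduction and change of measure.} Since $|\mathfrak R_{\tau_N}|$ is a functional of the embedded skeleton $(X_{\tau_0},\dots,X_{\tau_N})$ alone, it suffices to compare the law of the ORRW($a$) skeleton with that of the simple symmetric random walk skeleton. Writing $k_{i-1}$ for the number of edges incident to $X_{\tau_{i-1}}$ already traversed before the $i$-th jump and $m:=|\mathfrak C_{\tau_N}|$ for the edge range, the jump probabilities give the likelihood ratio
\[
\frac{d\P^{\ssup a}}{d\P}\bigg|_{\sigma(X_{\tau_0},\dots,X_{\tau_N})}=a^{\,m}\prod_{i=1}^{N}\frac{2d}{\,2d-(1-a)(2d-k_{i-1})\,}.
\]
When $a\ge1$ every factor of the product is at most $1$, and since a connected vertex set of size $r$ in $\Z^{d}$ spans at most $dr$ edges one has $m\le d|\mathfrak R_{\tau_N}|$, so on $\{|\mathfrak R_{\tau_N}|\le uN^{d/(d+2)}\}$ the ratio is at most $a^{\,duN^{d/(d+2)}}$. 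For the simple random walk I would use the exponential Chebyshev inequality and the Donsker--Varadhan-type estimate $\E[e^{-|\mathfrak R_{\tau_N}|}]\le\exp\{-(\psi_d+o(1))N^{d/(d+2)}\}$ --- the constant $\psi_d$ arising from optimising, over Euclidean balls, the competition between a ball's volume and its principal Dirichlet eigenvalue for $-\Delta/2$, with the optimality of balls secured by Faber--Krahn --- to get $\P(|\mathfrak R_{\tau_N}|\le uN^{d/(d+2)})\le\exp\{-(\psi_d-u+o(1))N^{d/(d+2)}\}$. Multiplying the two bounds yields \eqref{eq:largedeviR} with $\nu=-u(d\log a+1)+\psi_d$.

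\emph{The reinforced case $a\in(0,1)$.} Now the product in the likelihood ratio is $\ge1$, so I would split it off by Hölder's inequality with exponents $p$ and $p/(p-1)$,
\[
\P^{\ssup a}\big(|\mathfrak R_{\tau_N}|\le uN^{d/(d+2)}\big)\le \P\big(|\mathfrak R_{\tau_N}|\le uN^{d/(d+2)}\big)^{\frac{p-1}{p}}\cdot\bigg(\E\Big[\big(\tfrac{d\P^{\ssup a}}{d\P}\big)^{p}\Big]\bigg)^{1/p},
\]
the hypothesis $p(1-a)<1$ being exactly what keeps the $L^{p}$-factor finite (after integrating out the holding times one meets the Laplace transform of an $\mathrm{Exp}(2d)$ variable evaluated at the point $p(1-a)(2d-k_{i-1})<2d$). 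The first factor is controlled by the previous paragraph and contributes $\tfrac{p-1}{p}(\psi_d-u)$ to the exponent. For the $L^{p}$-moment I would partition according to the realised range $\Lambda=\mathfrak R_{\tau_N}$, a connected set containing $0$ with $|\Lambda|\le uN^{d/(d+2)}$: on $\{\mathfrak R_{\tau_N}=\Lambda\}$ one has $m\le d|\Lambda|$, the edges leaving $\Lambda$ stay fresh, and after an exploration phase of $O(|\Lambda|)$ steps the likelihood ratio is governed by the time the walk spends adjacent to $\partial\Lambda$ while surviving in $\Lambda$ for $N$ steps, which is of order $\lambda_{1}(\Lambda)N$ with $\lambda_{1}(\Lambda)$ the principal Dirichlet eigenvalue (again comparable to the ball value by isoperimetry). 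Summing the per-$\Lambda$ estimate over the $\exp\{O(uN^{d/(d+2)})\}$ admissible ranges produces a factor $(1-p(1-a))^{-2duN^{d/(d+2)}(1+o(1))}$, i.e.\ precisely the term $u\tfrac{2d}{p}\log(1-p(1-a))$ of \eqref{eq:defnu}; tracking all constants uniformly in $a$ accounts for the claimed $o(1)$, and assembling the three contributions gives \eqref{eq:largedeviR}.

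\emph{The main obstacle.} The delicate point is the $L^{p}$-moment: constraining only the \emph{size} of the range does not bound the likelihood ratio --- for instance a walk oscillating along a single edge has range $2$ yet a likelihood ratio growing exponentially in $N$ --- so the change of measure cannot be decoupled from the event by a crude Hölder split, and the realised range $\Lambda$ must be fed into the spectral estimate, which is the purpose of the partition above. This, together with making the eigenvalue asymptotics and the combinatorial count of ranges uniform in $a$, is where the real work lies.

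\emph{The closing assertion.} This part is elementary. Fix $a\in(0,1)$ and $p>1$ with $(1-a)p<1$. Then $p(1-a)\in(0,1)$, so $0<1-p(1-a)<1$ and $\log(1-p(1-a))<0$; hence the coefficient $C:=\tfrac{p-1}{p}-\tfrac{2d}{p}\log(1-p(1-a))$ satisfies $C>\tfrac{p-1}{p}>0$. Therefore $u\mapsto\nu(d,a,u,p)=\tfrac{p-1}{p}\psi_d-Cu$ is strictly decreasing on $(0,\infty)$, and
\[
\sup_{u>0}\nu(d,a,u,p)=\lim_{u\downarrow 0}\nu(d,a,u,p)=\tfrac{p-1}{p}\,\psi_d .
\]
Since $p>1$ gives $\tfrac{p-1}{p}>0$ and $\psi_d=\tfrac{d+2}{2}\big(\tfrac{2\lambda_d}{d}\big)\omega_d^{2/(2+d)}>0$ (because $d\ge2$, $\lambda_d>0$ and $\omega_d>0$), this supremum is strictly positive; in particular $\nu(d,a,u,p)>0$ for all sufficiently small $u>0$, so for such $u$ the bound \eqref{eq:largedeviR} is non-trivial (it tends to $0$ faster than any power of $N$).
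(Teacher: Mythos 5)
Your treatment of the case $a\ge 1$, of the closing positivity claim, and your discrete change-of-measure formula are all correct (the paper's Proposition~\ref{pr:changem}, after integrating out the exponential holding times, reduces exactly to your product). The gap is in the case $a\in(0,1)$, and it is the heart of the theorem.

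You correctly diagnose the obstacle in your third paragraph: after the crude Hölder split
$\P^{\ssup a}(A)\le \P(A)^{(p-1)/p}\,\E[(\mathrm{LR})^p]^{1/p}$
the $L^p$-moment no longer sees the event $\{|\mathfrak R_{\tau_N}|\le uN^{d/(d+2)}\}$, and $\E[(\mathrm{LR})^p]$ is \emph{not} finite uniformly in $N$ --- your own oscillating-walk example kills it. But the remedy you then sketch (partition by the realized range $\Lambda$ and invoke per-$\Lambda$ Dirichlet eigenvalue bounds) does not repair this, because the partition by $\Lambda$ has to be performed \emph{inside} the original expectation, not inside the already-decoupled $L^p$-factor; as written the two steps are incompatible, and the spectral estimate you gesture at would at best reproduce the Donsker--Varadhan piece, not the $\tfrac{2d}{p}\log(1-p(1-a))$ term. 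In your own words, ``this is where the real work lies'' --- and the work is missing.

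The paper's resolution is entirely different and much cleaner, and it is what you would need to supply. Using the continuous-time density $\exp\{(1-a)\sum_e T_e(\tau_N)+(\log a)|\mathfrak C_{\tau_N}|\}$ from Proposition~\ref{pr:changem}, one first observes that on the event $\{|\mathfrak R_{\tau_N}|\le uN^{d/(d+2)}\}$ one has $|\mathfrak C_{\tau_N}|\le J_N:=duN^{d/(d+2)}$, hence $\tau_N\le H_{J_N}$ (the first time the edge-range reaches $J_N$), hence $\sum_e T_e(\tau_N)\le\sum_e T_e(H_{J_N})$. Only \emph{then} is Hölder applied, so the $L^p$-moment to control is $\E\big[\exp\{p(1-a)\sum_e T_e(H_{J_N})\}\big]$, in which the random stopping time $H_{J_N}$ already encodes the range constraint. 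The key lemma you are missing is Proposition~\ref{pr:changem1} and Corollary~\ref{co:coro1}: under the SRW measure $\P$, the increments $\Delta_n=\sum_e\big(T_e(H_{n+1})-T_e(H_n)\big)$ are i.i.d.\ $\mathrm{Exponential}(1)$, so $\sum_e T_e(H_{J_N})$ is $\mathrm{Gamma}(J_N,1)$ and its $p(1-a)$-exponential moment is the closed form $(1-p(1-a))^{-J_N}$ (this is where $p(1-a)<1$ enters). That single identity replaces the entire per-$\Lambda$ spectral apparatus you propose and produces the coefficient of $u$ in $\nu$ directly. The Donsker--Varadhan factor is then attached to $\P(|\mathfrak R_{\tau_N}|\le uN^{d/(d+2)})^{(p-1)/p}$ exactly as you do in the $a\ge1$ case.
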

\end{mdframed}
\vspace{0.4cm}
We also analyse the behaviour of ORRW on non-amenable graphs, i.e.~graphs $G= (V, E)$ such that  there exists a constant $\gamma_{G}>0$, called the Cheeger constant, which satisfies the following. For any finite subgraph $G_0 \subset G$, one has 
$$
{\rm Card}(\partial^{out} G_0) \ge \gamma_{G} \cdot {\rm Card}(G_0),
$$
where $\partial^{out} G_0$ is the set of vertices in $G^c_0$ that have distance one from $G_0$.
\vspace{0.5cm}

\begin{mdframed}
[style=MyFrame2]
\begin{theorem}\label{th:maintamen}
For any non-amenable graph $G$, the following holds. \\
\noindent a) There exists $a_{G} \in (0,1)$ such that    for all $a  \in (a_{G}, 1)$ there exists $C>0$ and $\beta>0$ depending on $a$, satisfying 
$$
\P^{\ssup a}(X_{\tau_n} = 0) \le C {\rm e}^{- \beta n}.
$$
The latter implies that the process is transient for those choices of $a$.\\
\noindent b)  For all $a \in (0,1)$,
$$
\P^{\ssup a}\left(\{\X \mbox{ is recurrent}\} \cap \Big\{\lim_{t \ti} \frac{|\mathfrak{C}_t|}t  =0  \Big\}\right) =0.
$$
 \end{theorem}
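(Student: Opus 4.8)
The plan is to derive both statements from the change-of-measure comparison between $\P^{\ssup a}$ and simple random walk $\P$ set up in Section~2, together with Kesten's characterisation of amenability (the Cheeger inequality): on a non-amenable $G$ the simple random walk is reversible with spectral radius $\rho_G\in(0,1)$, so $\P(X_{\tau_n}=0)\le\rho_G^{\,n}$. The only input we need from the change-of-measure formula is the elementary likelihood ratio of the jump chains: along an admissible trajectory,
\[
\frac{d\P^{\ssup a}}{d\P}\Big|_{\mathcal F_{\tau_n}}=\prod_{i=0}^{n-1}\frac{w_i\,d_{x_i}}{a\,d_{x_i}+(1-a)k_i},
\]
where $x_i=X_{\tau_i}$ has degree $d_{x_i}$, exactly $k_i$ of whose incident edges lie in $\mathfrak C_{\tau_i}$, and $w_i=1$ if the edge used at step $i$ already lies in $\mathfrak C_{\tau_i}$ while $w_i=a$ otherwise. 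For $a\in(0,1)$ every factor is $\le 1/a$; moreover it equals $1$ when $k_i=d_{x_i}$ and is $\le 1$ at every new-edge step, so the product departs from $1$ only through factors arising at \emph{boundary steps}, i.e.\ steps at which $X_{\tau_i}$ still has an incident edge outside $\mathfrak C_{\tau_i}$. Writing $B_n$ for the number of boundary steps among the first $n$, this gives $\frac{d\P^{\ssup a}}{d\P}\big|_{\mathcal F_{\tau_n}}\le a^{-B_n}\le a^{-n}$.

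For part~a) the crude bound $a^{-n}$ already suffices: $\P^{\ssup a}(X_{\tau_n}=0)\le a^{-n}\,\P(X_{\tau_n}=0)\le(\rho_G/a)^n$. Taking $a_G:=\rho_G\in(0,1)$, for $a\in(a_G,1)$ this is $\le C\,{\rm e}^{-\beta n}$ with $\beta=\log(a/\rho_G)>0$, so $\sum_n\P^{\ssup a}(X_{\tau_n}=0)<\infty$ and Borel--Cantelli gives that $\X$ visits $0$ only finitely often a.s., i.e.\ transience. No real difficulty is expected here.

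For part~b), note first that the total jump rate at any vertex lies between $a$ and the maximal degree $D$, so a Chernoff bound for the holding times (compared with independent exponentials) yields $n/(2D)\le\tau_n\le 2n/a$ eventually, a.s.; hence $|\mathfrak C_t|/t\to0$ is a.s.\ equivalent to $|\mathfrak C_{\tau_n}|/n\to0$, and on $\{\X\text{ recurrent}\}\cap\{|\mathfrak C_t|/t\to0\}$ there are, for every fixed $\delta>0$, infinitely many $n$ with $X_{\tau_n}=0$ and $|\mathfrak C_{\tau_n}|\le\delta n$. By Borel--Cantelli it thus suffices to produce one $\delta>0$ with $\sum_n\P^{\ssup a}(X_{\tau_n}=0,\ |\mathfrak C_{\tau_n}|\le\delta n)<\infty$. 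The key is that $B_n$ is controlled by $R_n:=|\mathfrak C_{\tau_n}|$: at any boundary step the $\P^{\ssup a}$-conditional probability of traversing a fresh edge is at least $p:=a/D$, while the number of fresh traversals among the first $n$ steps equals $R_n$; applying Freedman's (or Azuma's) inequality to the martingale $\sum_{i<n}\big(\1\{\text{step }i\text{ fresh}\}-\P^{\ssup a}(\text{step }i\text{ fresh}\mid\mathcal F_{\tau_i})\big)$ gives $\P^{\ssup a}(R_n\le\delta n,\ B_n>2\delta n/p)\le{\rm e}^{-c\delta^2 n}$. Splitting on $\{B_n\le 2\delta n/p\}$, bounding the likelihood ratio by $a^{-2\delta n/p}$ there, and using $\P(X_{\tau_n}=0)\le\rho_G^{\,n}$,
\[
\P^{\ssup a}(X_{\tau_n}=0,\ R_n\le\delta n)\ \le\ {\rm e}^{-c\delta^2 n}+\exp\!\Big(n\big[\tfrac{2\delta}{p}\log\tfrac1a+\log\rho_G\big]\Big),
\]
which is summable as soon as $\delta=\delta(a,G)$ is small enough that $\tfrac{2\delta}{p}\log\tfrac1a<\tfrac12\log\tfrac1{\rho_G}$; Borel--Cantelli then concludes.

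The main obstacle is this last combination in part~b): the concentration estimate bounding $B_n$ by a constant multiple of $R_n$, which hinges on the uniform-in-history lower bound $p=a/D$ for discovering a new edge whenever the walk sits at a boundary vertex, together with the structural fact that the likelihood ratio charges nothing at interior or fresh steps. It is exactly this refinement — not the crude $a^{-n}$ bound — that makes part~b) valid for all $a\in(0,1)$; and non-amenability enters essentially, since for $\rho_G=1$ the bracketed exponent above cannot be made negative.
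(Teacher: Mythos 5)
Your argument is correct, and the overall architecture — compare $\P^{\ssup a}$ to $\P$ through a Radon--Nikodym weight, use the non-amenability bound $\P(X_{\tau_n}=0)\le {\rm e}^{-\kappa n}$, and for part~b) exploit smallness of the edge range to shrink the weight's exponent before Borel--Cantelli — matches the paper's. The route is nonetheless genuinely different in how the weight is controlled. The paper keeps the continuous-time change of measure of Proposition~\ref{pr:changem}, and its engine is distributional: under $\P$ the cumulative pre-discovery time $\sum_e T_e(H_j)$ is Gamma$(j,1)$ (Corollary~\ref{co:coro1}); on $\{|\mathfrak{C}_{\tau_n}|<\eps n\}$ one bounds $\sum_e T_e(\tau_n)\le\sum_e T_e(H_{\lceil\eps n\rceil})$ and H\"older separates the Gamma moment $\big(1-p(1-a)\big)^{-(\eps n+1)/p}$ from $\P(X_{\tau_n}=0)^{(p-1)/p}$, so the $\eps n$ exponent falls out of the Gamma shape parameter with no extra work. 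You instead integrate out holding times and work with the likelihood ratio of the embedded jump chains, written as a product of per-step factors; you bound each factor deterministically by $1/a$, observe it equals $1$ when the current vertex has no fresh incident edge, and convert ``small range'' into ``few boundary steps $B_n$'' via an Azuma/Freedman bound using the uniform lower bound $p\ge a/D$ on the fresh-edge probability. Your version is more elementary in that it bypasses the i.i.d.\ exponential structure of $(\Delta_n)$ and H\"older, but pays with an extra concentration lemma and with a bounded-degree assumption for the constant $a/D$. Two minor remarks: (i) your display $d\P^{\ssup a}/d\P\big|_{\mathcal F_{\tau_n}}$ should be read as the Radon--Nikodym derivative on the jump-chain $\sigma$-field — on the full continuous-time $\mathcal F_{\tau_n}$ there is an additional holding-time factor, and integrating it out is exactly what produces your product; (ii) for the reduction from $|\mathfrak{C}_t|/t\to0$ to $|\mathfrak{C}_{\tau_n}|/n\to0$ only the one-sided bound $\tau_n\le Cn$ (from the holding times being stochastically dominated by ${\rm Exp}(a)$) is needed, so the $n/(2D)$ lower bound can be dropped.
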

\end{mdframed}
Theorem~\ref{th:maintamen} b) states that  if ORRW($a$), for any choice of $a\in(0,1)$ was to be recurrent, then the range would not grow sub-linearly, i.e. there would be a sequence of times where the range has cardinality comparable to the  time. \\
The following result is valid for any transient graph, i.e. graphs where simple random walk is transient. 
\vspace{0.2cm}
\begin{mdframed}
[style=MyFrame2]
\begin{theorem}\label{th:main8}
Let $G$ be a transient graph. Let $S$ be the first return time to the starting point.  One has 
$$
\limsup_{a \uparrow 1}\, \E^{\ssup a}[S] = \infty.
$$
 \end{theorem}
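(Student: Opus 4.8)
The plan is to compare $\P^{\ssup a}$ with the law $\P$ of continuous-time simple random walk on a fixed finite time horizon, using the change-of-measure formula that is the main technical device of this paper, and then to feed in the transience of $G$.

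First I would fix $t>0$ and record the Radon--Nikodym derivative of $\P^{\ssup a}$ with respect to $\P$ on $\mathcal F_t$. Since the jump rate along an edge equals $1$ once that edge has been traversed and equals $a$ beforehand, the Girsanov-type formula reads
\[
\frac{d\P^{\ssup a}}{d\P}\Big|_{\mathcal F_t}=a^{|\mathfrak{C}_t|}\,\exp\!\Big((1-a)\!\int_0^t N_s\,ds\Big),
\]
where $N_s$ is the number of edges incident to $X_s$ not yet traversed by time $s$: the first factor records the $|\mathfrak{C}_t|$ jumps performed along fresh edges, and the exponent the total-rate deficit $(1-a)N_s$ at the current vertex. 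For $a\in(0,1)$ the exponential factor is $\ge1$ while $a^{|\mathfrak{C}_t|}\le1$, so this density is at least $a^{|\mathfrak{C}_t|}$. Since $\{S>t\}\in\mathcal F_t$,
\[
\P^{\ssup a}(S>t)=\E\big[\1_{\{S>t\}}\,R^{\ssup a}_t\big]\ \ge\ \E\big[\1_{\{S>t\}}\,a^{|\mathfrak{C}_t|}\big],\qquad R^{\ssup a}_t:=\tfrac{d\P^{\ssup a}}{d\P}\big|_{\mathcal F_t},
\]
and, since $|\mathfrak{C}_t|$ is an a.s.\ finite random variable under $\P$ not depending on $a$, letting $a\uparrow1$ and using dominated convergence (equivalently, Fatou applied to $R^{\ssup a}_t\ge a^{|\mathfrak{C}_t|}\to1$) gives $\liminf_{a\uparrow1}\P^{\ssup a}(S>t)\ge\P(S>t)$ for every fixed $t>0$.

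Next I would bring in transience: since simple random walk on $G$ returns to $0$ with probability strictly less than one, $\rho:=\P(S=\infty)>0$, hence $\P(S>t)\ge\rho$ for all $t$, and therefore $\liminf_{a\uparrow1}\P^{\ssup a}(S>t)\ge\rho$ for every $t$. Finally, for any $M>0$ one has $\E^{\ssup a}[S]\ge\int_0^M\P^{\ssup a}(S>t)\,dt$, so by Fatou's lemma $\liminf_{a\uparrow1}\E^{\ssup a}[S]\ge\int_0^M\liminf_{a\uparrow1}\P^{\ssup a}(S>t)\,dt\ge\rho M$; letting $M\ti$ proves $\liminf_{a\uparrow1}\E^{\ssup a}[S]=\infty$, a fortiori the stated assertion about the $\limsup$.

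The one point that needs care is the order of the limits: one cannot fix $a<1$ and let $t\ti$, for if ORRW($a$) happens to be recurrent then $\P^{\ssup a}(S>t)\to0$; the comparison with simple random walk is effective only on bounded time scales and genuinely requires $a\to1$ after $t$ is fixed. Beyond that, the whole argument rests on the validity of the change-of-measure formula (the absolute continuity $\P^{\ssup a}\ll\P$ on $\mathcal F_t$ together with the explicit density above), which is the real technical content and which I would treat as given here; the rest is two applications of Fatou plus transience. A more hands-on variant avoiding Girsanov is to observe directly on the embedded jump chain that any path of length $n$ in $G$ starting at $0$ and avoiding $0$ has $\P^{\ssup a}$-probability at least $a^n$ times its simple-random-walk probability (each traversed edge carries weight $\ge a$, while the normalising vertex weight never exceeds the degree), to sum this over all such paths, and then to return to continuous time using that the holding-time rates are bounded above by the maximal degree.
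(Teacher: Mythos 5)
Your argument is correct, and it actually proves a stronger conclusion than the paper does, via a genuinely different route. The paper argues by contradiction: it assumes $\limsup_{a\uparrow1}\E^{\ssup a}[S]<\infty$, tilts $\P$ to the measure $\widetilde{\P}={\rm e}^{c_1}\1_{\{S<\infty\}}\P$ (with $c_1=-\log\P(S<\infty)$), lets $\mathbb{Q}^{(a)}$ be $\P^{\ssup a}$ conditioned on $\{S<c_2\}$ for a suitable integer threshold $c_2$ proportional to $\E^{\ssup a}[S]$, and then bounds the Kullback--Leibler divergence between the two; as $a\uparrow1$ the bound on the KL divergence becomes $-c_1+O(1-a)\E^{\ssup a}[S]+\log\alpha<0$, contradicting non-negativity of relative entropy. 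Your proof is direct and elementary by comparison: you exploit the sign structure of the Radon--Nikodym density to get the lower bound $\P^{\ssup a}(S>t)\ge\E[\1_{\{S>t\}}a^{|\mathfrak{C}_t|}]$, let $a\uparrow1$ at fixed $t$ via dominated convergence, bring in $\P(S>t)\ge\P(S=\infty)>0$, and then apply Fatou in $t$. The net result is $\liminf_{a\uparrow1}\E^{\ssup a}[S]=\infty$, strictly stronger than the paper's $\limsup$ claim, and the argument avoids entropy entirely.

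Two minor points worth making explicit. First, the paper states Proposition~\ref{pr:changem} for events in $\mathcal F_{\tau_n}$ (a fixed number of jumps), whereas you need the density on $\mathcal F_t$ for a fixed real time $t$; the extension is routine (the residual waiting time $(\tau_{n}, t]$ contributes the factor $\exp\{(1-a)N_{\tau_n}(t-\tau_n)\}$, and $|\mathfrak{C}_t|=|\mathfrak{C}_{\tau_n}|$ on $\{\tau_n\le t<\tau_{n+1}\}$) but is not literally the statement proved in the paper, so it deserves a sentence. Second, your Fatou application needs $a\uparrow1$ to be understood along sequences, which is fine since the claim is about $\liminf$. With those caveats noted, the proposal stands; your closing remark correctly identifies why the order of limits ($t$ first, then $a$) is essential.
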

\end{mdframed}
Advanced work in progress of the authors using the same set of  techniques seems to achieve a non-shape theorem for $\mathbb{Z}^2$ for all $a$ close enough to one. 

\noindent Our proofs of Theorems~\ref{th:main} and \ref{th:main4} use   a novel "polymer" representation of ORRW, i.e. a change of measure with respect to the simple random walk measure  which we believe is of independent interest. 
For each vertex $x$ and edge $e$, we use $x \sim e$ to denote that an edge $e$ is incident to $x$.  For an edge $e$ we denote by $e^+$ and $e^-$ its endpoints, using an arbitrary choice. Let $\mathbb{L}_e$ be the first time edge $e$ is traversed by the process, i.e.
 $$
 \mathbb{L}_e := \inf\{t \colon \mbox{either  $X_{t_-} = e^+$ and $X_{t} = e^-$ or $X_{t_-} = e^-$ and $X_{t} = e^+$}\}.
 $$
\begin{mdframed}
[style=MyFrame2]
\begin{proposition}\label{pr:changem}
Consider ORRW($a$) on a graph $G= (V, E)$. Fix $n \in \N := \{0, 1, 2, \ldots\}$. For any event $A \in \mathcal{F}_{\tau_n}$, one has
$$
\P^{\ssup a}(A)= \E\left[\exp\left\{(1-a)\sum_{e \in E} T_e(\tau_n) + (\log a) |\mathfrak{C}_{\tau_n}| \right\}\1_A\right]
,$$
where $T_e(\tau_n)$ is the time the process spent adjacent to edge $e$ before time   $\mathbb{L}_e\wedge \tau_n$, i.e.
\begin{equation}\label{eq:dfT}
T_e(\tau_n) := \int_{0}^{\mathbb{L}_e\wedge \tau_n} \1_{X_u \sim e} {\rm d} u.
\end{equation}
\end{proposition}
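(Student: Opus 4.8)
The plan is to recognise the right-hand side as the Radon--Nikodym derivative of $\P^{\ssup a}$ with respect to $\P$ on the $\sigma$-algebra $\mathcal F_{\tau_n}$, and to establish this by induction on $n$, disintegrating both laws over the successive jumps of the walk. The crucial observation is that the instantaneous rates of ORRW($a$) depend on the past only through the edge range $\mathfrak C_t$: at the current vertex $x=X_t$ the total jump rate is $r_t(x):=d^{\mathrm{tr}}_t(x)+a\,d^{\mathrm{un}}_t(x)$, where $d^{\mathrm{tr}}_t(x)$ and $d^{\mathrm{un}}_t(x)$ denote the numbers of edges incident to $x$ that have, respectively have not, been traversed by time $t$; and, conditionally on a jump occurring, the walk crosses an incident traversed (resp. untraversed) edge with probability proportional to $1$ (resp. $a$). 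Since $\mathfrak C_t$ is constant on each inter-jump interval $[\tau_{k-1},\tau_k)$, on that interval the process behaves like a time-homogeneous jump chain with these frozen data; in particular, conditionally on $\mathcal F_{\tau_{k-1}}$ the holding time $s_k:=\tau_k-\tau_{k-1}$ is exponential with parameter $r_{\tau_{k-1}}(X_{\tau_{k-1}})$ and is independent of $X_{\tau_k}$, which is selected with the weights above. Under $\P$ the same description holds with all rates equal to $1$.

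For the one-step change of measure, write $x:=X_{\tau_{k-1}}$ and let $e_k$ be the edge crossed at the $k$-th jump. Because $a>0$ keeps all transition rates strictly positive, the conditional laws of the pair $(s_k,X_{\tau_k})$ given $\mathcal F_{\tau_{k-1}}$ under the two measures are mutually absolutely continuous, with density
\[
L_k\;=\;\frac{a^{\,\1_{\{e_k\notin\mathfrak C_{\tau_{k-1}}\}}}\;e^{-r_{\tau_{k-1}}(x)\,s_k}}{1\cdot e^{-\deg(x)\,s_k}}\;=\;\exp\{(1-a)\,d^{\mathrm{un}}_{\tau_{k-1}}(x)\,s_k+(\log a)\,\1_{\{e_k\notin\mathfrak C_{\tau_{k-1}}\}}\},
\]
where we used $\deg(x)-r_{\tau_{k-1}}(x)=(1-a)\,d^{\mathrm{un}}_{\tau_{k-1}}(x)$.

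It remains to recognise the exponent of $L_k$ as the increment over $[\tau_{k-1},\tau_k)$ of the two quantities appearing in the statement, and to telescope. First, $|\mathfrak C_{\tau_k}|-|\mathfrak C_{\tau_{k-1}}|=\1_{\{e_k\notin\mathfrak C_{\tau_{k-1}}\}}$, since the $k$-th jump enlarges the edge range by exactly one edge precisely when it crosses a previously untraversed edge. Second, from \eqref{eq:dfT} one checks that for every edge $e$ one has $T_e(\tau_k)-T_e(\tau_{k-1})=s_k\,\1_{\{x\sim e,\;e\notin\mathfrak C_{\tau_{k-1}}\}}$ — the interval $[\tau_{k-1},\tau_k)$ contributes to $T_e$ exactly when $e$ is incident to $x$ and has not been traversed strictly before $\tau_k$, the boundary case $\mathbb{L}_e=\tau_k$ being harmless since then $x\sim e$ — hence $\sum_{e\in E}(T_e(\tau_k)-T_e(\tau_{k-1}))=d^{\mathrm{un}}_{\tau_{k-1}}(x)\,s_k$ (equivalently, $\sum_{e\in E}T_e(\tau_n)=\int_0^{\tau_n}d^{\mathrm{un}}_u(X_u)\,{\rm d}u$ by Fubini). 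Thus $L_k=\exp\bigl\{(1-a)\sum_{e\in E}(T_e(\tau_k)-T_e(\tau_{k-1}))+(\log a)(|\mathfrak C_{\tau_k}|-|\mathfrak C_{\tau_{k-1}}|)\bigr\}$. Now run the induction: for $n=0$ both sides equal $1$, since $\mathfrak C_{\tau_0}=\varnothing$, $T_e(\tau_0)=0$ and $\mathcal F_{\tau_0}$ is trivial; assuming the identity on $\mathcal F_{\tau_{n-1}}$, for $A\in\mathcal F_{\tau_n}$ condition on $\mathcal F_{\tau_{n-1}}$, pass from $\P$ to $\P^{\ssup a}$ in the conditional law of the $n$-th step through the density $L_n$, and apply the inductive hypothesis to the remaining $\mathcal F_{\tau_{n-1}}$-measurable factor, to obtain $\P^{\ssup a}(A)=\E\bigl[\prod_{k=1}^{n}L_k\,\1_A\bigr]$; the product telescopes to $\exp\bigl\{(1-a)\sum_{e\in E}T_e(\tau_n)+(\log a)|\mathfrak C_{\tau_n}|\bigr\}$, which is the assertion.

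The two points that will require care are: (i) justifying that between consecutive jumps ORRW($a$) genuinely is a time-homogeneous jump chain with the frozen rates, so that $s_k$ is exactly exponential conditionally on $\mathcal F_{\tau_{k-1}}$ — this uses adaptedness of the rates together with the fact that $\mathfrak C_t$ is piecewise constant — and the attendant measurability bookkeeping for the disintegration over jumps; and (ii) the identity $\sum_{e\in E}(T_e(\tau_k)-T_e(\tau_{k-1}))=d^{\mathrm{un}}_{\tau_{k-1}}(X_{\tau_{k-1}})\,s_k$, which is where the precise definition \eqref{eq:dfT} of $T_e$ is used. A more compact but less elementary alternative would be to invoke directly the Girsanov-type formula for the density of one marked point process with respect to another in terms of their predictable intensities, which produces the same exponential at once; I prefer the inductive argument, as it is self-contained and makes the combinatorial meaning of each factor transparent.
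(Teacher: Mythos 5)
Your proof is correct and takes essentially the same approach as the paper: both compute the Radon--Nikodym derivative of $\P^{\ssup a}$ with respect to $\P$ on $\mathcal F_{\tau_n}$ by factoring the path measure over successive jumps, identifying each inter-jump holding time as exponential with rate $a\,d^{\mathrm{un}}+d^{\mathrm{tr}}$ (resp.\ $\deg$ under $\P$) and each jump direction as a discrete choice with weights $a$ or $1$, and then combining the resulting ratio of exponential densities and discrete weights into the claimed exponential. The paper writes this out in one display for a fixed path (eq.~\eqref{eq:pivotaldecomp}) and then rearranges, whereas you organize it as an induction with a telescoping one-step density $L_k$; these are the same computation. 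Two small points in your favour: you verify the per-step increment identity $T_e(\tau_k)-T_e(\tau_{k-1})=s_k\,\1_{\{x\sim e,\,e\notin\mathfrak C_{\tau_{k-1}}\}}$ explicitly, which the paper leaves implicit in the rearrangement step, and you use $\deg(x)$ rather than the paper's $2d$, so your version literally applies to the general graphs $G$ for which the Proposition is stated rather than only to $\Z^d$.
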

\end{mdframed}
\subsection{Local time theorems for general self-interacting random walks}
\label{su:gltt}
Next we focus on studying the distribution of local time profile for a general class of processes.  Our work generalises the work of Kious, Huang, Sidoravicius and Tarr\`es,  \cite{Pierre1} and we adopt the same notation used in that paper.
Let $G = (V, E)$ be a  connected  graph which is either finite or infinite countable.  We  assume that $G$ is locally finite. It has no multiple edges and  it is rooted. We denote by $0$ its root.  Obtain $\vec{E}$  by replacing each edge in $E$ with two oriented edges, joining the same pair of neighbors in both directions. 
Fix a designated vertex $i_1$, which is the end point of a path. For $G' \subset G$, which contains $i_1$, denote by $\vec{\mathcal{T}}_{i_1}(G')$ the collection of oriented spanning trees $\vec{T}$ of $G'$ rooted at $i_1$. These spanning trees are oriented, and the orientation points from the leaves towards $i_1$.  In this scenario, the root $i_1$ is  the unique vertex from which no edge emanates in the spanning tree. We denote by $\delta_i(j) = \mathbf{1}_{\{i = j\}}$ the Kronecker delta.
Let $\mathcal{I}$ be the set of \emph{currents} on the graph, i.e.,
\[
\mathcal{I} := \left\{ b \in \mathbb{Z}^{\vec{E}} : b_{j, i} = -b_{i, j}, \quad i, j \in V \mbox{ with } i \sim j \right\}.
\]
For any $b \in \mathbb{Z}^{\vec{E}}$ and $i \in V$, let $b_i := \sum_{j \sim i} b_{i, j}$. If $b \in \mathcal{I}$, then $b_i$ can be interpreted as the divergence of $b$ at site $i$.
For any $k \in \mathbb{N}^{\vec{E}}$, let $b(k) \in \mathcal{I}$ be defined by $b(k)_{i,j} = k_{i, j} - k_{j,i}$. For any $b \in \mathcal{I}$ and any oriented spanning tree $\vec{T}$ of a connected subset $G'$ of $G$, define the adjusted current $\tilde{b}$ by:
\[
\tilde{b}_{i,j}(\vec{T}) = b_{i, j} - \mathbf{1}_{\{(i, j) \in \vec{T}\}} + \mathbf{1}_{\{(j, i) \in \vec{T}\}}, \quad \mbox{ for } \quad (i, j) \in \vec{E}.
\]
For any $\sigma > 0$, possibly a stopping time,  and any right-continuous path $x = (x(t))_{t \geq 0}$, define $\ell(x, \sigma) \in(0, \infty)^V$ as the vector of local times at time $\sigma$, that is:
\[
\ell(x, \sigma)_i = \int_0^\sigma \mathbf{1}_{\{x(s) = i\}} \, ds, \quad i \in V.
\]
Define $k(x, \sigma) = (k_{i, j}(x, \sigma))_{(i,j) \in \vec{E}}$ as the vector of oriented edge crossings up to time $\sigma$, i.e. 
\begin{equation}\label{eq:K}
k_{i, j}(x, \sigma) = \left| \{ t \leq \sigma : x_{t^-} = i, x_t = j \} \right|.
\end{equation}
Let $\vec{T}(x, \sigma)$ be the last-exit tree of the path $x$ on the interval $[0, \sigma]$, defined as the set of directed edges corresponding to the last departures from each visited vertex, except the terminal vertex $x(\sigma)$. That is, $(i, j) \in \vec{T}(x, \sigma)$ if there exists $t \in (0, \sigma]$ such that $(x_{t^-}, x_t) = (i, j)$ and $x(s) \ne i$ for all $s \geq t$. Let 
\begin{equation}\label{eq:defH}
h_{i,j}(\vec{T}) := \begin{cases} 1 \qquad \mbox{if $(i,j) \in \vec{T}$}\\
 0 \qquad \mbox{otherwise}.
 \end{cases}
 \end{equation}
 \begin{mdframed}[style=MyFrame1]
 \begin{definition}
 Fix a function $f \colon (0, \infty)^2 \rightarrow (0, \infty)$.
Consider a point process $(Y^{\ssup f}_t)_t$ which takes values on $\N$, is right-continuous, and is parametrised by $f$.  It satisfies $Y^{\ssup f}_{t+h} - Y^{\ssup f}_t \in \N$ and
$$
\P(Y^{\ssup f}_{t+h} - Y^{\ssup f}_t =1\;|\; Y^{\ssup f}_t) = f(Y^{\ssup f}_t, t) h + o(h), \qquad \P(Y^{\ssup f}_{t+h} - Y^{\ssup f}_t \ge 2\;|\; Y_t) =  o(h).
$$
Define 
\begin{equation}
\mathcal{P}(f, n , t) := \P(Y^{\ssup f}_t = n), \qquad \mbox{and} \qquad \mathcal{P}^*(f, n , t) {\rm d} t := \P(Y^{\ssup f}_{t^-} = n-1, Y^{\ssup f}_{t+dt} - Y^{\ssup f}_t =1).
\end{equation}
Set 
$$
\hat{\mathcal{P}}(f,  t,  z) := \sum_{n = 0}^\infty \mathcal{P}(f, t, n) {\rm e}^{2 \pi i n z}, \qquad \mbox{and} \qquad \hat{\mathcal{P}}^*(f,  t,  z) := \sum_{n = 1}^\infty \mathcal{P}^*(f, t, n) {\rm e}^{2 \pi i n z}.
$$
\end{definition}
\end{mdframed}
\begin{mdframed}[style=MyFrame2]
\begin{theorem}\label{th:supermainth} Let $\X$ be a nearest neighbor process  on a locally finite graph $G$ which satisfies the following property.    For any pair of neighbors $(i,j)$, there exists a function $f_{i,j}  \colon (0, \infty)^2 \rightarrow (0, \infty)$, such that on $X_t =i$ one has 
\begin{equation}
\P(X_{t+h} = j \;|\; \mathcal{F}_t) = f_{i,j}(k_{i,j}(\X, t), \ell_i(\X, t)) \delta_{i\sim j} h  +o(h),
\end{equation}
where  $\mathcal{F}_t = \sigma(X_u\colon u \in [0,t])$and $\delta_{i\sim j}$ is one if $i\sim j$ and is otherwise  zero. Fix  a connected subgraph $G' \subset G$, where $G' = (V', E')$ and $0 \in V'$. Fix $i_1 \in V'$ and a local time profile $\ell \in (0,\infty)^{V'}$ with $\sum_i \ell_i = t$ and $k \in \N^{E'}$ such that  $b(k)_i = \delta_{0}(i) - \delta_{i_1}(i)$ for all $i \in V'$.  Finally let $\vec{T}$ be an oriented  spanning tree of $G'$ rooted at $i_1$. One has
\begin{equation}\label{eq:loctim-1}
\begin{aligned}
\mathbb{P} \big(&k(\X, t) = k,\; \ell(\X, t) \in (\ell, \ell+ d\ell),\; \vec{T}(\X, t) = \vec{T} \big)\\
&= \prod_{(i,j) \in \vec{E'}\setminus{\vec T}} \mathcal{P}(f_{i,j}, k_{i,j}, \ell_i)\prod_{(i,j) \in {\vec T}} \mathcal{P}^*(f_{i,j}, k_{i,j}, \ell_i) m_t({\rm d}\ell),
\end{aligned}
\end{equation}
where $m_t$ is the lebesgue measure on the simplex.
Moreover 
\begin{equation}\label{eq:loctim-1.01}
\begin{aligned}
\mathbb{P} \big(&\ell(\X, t) \in (\ell, \ell+ d\ell),\; \vec{T}(\X, t) = \vec{T} \big)\\
&= \frac{1}{(2\pi)^{|V'|-1}}\int_{[0, 2\pi]^{V'\setminus\{i_1\} }} \prod_{(i,j) \in \vec{E}'\setminus{\vec T}} \hat{\mathcal{P}}(f_{i,j}, \ell_i, x_i - x_j)\prod_{(i,j) \in {\vec T}} \hat{\mathcal{P}}^*(f_{i,j}, \ell_i, x_i - x_j) {\rm d} x,
\end{aligned}
\end{equation}
where we set $x_{i_1} =0$.
\end{theorem}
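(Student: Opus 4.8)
Here is how I would prove Theorem~\ref{th:supermainth}.

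\medskip
\noindent\emph{Strategy.} The idea is to compute the law of the whole trajectory of $\X$ on $[0,t]$, to re‑parametrise the contribution of each vertex by the local time already accumulated there, and to recognise the resulting edge‑by‑edge factors as $\mathcal{P}$ and $\mathcal{P}^{*}$; formula \eqref{eq:loctim-1.01} will then follow from \eqref{eq:loctim-1} by Fourier inversion of the constraint $b(k)_i=\delta_0(i)-\delta_{i_1}(i)$.

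\medskip
Fix a finite directed path $\gamma=(e_1,\dots,e_m)$ in $\vec{E}'$ from $0$ to $i_1$, with vertices $0=x_0,x_1,\dots,x_m=i_1$, and condition on the jump chain of $\X$ being $\gamma$ with holding times near $(h_1,\dots,h_m)$ (followed by a sojourn of length $t-\sum_l h_l\ge 0$ at $i_1$). Assuming, as is implicit, that $\X$ does not explode before time $t$, the intensity description of $\X$ gives on this event the density with respect to $dh_1\cdots dh_m$
\[
\Bigl(\prod_{l=1}^{m} f_{e_l}\!\bigl(k_{e_l}(\X,\tau_l^-),\ \ell_{x_{l-1}}(\X,\tau_l^-)\bigr)\Bigr)\exp\!\Bigl(-\int_0^{t}\sum_{j\sim X_s} f_{X_s,j}\!\bigl(k_{X_s,j}(\X,s),\ \ell_{X_s}(\X,s)\bigr)\,ds\Bigr),
\]
with $\tau_l$ the time of the $l$‑th jump. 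For each directed edge $(i,j)$ and $1\le r\le k_{i,j}$ put $u^{(i,j)}_r$ equal to the value of $\ell_i(\X,\cdot)$ at the instant of the $r$‑th crossing of $(i,j)$; for fixed $\gamma$ the map $(h_1,\dots,h_m)\mapsto (u^{(i,j)}_r)_{(i,j),r}$ is triangular up to a permutation, hence of unit Jacobian. Substituting $u=\ell_i(\X,s)$ in the exponent (so $k_{i,j}(\X,s)$ becomes the step function $\kappa^{(i,j)}(u):=\#\{r:\,u^{(i,j)}_r\le u\}$) and regrouping the rate product by departing vertex and then by traversed edge turns the density above into the product over \emph{directed edges}
\[
\prod_{(i,j)\in\vec{E}'} g_{i,j}\!\bigl(u^{(i,j)}_\bullet\bigr),\qquad g_{i,j}\!\bigl(u^{(i,j)}_\bullet\bigr):=\Bigl(\prod_{r=1}^{k_{i,j}} f_{i,j}(r-1,u^{(i,j)}_r)\Bigr)\exp\!\Bigl(-\int_0^{\ell_i} f_{i,j}\!\bigl(\kappa^{(i,j)}(u),u\bigr)\,du\Bigr),
\]
where $\ell_i=\max_{j\sim i}u^{(i,j)}_{k_{i,j}}$ for $i\ne i_1$ and $\ell_{i_1}=t-\sum_{i\ne i_1}\ell_i$; and $g_{i,j}(u^{(i,j)}_\bullet)$ is exactly the density that the point process $Y^{\ssup{f_{i,j}}}$ jumps precisely at these times and at no later time before $\ell_i$.

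\medskip
The crux is the combinatorial step. Summing over all $\gamma$ compatible with a prescribed crossing vector $k$ (with $b(k)_i=\delta_0(i)-\delta_{i_1}(i)$) and a prescribed last‑exit tree $\vec{T}$ rooted at $i_1$, one must check that the families $(u^{(i,j)}_\bullet)$ produced above fill up, up to a null set and essentially disjointly, exactly those families that are strictly increasing within each edge and such that, at every $i\ne i_1$, the largest of the final crossing times $u^{(i,j)}_{k_{i,j}}$ over $j\sim i$ is attained along the $\vec{T}$‑edge out of $i$ (equivalently $u^{(i,j)}_{k_{i,j}}=\ell_i$ on that edge and $<\ell_i$ on every other edge out of $i$; at $i_1$, all of them are $<\ell_{i_1}$), with \emph{no} further compatibility condition among edges at a common vertex. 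This rests on the two facts that (i) such a family determines, by following the available exits in order of local time and starting from $0$, a \emph{unique} jump chain, and (ii) this chain never deadlocks at an interior vertex before all prescribed crossings are used, precisely because $\vec{T}$ is connected and oriented towards $i_1$: this is the analogue, in the present continuous‑time and path‑dependent setting, of the van~Aardenne‑Ehrenfest--de~Bruijn theorem, and I would carry it out as in \cite{Pierre1}. Note that distinct interleavings of different edges at a common vertex correspond to distinct regions of $u$‑space, so no combinatorial multiplicity appears. I expect this to be the main obstacle; everything else is a careful but routine jump‑process computation together with Fubini and Fourier inversion.

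\medskip
Granting this, one changes variables from the maximal crossing times along the $\vec{T}$‑edges to the profile $\ell$, with unit Jacobian and $\ell_{i_1}$ then fixed by $\sum_i\ell_i=t$; at fixed $\ell$ the integral of $\prod_{(i,j)}g_{i,j}$ over the admissible region factorises over directed edges, an edge $(i,j)\notin\vec{T}$ contributing $\int_{0<u_1<\dots<u_{k_{i,j}}<\ell_i}g_{i,j}=\mathcal{P}(f_{i,j},k_{i,j},\ell_i)$ and an edge $(i,j)\in\vec{T}$, for which $u_{k_{i,j}}=\ell_i$ is frozen, contributing $\mathcal{P}^{*}(f_{i,j},k_{i,j},\ell_i)$ — both directly from the definitions of $\mathcal{P}$ and $\mathcal{P}^{*}$ — while the $\ell$‑integration produces $m_t(d\ell)$; this is \eqref{eq:loctim-1}. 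Finally, summing \eqref{eq:loctim-1} over all $k$ with $b(k)_i=\delta_0(i)-\delta_{i_1}(i)$ and using that $\sum_i b(k)_i\equiv 0$, so that only the $|V'|-1$ constraints at the vertices $i\ne i_1$ are independent, one writes each of them as a Fourier integral in a torus variable $x_i$, interchanges sum and integral (legitimate since the total sum is at most $1$), and uses $\sum_i x_i\,b(k)_i=\sum_{(i,j)\in\vec{E}'}(x_i-x_j)k_{i,j}$ to factorise the $k$‑sum over directed edges, each edge producing $\hat{\mathcal{P}}(f_{i,j},\ell_i,x_i-x_j)$, respectively $\hat{\mathcal{P}}^{*}$ on the $\vec{T}$‑edges; with $x_{i_1}=0$ this gives \eqref{eq:loctim-1.01}.
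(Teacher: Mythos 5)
Your proof is correct and follows essentially the same strategy as the paper's: factorise the law over directed edges after reparametrising by local time, with the $\vec{T}$-edges carrying the constrained last-jump contribution, and then obtain the marginal in $\ell$ by Fourier inversion (Poisson summation) of the divergence constraint. The paper's write-up is much terser---it simply asserts the vertex-by-vertex event decomposition and the resulting edge factorisation---whereas you spell out the supporting machinery (trajectory density, local-time change of variables, and the combinatorial BEST-theorem-type bijection in the spirit of Kious, Huang, Sidoravicius and Tarr\`es) that makes that assertion rigorous.
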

\end{mdframed}
\vspace{0.3cm}
Below, we discuss few examples where Theorem~\ref{th:supermainth} can be applied. We focus on random walks in a random environment, general oriented reinforced processes, with an emphasis on oriented-once reinforced random walks. 

\subsubsection{Random walks in a random environment}

For each fixed environment $\omega$ and fixed vertex $v_0\in V$, the random walk in environment $\omega$ starting from $0$ is the nearest-neighbour Markov chain $\mathbf X= (X_n)_{n\ge0}$ taking values on $V$ with transition law $P_{\omega,v}$ given by $P_{\omega}(X_0=0)=1$ and
$$P_{\omega}(X_{t + h}=v|X_t=u)= \omega_{u,v} h + o(h).$$
We call $\P_{\omega}$ the \textit{quenched law} of $\mathbf X$. Let
$$\mathsf{P}(\cdot)=\int_{\Omega}P_{\omega}(\cdot) \text{d}\P(\omega)$$
which defines a probability measure on the space of nearest neighbour trajectories on $G$. We call $\mathsf P$ the \textit{annealed law} of $\mathbf X$. We denote by $\E$, $E_{\omega}$
 and $\mathsf E $ the expectations corresponding to the probability measures $\P$, $P_{\omega}$ and $\mathsf P$ respectively. 
\vspace{.6cm}
\begin{mdframed}[style=MyFrame2]
\begin{theorem}\label{th:RWREdirec}  Suppose that $G$ is a  regular graph. Define $\Psi \colon \N^\Delta \rightarrow (0, \infty)$, as 
$$ \Psi(n_1, n_2, \ldots, n_\Delta) := \E\left[\prod_{i\sim 0} \omega(0, i)^{n_i}\right].$$
Fix a connected subgraph $G' \subset G$, where $G' = (V', E')$, with $0 \in V'$ and a designated vertex $i_1 \in V'$. Fix a local time profile $\ell \in (0,\infty)^{V'}$ with $\sum_i \ell_i = t$ and $k \in \N^{E'}$ such that  $b(k)_i = \delta_{0}(i) - \delta_{i_1}(i)$ for all $i \in V'$.  Finally let $\vec{T}$ be an oriented  spanning tree of $G'$ rooted at $i_1$. One has
\begin{equation}\label{eq:loctim}
\begin{aligned}
\mathsf{P} \big(&k(\X, t) = k,\; \ell(\X, t) \in (\ell, \ell+ d\ell),\; \vec{T}(\X, t) = \vec{T} \big)  \\
&=  {\rm e}^{-t} \ell_{i_1}
 \left(\prod_{i\in V'} \ell_{i}^{k_{i} - 1} \Psi((k_{i,j}))_{j \colon j \sim i}) \prod_{j \colon j \sim i} \frac{1}{(k_{i,j} - h_{i,j}(\vec{T}))!}\right) m_t({\rm d} \ell).
\end{aligned}
\end{equation}
\end{theorem}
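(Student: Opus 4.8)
The plan is to derive Theorem~\ref{th:RWREdirec} from Theorem~\ref{th:supermainth} by conditioning on the environment. Fix $\omega$. Under the quenched law $P_\omega$ the process $\X$ is a continuous-time nearest-neighbour chain which, on $\{X_t=i\}$, jumps to a neighbour $j$ at the constant rate $\omega_{i,j}$; thus it is a self-interacting walk of the form required by Theorem~\ref{th:supermainth}, with the degenerate choice $f_{i,j}(n,s):=\omega_{i,j}$, independent of the crossing count $n$ and of the local time $s$. The hypotheses imposed on $G'$, $i_1$, $\ell$, $k$ and $\vec T$ in Theorem~\ref{th:RWREdirec} coincide verbatim with those of Theorem~\ref{th:supermainth}, so \eqref{eq:loctim-1} applies to $P_\omega$ with no further checking. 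The first step is then to evaluate the point-process quantities for a constant intensity $\lambda=\omega_{i,j}$: the process $Y^{(f_{i,j})}$ is a homogeneous Poisson process of rate $\omega_{i,j}$, so $\mathcal P(f_{i,j},n,s)=e^{-\omega_{i,j}s}(\omega_{i,j}s)^n/n!$ and $\mathcal P^*(f_{i,j},n,s)=e^{-\omega_{i,j}s}\,\omega_{i,j}^{\,n}\,s^{\,n-1}/(n-1)!$.

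Substituting these into \eqref{eq:loctim-1} and writing $h_{i,j}:=h_{i,j}(\vec T)$, one checks that every edge $(i,j)\in\vec E'$ contributes the \emph{same} factor $e^{-\omega_{i,j}\ell_i}\,\omega_{i,j}^{\,k_{i,j}}\,\ell_i^{\,k_{i,j}-h_{i,j}}/(k_{i,j}-h_{i,j})!$, whether $(i,j)\in\vec T$ (then $h_{i,j}=1$, $k_{i,j}\ge1$) or $(i,j)\in\vec E'\setminus\vec T$ (then $h_{i,j}=0$). To this one must add, for each $i\in V'$ and each $j\sim i$ with $j\notin V'$, the probability $\mathcal P(f_{i,j},0,\ell_i)=e^{-\omega_{i,j}\ell_i}$ that the walk never leaves $G'$ across $(i,j)$; combining all exponentials over the full neighbourhood of each vertex gives $\prod_{i\in V'}\exp\!\big(-\ell_i\sum_{j\sim i}\omega_{i,j}\big)$, which by the normalisation $\sum_{j\sim i}\omega_{i,j}=1$ of the environment equals $e^{-\sum_{i\in V'}\ell_i}=e^{-t}$. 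Next I would regroup $\prod_{(i,j)\in\vec E'}$ as $\prod_{i\in V'}\prod_{j\sim i}$ (the terms with $j\notin V'$ being trivial since then $k_{i,j}=h_{i,j}=0$): the exponent of $\ell_i$ is $\sum_{j\sim i}(k_{i,j}-h_{i,j})=k_i-(1-\delta_{i_1}(i))$, because an oriented spanning tree of $G'$ rooted at $i_1$ has exactly one outgoing edge at every vertex except $i_1$, and none at $i_1$; hence $\prod_{i\in V'}\ell_i^{\,k_i-1+\delta_{i_1}(i)}=\ell_{i_1}\prod_{i\in V'}\ell_i^{\,k_i-1}$.

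Collecting these computations yields the quenched identity
\[
P_\omega\big(k(\X,t)=k,\ \ell(\X,t)\in(\ell,\ell+d\ell),\ \vec T(\X,t)=\vec T\big)=e^{-t}\,\ell_{i_1}\!\left(\prod_{i\in V'}\ell_i^{\,k_i-1}\prod_{j\sim i}\frac{\omega_{i,j}^{\,k_{i,j}}}{(k_{i,j}-h_{i,j})!}\right)m_t(d\ell).
\]
It remains to integrate over $\omega$ against $\P$, which is legitimate by Tonelli since everything is non-negative. As $e^{-t}$, the powers $\ell_i^{k_i-1}$ and the factorials do not involve $\omega$, only $\E\big[\prod_{i\in V'}\prod_{j\sim i}\omega_{i,j}^{\,k_{i,j}}\big]$ survives; using that the single-site environments $(\omega_{i,\cdot})_{i\in V}$ are independent across vertices with common law (so their moments are those at the root), this factorises as $\prod_{i\in V'}\E\big[\prod_{j\sim i}\omega_{i,j}^{\,k_{i,j}}\big]=\prod_{i\in V'}\Psi\big((k_{i,j})_{j\sim i}\big)$ by the definition of $\Psi$ and the regularity of $G$. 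Reassembling gives precisely \eqref{eq:loctim}.

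The substance of the argument is entirely in the bookkeeping: recognising that the quenched walk is the constant-rate instance of Theorem~\ref{th:supermainth}, evaluating the Poissonian $\mathcal P$ and $\mathcal P^*$, and — the one point that genuinely requires care — keeping the no-escape factors across $\partial^{out}G'$ so that the exponential collapses to the deterministic $e^{-t}$ via $\sum_{j\sim i}\omega_{i,j}=1$; after that, the spanning-tree out-degree count and Fubini (independence of the environment across vertices) finish the proof, and there is no real analytic obstacle.
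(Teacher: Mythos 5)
Your proposal is correct and follows essentially the same route as the paper: apply Theorem~\ref{th:supermainth} under the quenched law with the constant rate function $f_{i,j}\equiv\omega_{i,j}$, evaluate the Poissonian quantities $\mathcal P$ and $\mathcal P^*$, collapse the exponentials via $\sum_{j\sim i}\omega_{i,j}=1$, use the spanning-tree out-degree count to extract $\ell_{i_1}\prod_i \ell_i^{k_i-1}$, and finally average over the i.i.d.\ environment to produce $\prod_{i\in V'}\Psi$. The only (minor) point where you are more explicit than the paper is in bookkeeping the no-escape factors $\mathcal P(f_{i,j},0,\ell_i)=e^{-\omega_{i,j}\ell_i}$ for edges $(i,j)$ with $j\notin V'$; the paper silently extends the product from $\vec E'$ to $\vec E$ in its intermediate identity, which amounts to the same thing.
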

\end{mdframed}
Random walks in i.i.d. Dirichlet random environments form a particularly tractable and rich subclass of RWRE models, in which the transition probabilities at each site are independently drawn from a Dirichlet distribution. This setting offers a natural Bayesian framework for modeling random transitions on graphs, and it arises in various contexts, including statistical mechanics. For an accessible introduction, see \cite{enriquez2009dirichlet} and the survey by \cite{sabot2015survey}. In our case the Dirichlet structure allows for explicit calculations of the distribution of local time profile. 
\begin{mdframed}[style=MyFrame2] 
\begin{theorem}\label{th:DRW}Consider a  RWRE  $\X$ defined on a regular graph (possibly infinite) with degree $\Delta$ and  an i.i.d. Dirichlet random environment. More precisely $(\omega(0, j)_{j \colon j \sim 0}$ is a Dirichlet distribution whose density equals $\Gamma(\Delta)$ on the $(\Delta -1)$-simplex. 
Fix a connected subgraph $G' \subset G$, where $G' = (V', E')$ and $0 \in V'$. Fix  $i_1\in V'$ and a local time profile $\ell \in (0,\infty)^{V'}$ with $\sum_i \ell_i = t$ and $k \in \N^{E'}$ such that  $b(k)_i = \delta_{0}(i) - \delta_{i_1}(i)$ for all $i \in V$.  Let $\vec{T}$ be an oriented  spanning tree of $G'$ rooted at $i_1$. Denote by $\vec{\mathcal T_{i_1}}(G')$ the collection of oriented spanning trees of the graph $G'$ rooted at $i_1$.
 One has
\begin{equation}\label{eq:loctim1011}
\begin{aligned}
\mathsf{P} \big(&k(\X, t) = k,\; \ell(\X, t) \in (\ell, \ell+ d\ell), X_t =i_1\big) \\
&= {\rm e}^{-t} \Gamma(\Delta)^{|V'|}  \left(\prod_{i \in V'} \frac{\ell_i^{k_i -1}}{\Gamma(k_{i} +\Delta)}\right) \ell_{i_1} \left(\sum_{\vec{T} \in \mathcal{T}_{i_1}(G')} \prod_{(i,j) \in \vec{T}} k_{i,j}\right) m_t({\rm d}\ell).
\end{aligned}
\end{equation}
\end{theorem}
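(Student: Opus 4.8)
The plan is to read off Theorem~\ref{th:DRW} from Theorem~\ref{th:RWREdirec} by evaluating the moment function $\Psi$ explicitly for a flat Dirichlet environment and then summing the resulting expression over last‑exit trees. First I would compute $\Psi$. Since $(\omega(0,j))_{j\sim 0}$ has the uniform density $\Gamma(\Delta)$, i.e.\ is $\mathrm{Dirichlet}(1,\dots,1)$, on the $(\Delta-1)$-simplex, the quantity $\Psi(n_1,\dots,n_\Delta)=\E[\prod_{j\sim 0}\omega(0,j)^{n_j}]$ is a Dirichlet (Liouville) integral. Applying $\int\prod_j\omega_j^{a_j-1}\,d\omega=\prod_j\Gamma(a_j)/\Gamma(\sum_j a_j)$ over the simplex with $a_j=n_j+1$ gives
$$
\Psi(n_1,\dots,n_\Delta)=\Gamma(\Delta)\,\frac{\prod_{j}n_j!}{\Gamma(\Delta+\sum_j n_j)} .
$$
Evaluated at a vertex $i$ on the outgoing counts $(k_{i,j})_{j\sim i}$, with sum $k_i$, this reads $\Psi((k_{i,j})_{j\sim i})=\Gamma(\Delta)\big(\prod_{j\sim i}k_{i,j}!\big)/\Gamma(\Delta+k_i)$; here regularity of $G$ is what lets us use the same $\Psi$ at every vertex.

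Next I would substitute this into \eqref{eq:loctim} for a fixed oriented spanning tree $\vec T\in\vec{\mathcal T}_{i_1}(G')$. The factor attached to vertex $i$ becomes $\ell_i^{k_i-1}\,\Gamma(\Delta)\,\Gamma(\Delta+k_i)^{-1}\prod_{j\sim i}k_{i,j}!/(k_{i,j}-h_{i,j}(\vec T))!$. Since $h_{i,j}(\vec T)\in\{0,1\}$, and in $\vec T$ every vertex of $V'\setminus\{i_1\}$ has exactly one outgoing edge while $i_1$ has none, the product of these ratios over all of $V'$ collapses to $\prod_{(i,j)\in\vec T}k_{i,j}$. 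Hence
$$
\mathsf P\big(k(\X,t)=k,\ \ell(\X,t)\in(\ell,\ell+d\ell),\ \vec T(\X,t)=\vec T\big)=e^{-t}\,\ell_{i_1}\Big(\prod_{i\in V'}\frac{\ell_i^{k_i-1}\,\Gamma(\Delta)}{\Gamma(\Delta+k_i)}\Big)\Big(\prod_{(i,j)\in\vec T}k_{i,j}\Big)\,m_t(d\ell),
$$
in which the tree enters only through the last factor.

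Finally I would sum over $\vec T$. On the event $\{k(\X,t)=k,\ \ell(\X,t)\in(\ell,\ell+d\ell)\}$ the range of $\X$ is exactly $V'$, and the last‑exit tree $\vec T(\X,t)$ is an oriented spanning tree of $G'$ rooted at the terminal vertex $X_t$; intersecting with $\{X_t=i_1\}$ therefore splits this event into the disjoint union, over $\vec T\in\vec{\mathcal T}_{i_1}(G')$, of its intersections with $\{\vec T(\X,t)=\vec T\}$. Trees carrying an edge $(i,j)$ with $k_{i,j}=0$ contribute nothing, either because they cannot arise as last‑exit trees or because $\prod_{(i,j)\in\vec T}k_{i,j}=0$ for them, so extending the sum to all of $\vec{\mathcal T}_{i_1}(G')$ introduces no error. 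Pulling the $\vec T$‑independent vertex product out of the sum and writing $\prod_{i\in V'}\Gamma(\Delta)=\Gamma(\Delta)^{|V'|}$ produces exactly \eqref{eq:loctim1011}.

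The argument is essentially routine once Theorem~\ref{th:RWREdirec} is in hand; the steps that require care are the collapse of the ratios $k_{i,j}!/(k_{i,j}-h_{i,j}(\vec T))!$ into the single product $\prod_{(i,j)\in\vec T}k_{i,j}$, and the verification that $\{X_t=i_1\}$ decomposes cleanly over last‑exit trees with degenerate trees contributing zero. Neither is conceptually difficult, so the real content here sits in Theorems~\ref{th:supermainth} and~\ref{th:RWREdirec} rather than in this specialization.
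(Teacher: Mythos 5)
Your proposal is correct and follows essentially the same route as the paper: compute $\Psi$ via the Dirichlet integral to get $\Gamma(\Delta)\prod_{j\sim i}\Gamma(k_{i,j}+1)/\Gamma(k_i+\Delta)$, substitute into \eqref{eq:loctim} from Theorem~\ref{th:RWREdirec} so that the factorial ratios $k_{i,j}!/(k_{i,j}-h_{i,j}(\vec T))!$ collapse to $\prod_{(i,j)\in\vec T}k_{i,j}$, and finally sum over $\vec T\in\vec{\mathcal T}_{i_1}(G')$. The only difference is that you spell out the factorial-collapse step and the disjoint-union decomposition over last-exit trees a bit more explicitly than the paper does, but this is detail rather than a genuinely different argument.
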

\end{mdframed}
Note that by the Matrix Tree Theorem the sum in the bracket does not depend on $i_1$, since it can be written as a principal minor of the matrix with off-diagonal coefficient $- k_{i, j}$ and $ \sum_{r} k_{i, r}$ on the diagonal.

\subsubsection{General reinforced random walks on oriented graphs}
We define a continuous time, right-continuous process, called  \lq directed\rq\  edge reinforced random walk (dERRW)  $\X = (X_t)_{t \geq 0}$ started at $0 \in V$, as follows. It takes values in $V$ and jumps to nearest neighbors.   Fix a reinforcement function $f \colon \N \rightarrow (0, \infty)$ and denote by $\vec{\mathbb{P}}^{\ssup f}$ the measure associated to the process.  Recall that  \( k_{i,j}(\X, t) \) denotes the number of times the directed edge \( (i,j) \in \vec{E} \) has been traversed up to time \( t \).
On the event $\{X_t =i\}$, one has
    \[
    \vec{\mathbb{P}}^{\ssup f}(X_{t+h} = j \mid \mathcal{F}_t)  =  f(k_{i,j}(\X, t)) h + o(h),
    \]
    where \( \mathcal{F}_n \) is the sigma-algebra generated by the process up to time \( n \).
Set 
$$\Theta(f, k) := \prod_{(i,j) \in \vec{E}} \prod_{s=1}^{k_{i,j}} f(s) \qquad  \mbox{ and } \qquad
\Lambda(f, n, i) := \prod_{\substack{j = 0 \\ j \ne i}}^{n-1} \frac{1}{f(j) - f(i)}.
$$
\begin{mdframed}[style=MyFrame2]
\begin{theorem}\label{th:firstfo0} Let $X$ be dERRW on G. Fix  a connected subgraph $G' \subset G$, where $G' = (V', E')$ and $0 \in V'$. Fix $i_1 \in V'$ and a local time profile $\ell \in (0,\infty)^{V'}$ with $\sum_i \ell_i = t$ and $k \in \N^{E'}$ such that  $b(k)_i = \delta_{0}(i) - \delta_{i_1}(i)$ for all $i \in V'$.  Finally let $\vec{T}$ be an oriented  spanning tree of $G'$ rooted at $i_1$. One has 
\begin{equation}\label{eq:loctim}
\begin{aligned}
&\vec{\mathbb{P}}^{\ssup f} \big(k(\X, t) = k,\; \ell(\X, t) \in (\ell, \ell+ d\ell),\; \vec{T}(\X, t) = \vec{T} \big) \nonumber \\
&=  \Theta(f, k)  \Big(\prod_{ij \in \vec{E}'}   \sum_{i=1}^{k_{i,j}}\Lambda(f, k_{i,j}+ h_{i,j}(\vec{T}), i) ( {\rm e}^{-f(k_{i, j}) \ell_i} - (1- h_{i,j}(\vec{T})){\rm e}^{-f(k_{i,j}+1) \ell_i }) \Big) m_t(d \ell). 
\end{aligned}
\end{equation}
\end{theorem}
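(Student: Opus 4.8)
\textbf{Proof proposal for Theorem~\ref{th:firstfo0}.}

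The plan is to derive this formula as a direct specialization of the master formula~\eqref{eq:loctim-1} in Theorem~\ref{th:supermainth}. For dERRW the jump rate out of $i$ along $(i,j)$ is $f(k_{i,j}(\X,t))$, which depends only on the directed edge crossing count and not on the local time at $i$; in the notation of Theorem~\ref{th:supermainth} this means $f_{i,j}(m, s) = f(m)$, independent of its second argument. So the first step is to compute the two building blocks $\mathcal{P}(f_{i,j}, m, s)$ and $\mathcal{P}^*(f_{i,j}, m, s)$ for a pure birth (Yule-type) process $(Y_t)$ whose rate of jumping from state $\ell$ to $\ell+1$ is $f(\ell)$, independent of time. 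Writing $S_0, S_1, \ldots$ for the successive holding times, $S_\ell$ is exponential with parameter $f(\ell)$ and the $S_\ell$ are independent. Then $\mathcal{P}(f, m, s) = \P(S_0 + \cdots + S_{m-1} > s)$ and $\mathcal{P}^*(f, m, s)\,ds = \P(S_0 + \cdots + S_{m-2} < s < S_0 + \cdots + S_{m-1})$, i.e. the density at $s$ of the partial sum $S_0 + \cdots + S_{m-2}$ convolved against the rate $f(m-1)$ of the last clock — more precisely $\mathcal{P}^*(f,m,s)$ is $f(m-1)$ times the survival-weighted density. A standard partial-fraction expansion of the Laplace transform $\prod_{j=0}^{m-1} f(j)/(f(j)+\lambda)$ (valid when the $f(j)$ are distinct) gives
\[
\mathcal{P}(f, m, s) = \sum_{i=0}^{m-1} \Big(\prod_{\substack{j=0 \\ j\ne i}}^{m-1} \frac{f(j)}{f(j)-f(i)}\Big) {\rm e}^{-f(i) s},
\]
and a similar expansion for $\mathcal{P}^*$. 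Matching indices with the definitions $\Theta(f,k)=\prod_{(i,j)}\prod_{s=1}^{k_{i,j}} f(s)$ and $\Lambda(f,n,i)=\prod_{j\ne i, \, 0\le j \le n-1} (f(j)-f(i))^{-1}$, one sees that $\mathcal{P}(f, k_{i,j}, \ell_i)$ contributes a factor $\prod_{s} f(s)$ together with $\sum_i \Lambda(f, k_{i,j}, i){\rm e}^{-f(i)\ell_i}$ — and similarly the starred version, corresponding to edges in $\vec{T}$, contributes with $k_{i,j}$ replaced by $k_{i,j}+1$ in the product range (hence the $h_{i,j}(\vec{T})$ shift) and an extra subtracted term ${\rm e}^{-f(k_{i,j}+1)\ell_i}$ reflecting that for a tree edge the relevant event is that the last clock has rung rather than that it has not.

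The second step is bookkeeping: plug these expressions into~\eqref{eq:loctim-1}, factor out the product $\prod_{(i,j)\in\vec{E}'}\prod_{s=1}^{k_{i,j}} f(s) = \Theta(f,k)$ (the tree edges contribute range $1,\ldots,k_{i,j}$ just like the non-tree ones, because the defining event for $\mathcal{P}^*$ still involves clocks $0$ through $k_{i,j}-1$ firing while the $k_{i,j}$-th has rung — one must be careful that the normalization in $\Theta$ uses $k_{i,j}$ and not $k_{i,j}+1$, and check this against the $h_{i,j}(\vec T)$ appearing inside $\Lambda$ and in the survival term). The summation variable is renamed to $i$ running from $1$ to $k_{i,j}$ (a shift by one from the $0$-to-$m-1$ indexing above, which is why $\Lambda$ is defined with the product ranging over $j$ from $0$ to $n-1$). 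Collecting everything yields exactly~\eqref{eq:loctim}. The factor $m_t(d\ell)$, the Lebesgue measure on the simplex $\{\sum_i \ell_i = t\}$, is inherited verbatim from Theorem~\ref{th:supermainth}.

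I expect the main obstacle to be purely a matter of careful index accounting rather than a conceptual difficulty: the interplay between (a) which clocks are ``used up'' on a tree edge versus a non-tree edge, (b) where the $+1$ shift $h_{i,j}(\vec T)$ should land — inside the range of $\Lambda$, inside $\Theta$, or in the exponent — and (c) the off-by-one between the ``number of crossings'' indexing $k_{i,j} \in \N$ and the ``holding time'' indexing $0,1,\ldots$ of the pure birth chain. Getting the partial-fraction constants to line up with $\Lambda$ with the correct signs requires writing $\prod_{j\ne i}(f(j)-f(i))$ rather than $\prod_{j\ne i}(f(i)-f(j))$ and tracking how many sign flips occur; this is the one place where a sloppy computation would produce a wrong answer. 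A secondary technical point is the genericity assumption that the values $f(0), f(1), \ldots, f(k_{i,j})$ are pairwise distinct, needed for the simple partial-fraction form; if the paper wants the formula for general $f$ one takes limits, and $\Lambda$ is understood via the usual confluent (derivative) version of partial fractions, but most likely the statement is intended under the implicit distinctness hypothesis and no limiting argument is needed.
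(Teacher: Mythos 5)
Your overall route is the same as the paper's: specialize the master formula of Theorem~\ref{th:supermainth} to the case $f_{i,j}(m,s)=f(m)$, reduce the building blocks $\mathcal{P}$ and $\mathcal{P}^*$ to explicit computations for a pure-birth chain, and identify the outcome with $\Theta$, $\Lambda$ and the exponential factors. The paper does exactly this, computing the two needed probabilities via Lemma~\ref{le:Pdnk} (density of a sum of independent exponentials with distinct rates, which is the same partial-fraction formula you re-derive from the Laplace transform) and a short follow-up lemma for $\P(S_n\le\ell<S_{n+1})$.

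However, two of your details are genuinely wrong, and they are not merely sign bookkeeping. First, your identification
\[
\mathcal{P}(f,m,s)=\P\bigl(S_0+\cdots+S_{m-1}>s\bigr)
\]
does not match the definition $\mathcal{P}(f,m,s)=\P(Y_s^{\ssup f}=m)$; the correct event is $\{S_0+\cdots+S_{m-1}\le s< S_0+\cdots+S_m\}$, i.e. the difference of two survival probabilities. (Your stated formula would give $\mathcal{P}(f,0,s)=1$, whereas it should be $e^{-f(0)s}$.) Second, and as a consequence, you attach the subtracted term $e^{-f(k_{i,j}+1)\ell_i}$ to the tree edges, which is backwards: the factor $(1-h_{i,j}(\vec T))$ in the theorem statement kills that term exactly when $(i,j)\in\vec T$. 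The subtraction lives on the non-tree edges, because $\mathcal{P}(f,k_{i,j},\ell_i)=\P(Y_{\ell_i}=k_{i,j})$ is the difference computed in the paper's second lemma, while the tree-edge block $\mathcal{P}^*(f,k_{i,j},\ell_i)\,d\ell_i=\P(S_{k_{i,j}}\in(\ell_i,\ell_i+d\ell_i))$ is a plain density and hence a single alternating sum with no extra term — this is precisely Lemma~\ref{le:Pdnk}. Once you correct the definition of $\mathcal{P}$ and swap which building block carries the subtraction, the remaining concerns you raise (the $0$-vs-$1$ indexing offset between the clock labels and the crossing counts, the placement of $h_{i,j}(\vec T)$ inside $\Lambda$, and the distinctness assumption on the values of $f$) are indeed the genuine bookkeeping points, and your treatment of them is reasonable.
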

\end{mdframed}

\subsubsection{Directed once-reinforced random walk} The directed once-reinforced random walk with parameter $a > 0$, abbreviated  dORRW($a$) corresponds to the case $f(1)=a$ and $f(j) = 1$ for all  $j \ge 2$.  We use $\vec{\P}^{\ssup a}$ the probability measure associated to this process.
Let \( b \in \mathcal{I} \) be such that \( b_i = \delta_{i_0}(i) - \delta_{i_1}(i) \) for all \( i \in V \). For each pair of neighbors $\{i, j\}$, choose a unique orientation  such that \( b_{i,j} \ge 0 \). Let $\vec{E}^+$ be the collection of these oriented edges. In particular, \( \vec{E}^+ = \{ (i, j)\in \vec{E} \colon b_{i, j} \ge 0 \} \).
Define 
$$
J_{v, w} (z) = \sum_{k=0}^\infty \frac 1{\Gamma(k+v+1)\Gamma(k+w+1)} \left(\frac z2 \right)^{2k + v+w}.
$$
Note that 
$ J_{v, 0}(z) =  I_v(z), $ where $I_v$ is the well-known modified Bessel function of the first kind.
\vspace{.5cm}
\begin{mdframed}[style=MyFrame2]
\begin{theorem}\label{eq:thD}
Let $(D_{i,j})_{(i, j) \in \vec{E}}$ be collection of i.i.d.  geometric random variables with probability mass function 
$$
\vec{\P}^{\ssup a} (D_{i,j} = n) = a (1-a)^n, \qquad \mbox{for $n \ge 0$}.
$$
Fix  a connected subgraph $G' \subset G$, where $G' = (V', E')$ and $0 \in V'$. Fix $i_1 \in V'$ and a local time profile $\ell \in (0,\infty)^{V'}$ with $\sum_i \ell_i = t$ and $k \in \N^{E'}$ such that  $b(k)_i = \delta_{0}(i) - \delta_{i_1}(i)$ for all $i \in V'$.  Finally let $\vec{T}$ be an oriented  spanning tree of $G'$ rooted at $i_1$. One has 
\begin{equation}\label{eq:loctim0}
\begin{aligned}
&\vec{\mathbb{P}}^{\ssup a} \big(\tilde{b}(\X, \sigma) = \tilde{b},\; \ell(\X, \sigma) \in (\ell, \ell+ d\ell),\; \vec{T}(\X, \sigma) = \vec{T} \big)  \\
&=\frac{a^{|\vec{E}|}}{(1 - a)^{\|k\| -|\vec{T}|}} e^{-a \sum_{i \in V'} deg_i \times  \ell_i } \prod_{(i, j) \in \vec{E}' }   \frac{\gamma(k_{i,j} - h_{i,j}(\vec{T}), (1 - a)\ell_i)}{\Gamma(k_{i,j} -h_{i,j}(\vec{T}))}  m_\sigma(d \ell)\\
&= \E^{\otimes D_{i,j}}\left[{\rm e}^{-  \sum_{i \in V'} {\rm deg}_i \ell_i} \prod_{\{i,j\} \in E'} J_{D_{i,j}+|\widetilde{b}_{i,j}|,  D_{j, i}}(2 \sqrt{\ell_i \ell_j})\left(\prod_{i \in V}  \ell_i^{\frac{{\rm Div} (D)}2 +\widetilde{b}_i}\right)   m_\sigma(d \ell)\right],
\end{aligned}
\end{equation}
where ${\rm deg}_i$ is the degree of $i$ in $G$. Moreover,
\begin{equation}\label{eq:loctim0.01}
\begin{aligned}
&\vec{\mathbb{P}}^{\ssup a} \big(\ell(\X, \sigma) \in (\ell, \ell+ d\ell),\; \vec{T}(\X, \sigma) = \vec{T}\big)  \\
&=  \frac 1{(2 \pi)^{|V|-1}}  \int_{[0, 1]^{V\setminus{i_1}}} \vartheta(x)   \prod_{(s, j) \in \vec{E}'\setminus{\vec{T}}} \Big[ (1-a)({\rm e}^{2\pi i (x_s - x_j)}  -  1) + a {\rm e}^{2\pi i (x_s - x_j)}  e^{\ell_s ({\rm e}^{2\pi i (x_s - x_j)} - (1-a))} \Big]\\
&   \qquad \qquad \qquad \prod_{(s, j) \in \vec{T}} {\rm e}^{2\pi i (x_s - x_j)}  \left[ 2 {\rm e}^{2\pi i (x_s - x_j)}  - (1-a) + {\rm e}^{2\pi i (x_s - x_j)}  {\rm e}^{\ell_s ({\rm e}^{2\pi i (x_s - x_j)}  - (1-a))} \right]\prod_{i=1}^{|V|-1} {\rm d} x_i,
\end{aligned}
\end{equation}
where $x_{i_1} =0$ and 
$$
\vartheta(x) := \prod_{(i,j) \in \vec{E}}\frac{e^{-a\ell_{s}}}{{\rm e}^{2\pi i (x_s - x_j)}  - (1-a)}.
$$
\end{theorem}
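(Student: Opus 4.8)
The plan is to deduce Theorem~\ref{eq:thD} by specialising the general local-time formula of Theorem~\ref{th:supermainth} (equivalently, the dERRW formula of Theorem~\ref{th:firstfo0}) to the two-valued reinforcement of dORRW($a$), evaluating the resulting one-vertex quantities in closed form, and then trading the incomplete Gamma functions that appear for the i.i.d.\ geometric family $(D_{i,j})$. Concretely, dORRW($a$) is the self-interacting walk of Theorem~\ref{th:supermainth} with $f_{i,j}(m,\ell)$ equal to $a$ when $m=0$ and to $1$ when $m\ge 1$, independent of $\ell$; so \eqref{eq:loctim-1} already supplies the joint law of $(k(\X,\sigma),\ell(\X,\sigma),\vec{T}(\X,\sigma))$ for fixed crossing profile $k$, up to the one-vertex factors $\mathcal P(f,m,\ell_i)$ for non-tree directed edges and $\mathcal P^*(f,m,\ell_i)$ for tree directed edges, where the underlying point process $Y^{\ssup f}$ is the pure-birth chain on $\N$ with birth rate $a$ at state $0$ and birth rate $1$ at every state $\ge1$. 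Splitting off the first birth—an $\mathrm{Exp}(a)$ waiting time—from the subsequent rate-one Poisson dynamics gives $\mathcal P(f,0,t)=e^{-at}$ and, for $m\ge1$,
\[
\mathcal P(f,m,t)=\frac{a\,e^{-at}}{(m-1)!}\int_0^t u^{m-1}e^{-(1-a)u}\,\mathrm{d}u=\frac{a\,e^{-at}}{(1-a)^m}\,\frac{\gamma(m,(1-a)t)}{\Gamma(m)},
\]
together with $\mathcal P^*(f,m,t)=f(m-1)\,\mathcal P(f,m-1,t)$. Substituting into \eqref{eq:loctim-1}, writing $m_{i,j}:=k_{i,j}-h_{i,j}(\vec{T})$, collecting the exponential sojourn-clock factors at each vertex into the prefactor $e^{-a\sum_i\mathrm{deg}_i\ell_i}$ (the accounting that underlies Proposition~\ref{pr:changem}), the $a$'s into a power of $a$, and using $h_{i,j}(\vec{T})$ to put tree and non-tree edges on the same footing—the convention $\gamma(0,y)/\Gamma(0)=1$ covering the edges crossed exactly $h_{i,j}(\vec{T})$ times—yields the first equality of \eqref{eq:loctim0}.

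Next I would pass to the geometric representation. Using the confluent series $\gamma(m,y)/\Gamma(m)=y^{m}e^{-y}\sum_{n\ge0}y^{n}/\Gamma(m+n+1)$ with $y=(1-a)\ell_i$ and $(1-a)^{n}=a^{-1}\,\vec{\mathbb{P}}^{\ssup a}(D_{i,j}=n)$, the factor attached to directed edge $(i,j)$ becomes $e^{-(1-a)\ell_i}\,\E\bigl[\ell_i^{\,m_{i,j}+D_{i,j}}/\Gamma(m_{i,j}+D_{i,j}+1)\bigr]$, the exponentials combining with the prefactor into $e^{-\sum_i\mathrm{deg}_i\ell_i}$. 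Multiplying over $\vec{E'}$, pulling out the expectation over the independent $D_{i,j}$'s, and then (to reach \eqref{eq:loctim0} as stated, i.e.\ conditioning on the adjusted current $\widetilde b$ rather than on $k$) summing over the profiles $k$ compatible with $(\widetilde b,\vec{T})$—on each undirected edge $\{i,j\}$ exactly one of $k_{i,j},k_{j,i}$ is free, and the sum of the product of the factors of the two directions over that free integer is precisely the series defining $J_{v,w}(2\sqrt{\ell_i\ell_j})$ with $v=D_{i,j}+|\widetilde b_{i,j}|$, $w=D_{j,i}$ (orienting $\{i,j\}$ so $\widetilde b_{i,j}\ge0$)—gives the second equality of \eqref{eq:loctim0}, once the residual powers of $\ell_i$, via $b(k)_i=\delta_0(i)-\delta_{i_1}(i)$ and the spanning-tree out-degree identity $\sum_j h_{i,j}(\vec{T})=\mathbf 1_{\{i\ne i_1\}}$, are assembled into $\prod_i\ell_i^{\mathrm{Div}(D)/2+\widetilde b_i}$.

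For the Fourier identity \eqref{eq:loctim0.01} I would run the same computation "in Fourier", since it is \eqref{eq:loctim-1.01} specialised to dORRW: it suffices to evaluate the transforms $\hat{\mathcal P}(f,\ell,z)$ and $\hat{\mathcal P}^*(f,\ell,z)$. The generating function $g(s,t):=\E[s^{Y^{\ssup f}_t}]$ solves the linear ODE $\partial_t g=(s-1)(g-(1-a)e^{-at})$ with $g(s,0)=1$, whence $g(s,t)=\tfrac{as}{s-1+a}e^{(s-1)t}+\tfrac{(s-1)(1-a)}{s-1+a}e^{-at}$, so that $\hat{\mathcal P}(f,\ell,z)=g(e^{2\pi iz},\ell)$ and $\hat{\mathcal P}^*(f,\ell,z)=e^{2\pi iz}g(e^{2\pi iz},\ell)-(1-a)e^{2\pi iz}e^{-a\ell}$. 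Substituting into \eqref{eq:loctim-1.01} and factoring the common rational factor $e^{-a\ell_s}/(e^{2\pi i(x_s-x_j)}-(1-a))$ out of every edge—its product over the directed edges being exactly $\vartheta(x)$—leaves the non-tree and tree integrands of \eqref{eq:loctim0.01}; for instance, for a non-tree edge the remainder is $(1-a)(e^{2\pi i(x_s-x_j)}-1)+a e^{2\pi i(x_s-x_j)}e^{\ell_s(e^{2\pi i(x_s-x_j)}-(1-a))}$.

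The genuine work—and the main obstacle—sits in the two middle steps. If instead one argues straight from Theorem~\ref{th:firstfo0}, the dORRW reinforcement $f$ is not injective, so the weights $\Lambda(f,\cdot,\cdot)$ are not literally defined; one must then read the inner sum $\sum_r\Lambda(f,\cdot,r)(\cdots)$ as a confluent divided difference of $x\mapsto e^{-\ell_i x}$ with a simple node at $a$ and a node of high multiplicity at $1$—obtained as the $\varepsilon\downarrow0$ limit of an injective perturbation $f_\varepsilon$—and it is this confluence that manufactures the incomplete Gammas and, after pairing the two orientations of each edge, the Bessel-type kernels $J_{v,w}$. Routing through Theorem~\ref{th:supermainth} avoids the limit but not the bookkeeping: one must track the $h_{i,j}(\vec{T})$-shifts, the divergence constraints on $k$ and on $D$, the powers of $a$ and of $1-a$, and the half-integer powers of $\ell_i$, so that the product over \emph{directed} edges collapses to the symmetric product $\prod_{\{i,j\}}J_{v,w}(2\sqrt{\ell_i\ell_j})$ over \emph{undirected} edges times $\prod_i\ell_i^{\mathrm{Div}(D)/2+\widetilde b_i}$ and the rational prefactors fuse into $\vartheta$—this is where essentially all the care is needed.
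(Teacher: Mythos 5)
Your proposal takes essentially the same route as the paper: the per-edge quantities $\mathcal P(f,m,\ell_i)$ and $\mathcal P^*(f,m,\ell_i)$ for the pure-birth chain of dORRW are computed by splitting off the initial $\mathrm{Exp}(a)$ clock from the subsequent rate-one Poisson dynamics (this is exactly the content of the paper's Lemma~\ref{le:poiss1} and Lemma~\ref{le:firstfo}), the incomplete Gammas are traded for the i.i.d.\ geometric family via the same confluent series, and the second equality of \eqref{eq:loctim0} is obtained by the same summation over $k$ compatible with $\tilde b$, pairing the two orientations of each undirected edge to produce the $J_{v,w}$ kernels. Your remark that Theorem~\ref{th:firstfo0} cannot be applied literally because dORRW's $f$ is not injective is well taken and is precisely why the paper routes through Lemma~\ref{le:poiss1} rather than the $\Lambda$-weight formula; and your generating-function derivation of $\hat{\mathcal P}$, $\hat{\mathcal P}^*$ for the Fourier identity \eqref{eq:loctim0.01} supplies a computation the paper leaves implicit as a specialisation of \eqref{eq:loctim-1.01}.
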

\end{mdframed}
\vspace{.4cm}
Large deviations theory provides a framework for quantifying the probabilities of rare events and atypical fluctuations in stochastic processes. For a simple random walk on a finite graph \( G = (V, E) \), the Large Deviations Principle (LDP) characterizes the exponential decay rate of the probability that the empirical measure or empirical flow of the walk deviates from its typical  behavior. Define the so-called Donsker--Varadhan rate function
\begin{equation}\label{eq:ldp}
{\rm Dir}(x) := \begin{cases}  \sum_{i \in V} \sum_{j \sim i} (\sqrt{x_i} - \sqrt{x_j})^2\qquad \mbox{ if $x \in (0, 1)^V$}\\
\infty \qquad \mbox{otherwise}.
\end{cases}
\end{equation}
 the empirical measure of the simple random  walk satisfies, for all Borel set $A$ of the $|V|-1$-simplex,  
\begin{equation}
\begin{aligned}
\liminf_{t \to \infty} \frac 1t \log \P(\ell(\X, t) \in A) \ge - \inf_{\ell \in A^o} {\rm Dir}(\ell) \\
\limsup_{t \to \infty} \frac 1t \log \P(\ell(\X, t) \in A) \le - \inf_{\ell \in \overline{A}} {\rm Dir}(\ell).
\end{aligned}
\end{equation}

\begin{mdframed}[style=MyFrame2]
\begin{theorem}\label{eq:LDP}
Fix $a \in (0, 1]$. Let $G = (V, \vec{E})$ be a finite graph and fix $a>0$.  Let $A$ be any measurable set of the $|V|-1$-simplex with $A \subset (0, \infty)^V$. One has
\begin{equation}
\begin{aligned}
&\liminf_{t \to \infty} \frac 1t \log \vec{\P}^{\ssup a}(\ell(\X, t) \in A) \ge - \inf_{\ell \in A^o} {\rm Dir}(\ell)\\
&\limsup_{t \to \infty} \frac 1t \log \vec{\P}^{\ssup a}(\ell(\X, t) \in A) \le (1-a) \sup_{\ell \in \overline{A}}\sum_{i} {\rm deg}_i \frac{\ell_i}t
- \inf_{\ell \in \overline{A}} {\rm Dir}(\ell)
\end{aligned}
\end{equation}
\end{theorem}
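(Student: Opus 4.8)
The plan is to deduce both inequalities from the Donsker--Varadhan large deviation principle for the empirical measure of continuous-time simple random walk recalled just above, by means of a change-of-measure formula for $\mathrm{dORRW}(a)$ with respect to $\P$ that plays, in the oriented setting, the role of Proposition~\ref{pr:changem}. For $(i,j)\in\vec E$ write $\mathbb L_{i,j}$ for the first time the oriented edge $(i,j)$ is traversed, let $\vec{\mathfrak C}_t$ be the set of oriented edges traversed by time $t$, and set $T_{i,j}(t):=\int_0^{\mathbb L_{i,j}\wedge t}\1_{\{X_u=i\}}\,\mathrm du$. First I would prove that for every $t\ge 0$ and every $A\in\mathcal F_t$,
\[
\vec{\P}^{\ssup a}(A)=\E\!\left[\exp\!\left\{(1-a)\sum_{(i,j)\in\vec E}T_{i,j}(t)+(\log a)\,|\vec{\mathfrak C}_t|\right\}\1_A\right].
\]
This is the oriented analogue of Proposition~\ref{pr:changem}, obtained by the same computation: under $\P$, $\mathrm{dORRW}(a)$ is a pure-jump process whose instantaneous rate along $(i,j)$ equals $a$ before $\mathbb L_{i,j}$ and $1$ afterwards, so its likelihood ratio on $\mathcal F_t$ is the product of the jump-rate ratios --- which contributes a factor $a$ exactly once per element of $\vec{\mathfrak C}_t$, i.e.\ $a^{|\vec{\mathfrak C}_t|}$ --- times $\exp\{\int_0^t(\Lambda^\P_s-\Lambda^{\vec\P^{\ssup a}}_s)\,\mathrm ds\}$, and at a vertex $i$ the rate defect $\Lambda^\P-\Lambda^{\vec\P^{\ssup a}}$ equals $(1-a)$ times the number of not-yet-traversed out-edges of $i$; integrating and summing over $i$ produces $(1-a)\sum_{(i,j)}T_{i,j}(t)$. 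As in Proposition~\ref{pr:changem} one establishes this first on $\mathcal F_{t\wedge\tau_N}$ and then lets $N\to\infty$, using that on a finite graph the walk makes finitely many jumps in finite time; alternatively it can be read off as a degenerate case of Theorem~\ref{th:firstfo0}.

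For the \emph{lower bound}: since $a\in(0,1]$ we have $1-a\ge 0$ and $\log a\le 0$, while $\sum_{(i,j)}T_{i,j}(t)\ge 0$ and $|\vec{\mathfrak C}_t|\le|\vec E|$, so the density above is bounded below by the strictly positive constant $a^{|\vec E|}$ (finiteness of $G$ is used here). Applying the formula to the event $\{\ell(\X,t)\in A\}\in\mathcal F_t$ gives $\vec\P^{\ssup a}(\ell(\X,t)\in A)\ge a^{|\vec E|}\,\P(\ell(\X,t)\in A)$, and hence
\[
\liminf_{t\to\infty}\tfrac1t\log\vec\P^{\ssup a}(\ell(\X,t)\in A)\ \ge\ \liminf_{t\to\infty}\tfrac1t\log\P(\ell(\X,t)\in A)\ \ge\ -\inf_{\ell\in A^o}\mathrm{Dir}(\ell),
\]
the last step being the stated simple-random-walk lower bound.

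For the \emph{upper bound}: for each $i$ and each out-edge $(i,j)$ one has $T_{i,j}(t)\le\ell_i(\X,t)$, whence $\sum_{(i,j)\in\vec E}T_{i,j}(t)\le\sum_{i\in V}\mathrm{deg}_i\,\ell_i(\X,t)$; combined with $(\log a)|\vec{\mathfrak C}_t|\le 0$, this shows that on the event $\{\ell(\X,t)\in A\}$ the density above is at most $\exp\{(1-a)\sum_i\mathrm{deg}_i\,\ell_i(\X,t)\}\le\exp\{(1-a)\,t\,\sup_{\ell\in\overline A}\sum_i\mathrm{deg}_i\,\ell_i/t\}$. Inserting this into the change-of-measure formula yields $\vec\P^{\ssup a}(\ell(\X,t)\in A)\le\exp\{(1-a)\,t\,\sup_{\ell\in\overline A}\sum_i\mathrm{deg}_i\,\ell_i/t\}\,\P(\ell(\X,t)\in A)$, and taking $\tfrac1t\log$, letting $t\to\infty$, and invoking the stated simple-random-walk upper bound $\limsup_t\tfrac1t\log\P(\ell(\X,t)\in A)\le-\inf_{\ell\in\overline A}\mathrm{Dir}(\ell)$ gives the asserted inequality. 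For $a=1$ both bounds collapse to the Donsker--Varadhan principle itself, as they should.

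The essential content sits entirely in the change-of-measure identity of the first step; the two large-deviation bounds are then soft pointwise comparisons of exponents against $\P$, so I do not expect a serious obstacle there. The point requiring genuine care will be establishing that identity at a deterministic time rather than at $\tau_n$ (handled by the martingale/truncation argument above) and matching the normalisation of the $(|V|-1)$-simplex so that the boundary term appears exactly as in the statement. I also expect to remark that the bound $\sum_{(i,j)}T_{i,j}(t)\le\sum_i\mathrm{deg}_i\,\ell_i(t)$ is lossy --- $T_{i,j}$ freezes once $(i,j)$ has been traversed, so one could instead use the local time at $i$ accumulated before all out-edges of $i$ have been used --- but any such sharpening affects only the constant in front of $t$, not the exponential rate, and so is not worth pursuing here.
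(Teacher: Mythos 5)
Your proposal is correct, and it takes a genuinely different route from the paper. You prove the oriented analogue of Proposition~\ref{pr:changem} (a Girsanov-type change-of-measure for dORRW with respect to SRW, with density $\exp\{(1-a)\sum_{(i,j)}T_{i,j}(t)+(\log a)|\vec{\mathfrak C}_t|\}$), then obtain both LDP bounds by soft pointwise control of that density and a reduction to the Donsker--Varadhan statement for SRW: for the lower bound the density is $\ge a^{|\vec E|}$, and for the upper bound $T_{i,j}(t)\le\ell_i(\X,t)$ together with $\log a\le 0$ gives density $\le\exp\{(1-a)\sum_i\deg_i\ell_i(\X,t)\}$. The paper instead derives both bounds from the explicit Bessel/geometric representation: the lower bound in the text is extracted from Theorem~\ref{eq:thD} by restricting to the event $\{D_{i,j}=0\ \forall (i,j)\}$ (probability $a^{|\vec E|}$) and using the asymptotics $\log J_{0,v}(z)= z+O(\log z)$, which reconstructs the Dirichlet form directly from $-\sum_i\deg_i\ell_i+2\sum_{(i,j)\in\vec E^+}\sqrt{\ell_i\ell_j}$ rather than invoking the SRW LDP; the upper bound is read off from Theorem~\ref{th:thDo}, whose conclusion $\vec\P^{\ssup a}(\cdot)\le a^{|V'|}e^{(1-a)\sum_i\deg_i\ell_i}\,\P(\cdot)$ is exactly the pointwise comparison you derive (with the immaterial improvement $a^{|V'|}\le1$). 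Your route is lighter --- it sidesteps the whole Bessel machinery and works as soon as one trusts the Girsanov formula, which is the same computation as Proposition~\ref{pr:changem} in the oriented setting --- whereas the paper's route reuses machinery already built for Theorem~\ref{eq:thD}, which is its natural motivation. One small caveat on your closing remark: the crude bound $T_{i,j}(t)\le\ell_i(\X,t)$ is not merely lossy in a constant multiplying $t$; on the finite recurrent graph $T_{i,j}(t)$ is $O(1)$ on typical trajectories, so a sharper analysis would remove the $(1-a)\sup_{\overline A}\sum_i\deg_i\ell_i$ term from the exponential rate altogether, not just shave its prefactor. That does not affect the correctness of your proof of the stated bound, since you only need the worst-case inequality.
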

\end{mdframed}

Consider now dORRW $\X$ on the directed graph with vertex set $\Z^d = (V_d, \vec{E}_d)$, with $d \ge 2$, where $(x, y) \in \vec{E}_d$ if and only if $x$ and $y$ differ by one coordinate and the magnitude of the difference is one.
Define $\vec{\mathfrak{G}}_t := (\vec{\mathfrak{R}}_t,  \vec{\mathfrak{C}}_t)$, where $\vec{\mathfrak{R}}_t = \{v \in \Z^d \colon \exists u \in [0, t] \mbox{ such that } X_u = v\}$, while 
$$
\vec{\mathfrak{C}}_t := \{(i, j) \in \Vec{E}_d \colon  \exists u \in [0, t] \mbox{ such that } X_{u^-} = i \mbox{ and } X_u = j\}.
$$
Our next result shows that the shape theorem cannot hold for  ORRW on  the oriented grid $\Z^d$ unless the boundary of the range evolves in a very irregular way. we consider ORRW $\X$ on oriented  $\Z^d$, with $d \ge 2$. In particular,  the first time an oriented edge $(x,y)$ is traversed, its weight is reinforced, while $(y,x)$ is not, unless already traversed in the past. Set $\vec{\mathfrak{G}}_t = (\mathfrak{R}_{t}, \vec{ \mathfrak{C}}_{t})$ where  $\mathfrak{R}_{t}$ and $\vec{ \mathfrak{C}}_{t}$ are the vertex range and the (oriented) edge range, respectively, of $\X$ by time $t$.  Denote by $(\tau_i)_i$ the jump times of this process.~Fix a sequence $u \colon \N \rightarrow (0, \infty)$ such that $u_N= o(N^{\theta})$, for some $\theta \in (0, 1/d)$.   Let $e_1$ be the unit vector $(1, 0, \ldots, 0) \in \Z^d$. Let $u_N = o(N^{\theta})$ for some $\theta<1/d$.
\begin{mdframed}[style=MyFrame2]
\begin{theorem}\label{th:main4} Consider dORRW $X$ on $\Z^d$. Fix a sequence $u \colon \N \rightarrow (0, \infty)$, such that $u_N = o(N^{1/d})$. Let 
 $ \mathcal{A}_N := \{{\rm Ball}(0, (1-\eps)u_N) \subset \vec{\mathfrak{G}}_{\tau_N} \subset {\rm Ball}(0, (1+\eps)u_N)\},$ where these are graph inclusions. 
Define 
\begin{eqnarray*}
\mathcal{A} &:=& \bigcup_{m>0} \bigcap_{n > m} \mathcal{A}_n\\
\mathcal{B} &:=& \Big\{\lim_{N \ti} N^{-\frac 12} \sum_{v \in \partial \mathfrak{R}_{\tau_N}} \1_{(v, v - e_1) \notin \vec{ \mathfrak{C}}_{\tau_N}} -\1_{(v, v + e_1) \notin \vec{ \mathfrak{C}}_{\tau_N} } =0\Big\}
\end{eqnarray*}
One has $  \P^{\ssup a}\big(\mathcal{A} \cap \mathcal{B}\big) =0.$
\end{theorem}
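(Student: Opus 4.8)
\noindent\textit{Proof strategy.} Write $\vec{\P}^{\ssup a}$ for the law of dORRW($a$) on $\Z^d$, put $\1^{\pm}_u:=\1_{\{(X_u,X_u\pm e_1)\in\vec{\mathfrak C}_u\}}$, and set $A_t:=\#\{v:(v,v+e_1)\in\vec{\mathfrak C}_t\}$, $B_t:=\#\{v:(v,v-e_1)\in\vec{\mathfrak C}_t\}$, so that $A_t-B_t=\sum_{v\in\mathfrak R_t}(\1_{\{(v,v+e_1)\in\vec{\mathfrak C}_t\}}-\1_{\{(v,v-e_1)\in\vec{\mathfrak C}_t\}})$; also write $Y_N:=N^{-1/2}\sum_{v\in\partial\mathfrak R_{\tau_N}}(\1_{(v,v-e_1)\notin\vec{\mathfrak C}_{\tau_N}}-\1_{(v,v+e_1)\notin\vec{\mathfrak C}_{\tau_N}})$, so $\mathcal B=\{Y_N\to0\}$. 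The plan has two parts: first, to exhibit two $\vec{\P}^{\ssup a}$-martingales built from the $e_1$-displacement of $\X$ and from $A_t-B_t$ whose combination forces $A_{\tau_N}-B_{\tau_N}$ to fluctuate on scale $\sqrt N$ along $\mathcal A$; second, to use the directed polymer representation and Donsker--Varadhan confinement bounds to show that, on $\mathcal A$, $A_{\tau_N}-B_{\tau_N}$ differs from $\sqrt N\,Y_N$ by $o(\sqrt N)$. Together these give $Y_N\not\to0$ $\vec{\P}^{\ssup a}$-a.s.\ on $\mathcal A$, i.e.\ $\vec{\P}^{\ssup a}(\mathcal A\cap\mathcal B)=0$.

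For the first part I would compute the $e_1$-drifts of the dORRW dynamics directly: the "weight one" part of the rates cancels, so $M^{(1)}_t:=X_t\cdot e_1-(1-a)\int_0^t(\1^+_u-\1^-_u)\,du$ and $M^{(2)}_t:=(A_t-B_t)+a\int_0^t(\1^+_u-\1^-_u)\,du$ are $\vec{\P}^{\ssup a}$-martingales. Eliminating the common integral gives $A_{\tau_N}-B_{\tau_N}=M''_{\tau_N}-\tfrac{a}{1-a}\,X_{\tau_N}\cdot e_1$ with $M'':=\tfrac{a}{1-a}M^{(1)}+M^{(2)}$ a martingale. On $\mathcal A_N$ one has $|X_{\tau_N}\cdot e_1|\le(1+\eps)u_N=o(\sqrt N)$, $A_{\tau_N}+B_{\tau_N}\asymp u_N^d=o(N)$ so $\langle M^{(2)}\rangle_{\tau_N}=o(N)$, while the walk spends a fraction $1-O(\eps)$ of its time at vertices all of whose incident edges are already traversed (where it moves symmetrically), so the number of $e_1$-jumps up to $\tau_N$ --- which is $\langle M^{(1)}\rangle_{\tau_N}$ --- is $\asymp N$; hence $\langle M''\rangle_{\tau_N}\asymp N$ on $\mathcal A$. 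The law of the iterated logarithm for martingales then yields $\limsup_N|M''_{\tau_N}|/\sqrt N>0$, hence $(A_{\tau_N}-B_{\tau_N})/\sqrt N\not\to0$, $\vec{\P}^{\ssup a}$-a.s.\ on $\mathcal A$.

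For the second part, the graph inclusion $\mathrm{Ball}(0,(1-\eps)u_N)\subset\vec{\mathfrak G}_{\tau_N}$ implies that every oriented edge incident to a vertex within distance $(1-\eps)u_N-1$ of the origin lies in $\vec{\mathfrak C}_{\tau_N}$, so $A_{\tau_N}-B_{\tau_N}-\sqrt N\,Y_N=\sum_v(\1_{(v,v+e_1)\in\vec{\mathfrak C}_{\tau_N}}-\1_{(v,v-e_1)\in\vec{\mathfrak C}_{\tau_N}})$, the sum running over the vertices of the thin annular shell $\mathfrak R_{\tau_N}\setminus\partial\mathfrak R_{\tau_N}$ near $\{\|v\|\approx u_N\}$ that still carry an untraversed incident edge --- equivalently, those visited by $\X$ only $O(1)$ times (an often-visited vertex has all $2d$ incident edges traversed, by an elementary coupon-collector estimate for the reinforced dynamics). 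Applying the directed analogue of Proposition~\ref{pr:changem}, namely $\vec{\P}^{\ssup a}(\cdot)=\E\big[\exp\{(1-a)\sum_{\vec e}T_{\vec e}(\tau_N)+(\log a)|\vec{\mathfrak C}_{\tau_N}|\}\1_{\cdot}\big]$ with $\P$ simple random walk, together with the Donsker--Varadhan estimate already used for Theorem~\ref{th:main}, one would show that on $\mathcal A_N$ the occupation profile of $\X$ is within a negligible error the principal Dirichlet eigenfunction of the ball, whence that shell contains $\lesssim u_N^{2d}/N$ vertices; and since the reflection $x\mapsto x-2(x\cdot e_1)e_1$ leaves $\vec{\P}^{\ssup a}$ invariant while reversing the sign of every term of the above sum, one would bound that signed, conditionally near-independent sum by $\lesssim(u_N^{2d}/N)^{1/2}=u_N^d/\sqrt N=o(\sqrt N)$. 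Combining with the first part gives $Y_N\not\to0$ $\vec{\P}^{\ssup a}$-a.s.\ on $\mathcal A$, which is the assertion.

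The main obstacle is the second part: estimating the "interior" correction $A_{\tau_N}-B_{\tau_N}-\sqrt N\,Y_N$ requires both a quantitative description of how evenly the confined reinforced walk visits the annular shell of its range --- which is exactly where the polymer representation makes simple-random-walk techniques (Donsker--Varadhan, local limit estimates) available --- and a fluctuation bound that exploits the cancellation forced by the $e_1$-reflection symmetry. This is the precise sense in which a bona fide shape theorem for dORRW on $\Z^d$ would compel the range's boundary to evolve very irregularly.
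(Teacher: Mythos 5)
Your proposal takes a genuinely different route from the paper's, and the first half of it is correct and arguably cleaner. The martingale identities check out: for dORRW($a$) the rate to traverse a fresh directed edge is $a$ and a used one is $1$, so from $X_t$ the $e_1$-drift of $X_t\cdot e_1$ is $(a + (1-a)\1^+_t) - (a + (1-a)\1^-_t) = (1-a)(\1^+_t - \1^-_t)$, giving $M^{(1)}$; and $A_t - B_t$ jumps $+1$ (resp.\ $-1$) at rate $a(1-\1^+_t)$ (resp.\ $a(1-\1^-_t)$), giving $M^{(2)}$. The cancellation $A_{\tau_N} - B_{\tau_N} = M''_{\tau_N} - \tfrac{a}{1-a}X_{\tau_N}\cdot e_1$ is then algebra, and on $\mathcal{A}$ one has $\langle M''\rangle_{\tau_N}\asymp N$ (total jump rate lies in $[2da,2d]$ so $\tau_N\asymp N$, while $[M^{(2)}]_{\tau_N}\le A_{\tau_N}+B_{\tau_N}\lesssim u_N^d = o(N)$), so the martingale LIL gives $\limsup_N |A_{\tau_N}-B_{\tau_N}|/\sqrt{N}>0$ a.s.\ on $\mathcal{A}$. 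The paper reaches the same ``$\sqrt N$-fluctuation'' conclusion by a much heavier apparatus: a strong Poisson-clock construction, a coupling with restricted walks $\widetilde{\X}^{(N)}$ on balls ${\rm Ball}(0,2u_N)$, and a CLT for the embedded one-dimensional SRW $S_{\tau_N}=\langle X_{\tau_N},e_1\rangle-\mathfrak Q_{N,N}$.

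The gap is in your second half. First, an algebraic simplification you are not using: $A_{\tau_N}-B_{\tau_N}=\mathcal U^+_N-\mathcal U^-_N$ \emph{exactly}, where $\mathcal U^\pm_N$ counts vertices with exactly one $e_1$-edge traversed and that one being $(v,v\pm e_1)$; the vertices with both or neither $e_1$-edge traversed cancel in $A-B$. Each such vertex also contributes $\pm1$ to the sum defining $\sqrt N\,Y_N$. Thus $A_{\tau_N}-B_{\tau_N}-\sqrt N\,Y_N$ is not the full ``thin annular shell'' but only the contribution of vertices with exactly one $e_1$-edge traversed that happen \emph{not} to lie in $\partial\mathfrak R_{\tau_N}$. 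If $\partial\mathfrak R_{\tau_N}$ is read as ``visited vertices carrying some untraversed incident edge'' (the reading the paper uses, asserting $\mathcal U^+_N-\mathcal U^-_N=\sum_{v\in\partial\mathfrak R_{\tau_N}}(\cdots)$ as immediate), this difference is identically zero and your Part 2 is vacuous. If $\partial\mathfrak R_{\tau_N}$ is the inner graph boundary, then the correction is a signed sum of strongly dependent indicator differences over a shell of size $O(u_N^d)$; the reflection $e_1\mapsto -e_1$ only centers it and gives no variance control, the appendix's Theorem~\ref{th:DonVar} is only a Laplace-transform asymptotic for $|\mathfrak R_t|$ and does not give the occupation-profile statement you invoke, and ``conditionally near-independent'' is not something the dynamics hand you. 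The paper sidesteps exactly this by building, from the independent Poisson clocks, the i.i.d.\ centered quantities $\mathfrak Q_\infty(x)$ and the geometric corrections of Lemma~\ref{le:bara1}, which it then controls with L\'evy's maximal inequality and a moderate-deviations bound. So as written, Part~1 is a clean alternative to the paper's embedded-SRW step, but Part~2 needs either the trivial reading of $\partial\mathfrak R_{\tau_N}$ or a concrete substitute for the paper's $\mathfrak Q$-machinery; the reflection/Donsker--Varadhan sketch does not supply one.
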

\end{mdframed}


\section{Once-Reinforced random walks}
Our first task is to introduce a new change of measure formula.
 \begin{proof}[Proof of Proposition~\ref{pr:changem}]
For any $N \in \N$ let $[N] := \{1, 2, . . . ,N\}$. Fix an edge path $\mathbf{e}_n = (e_1, e_2, \ldots e_{n})$, such that each pair of consecutive edges in the path share an endpoint, consistently with a vertex path. Denote by $m$ the number of distinct edges in  $\mathbf{e}_n$, and by $(0, x_1, x_2, \ldots, x_n)$ the vector of endpoints of the 
edge path, such that $x_i$ is a neighbour of $x_{i-1}$ for all $i \in [n]$. For all $k \in \{0, 1,2 \ldots n-1\}$, let $\mathfrak{b}_k$ be the number of edges incident to $x_k$ that are not listed in $(e_1, e_2, \ldots, e_{k-1})$, i.e. have not been traversed before time $k$. Set $\mathfrak{z}_k =1$  if $e_k$  coincides with one of the coordinates of $(e_1, e_2, \ldots, e_{k-1})$, and  $\mathfrak{z}_k =0$ otherwise. Let $t_0 < t_1 <\ldots< t_{n}$. Notice that given that the process is at $x_{k}$ at time $t_{k}$, and given that the edge traversed  in the past are $(e_1, e_2, \ldots, e_{k-1})$, the likelihood  that the first jump happens in the interval  $(t_{k+1}, t_k + {\rm d}t_{k+1})$ is 
$$
(a \mathfrak{b}_k  + (2d - \mathfrak{b}_k) ) \exp\{-( a \mathfrak{b}_k  + (2d - \mathfrak{b}_k) ) (t_k - t_{k-1})\} {\rm d}t_{k+1}.
$$
Moreover,  the conditional probability that the next jump is towards  $x_{k+1}$  is
$
\frac {a(1-\mathfrak{z}_k) +  \mathfrak{z}_k}{a \mathfrak{b}_k  + (2d - \mathfrak{b}_k)}.
$ Let $B:= \bigcap_{k=1}^n\{ X_{\tau_k} =x_k,  \tau_k \in (t_k, t_k + {\rm d}t_k)\}$.
 Hence,
\begin{equation}
\begin{aligned}\label{eq:pivotaldecomp}
\P^{\ssup a}\left(B\right)&= \prod_{k=1}^{n} \frac {a(1-\mathfrak{z}_k) +  \mathfrak{z}_k}{a \mathfrak{b}_k  + (2d - \mathfrak{b}_k)} (a \mathfrak{b}_k  + (2d - \mathfrak{b}_k) ) \exp\{-( a \mathfrak{b}_k  + (2d - \mathfrak{b}_k) ) (t_k - t_{k-1})\}\prod_{k=1}^n {\rm d}t_k\\
 &= a^m \exp\left\{(1-a)\sum_{k=1}^{n} \mathfrak{b}_k (t_k - t_{k-1})\right\} \exp\{ -2d t_{n} \}\prod_{k=1}^n {\rm d}t_k.
 \end{aligned}
\end{equation}
It is easy to recognise that the last factor in the right-hand side of \eqref{eq:pivotaldecomp} coincides with the simple random walk measure. Hence 
$$
\P^{\ssup a}\left(B\right)  = a^m \exp\left\{(1-a)\sum_{k=1}^{n} \mathfrak{b}_k (t_k - t_{k-1})\right\} \P\left(B\right).
 $$
\end{proof}
Set  $H_n := \inf\{t \ge 0: |\mathfrak{C}_t| = n\}$, and set 
$\Delta_n :=
\sum_{
e\in E}
\big(T_e(H_{n+1}) - T_e(H_n)\big).$
 In this section
we prove that $(\Delta_n)_n$ are  i.i.d. exponential($a$), under $\P^{\ssup a}$.
\begin{mdframed}
[style=MyFrame1]
\begin{proposition}\label{pr:changem1}
$
\P(\Delta_n > t\;|\; \mathcal{F}_{H_n}) = {\rm e}^{-t}.
$
\end{proposition}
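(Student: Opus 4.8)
The plan is to reduce the statement to an elementary one‑vertex Markov computation. First I would rewrite $\Delta_n$ as a single time integral. On the interval $[H_n,H_{n+1})$ the edge range is frozen, $\mathfrak{C}_u=\mathfrak{C}_{H_n}$, since $H_{n+1}$ is by definition the next instant at which a fresh edge is used (and $|\mathfrak{C}|$ increases by exactly one there). Hence for $e\in\mathfrak{C}_{H_n}$ one has $\mathbb{L}_e\le H_n$, so $T_e(H_{n+1})-T_e(H_n)=0$; and for $e\notin\mathfrak{C}_{H_n}$ one has $\mathbb{L}_e>H_n$ while $\mathbb{L}_e\ge H_{n+1}$ (no fresh edge is used before $H_{n+1}$), so $\mathbb{L}_e\wedge H_n=H_n$, $\mathbb{L}_e\wedge H_{n+1}=H_{n+1}$, and $T_e(H_{n+1})-T_e(H_n)=\int_{H_n}^{H_{n+1}}\mathbf{1}_{X_u\sim e}\,du$. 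Summing over $e$ gives
$$
\Delta_n=\int_{H_n}^{H_{n+1}}\Big(\sum_{e\notin\mathfrak{C}_{H_n}}\mathbf{1}_{X_u\sim e}\Big)du=\int_{H_n}^{H_{n+1}}\mathfrak{b}_u\,du,
$$
where $\mathfrak{b}_u$ is the number of not‑yet‑traversed edges incident to $X_u$ — precisely the quantity called $\mathfrak{b}_k$ in the proof of Proposition~\ref{pr:changem}. Reconciling the truncated per‑edge clocks $T_e$ with this per‑vertex count is the step I expect to require the most care; everything after it is routine.

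Next I would pass to a Laplace transform. Working under $\P$ and conditioning on $\mathcal{F}_{H_n}$, the strong Markov property makes $(X_{H_n+t})_{t\ge0}$ a continuous‑time simple random walk started at $X_{H_n}$, and $\mathfrak{C}_{H_n}$ — hence the finite set $W$ of its endpoints and the numbers $\mathfrak{b}(y):=\deg y-\deg_{\mathfrak{C}_{H_n}}y$ for $y\in W$ — is $\mathcal{F}_{H_n}$‑measurable. It suffices to show $\E\!\left[e^{-\theta\Delta_n}\mid\mathcal{F}_{H_n}\right]=(1+\theta)^{-1}$ for every $\theta>0$. Set $\phi(y):=\E\!\left[e^{-\theta\Delta_n}\mid\mathcal{F}_{H_n},\,X_{H_n}=y\right]$ and condition on the first jump out of $y$: the holding time $S$ is $\mathrm{Exp}(\deg y)$, contributing a factor $\E[e^{-\theta\mathfrak{b}(y)S}]=\deg y/(\deg y+\theta\mathfrak{b}(y))$; then with probability $\mathfrak{b}(y)/\deg y$ a fresh edge is crossed and the interval ends, while with probability $1/\deg y$ the walk moves to each traversed neighbour $z$ and restarts with factor $\phi(z)$. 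This gives the finite linear system
$$
\phi(y)=\frac{\mathfrak{b}(y)+\sum_{z:\,\{y,z\}\in\mathfrak{C}_{H_n}}\phi(z)}{\deg y+\theta\,\mathfrak{b}(y)},\qquad y\in W.
$$

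Then I would solve it: the constant $\phi\equiv(1+\theta)^{-1}$ satisfies the system (a one‑line check using $\deg y=\deg_{\mathfrak{C}_{H_n}}y+\mathfrak{b}(y)$), and it is the only solution, because the associated nonnegative matrix has row sums $\deg_{\mathfrak{C}_{H_n}}y/(\deg y+\theta\mathfrak{b}(y))<1$ at every vertex with $\mathfrak{b}(y)>0$, and whenever $\mathfrak{C}_{H_n}\neq E$ (automatic if $G$ is infinite, and true for $n<|E|$ otherwise) every vertex of $W$ is joined within $\mathfrak{C}_{H_n}$ to such a vertex, so the matrix has spectral radius $<1$ and the affine map $\phi\mapsto A\phi+c$ has a unique fixed point. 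Hence $\phi\equiv(1+\theta)^{-1}$; this forces $\Delta_n<\infty$ a.s., and being valid for all $\theta>0$ it identifies $\Delta_n$ conditionally on $\mathcal{F}_{H_n}$ as $\mathrm{Exp}(1)$, i.e.\ $\P(\Delta_n>t\mid\mathcal{F}_{H_n})=e^{-t}$. The case $n=0$ is immediate, since then $\Delta_0=\deg(0)\,\tau_1$ with $\tau_1\sim\mathrm{Exp}(\deg 0)$.

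As a shorter alternative to the last two steps, one can observe from the time‑integral representation that crossing a fresh edge is the first jump of a counting process whose $\P$‑intensity at time $u$ equals $\mathfrak{b}_u$ (each of the $\mathfrak{b}_u$ untraversed incident edges rings at rate one), so the point‑process time‑change theorem identifies its compensator at that jump, namely $\int_{H_n}^{H_{n+1}}\mathfrak{b}_u\,du=\Delta_n$, as an $\mathrm{Exp}(1)$ variable given $\mathcal{F}_{H_n}$; this route again only needs $H_{n+1}<\infty$ a.s. Either way, the genuine content is the bookkeeping identity $\Delta_n=\int_{H_n}^{H_{n+1}}\mathfrak{b}_u\,du$, which I regard as the only delicate point.
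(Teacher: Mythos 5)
Your argument is correct and, at its core, parallels the paper's: both condition on $X_{H_n}$ together with the edge range $\mathfrak{C}_{H_n}$, and both reduce to a finite linear fixed-point equation indexed by the vertices of $\mathfrak{C}_{H_n}$ whose unique solution is the exponential$(1)$ profile. The paper writes this directly as the ODE system
\[
\bigl(2\dim-\deg_{A}(x)\bigr)\,f_x'(t)=-2\dim\cdot f_x(t)+\sum_{y\sim_A x}f_y(t),
\]
verifies $f_x\equiv e^{-t}$ by inspection, and asserts uniqueness because the system is finite. Your version Laplace-transforms this in $t$ and gets the purely algebraic system for $\phi(y)=\E[e^{-\theta\Delta_n}\mid\cdots]$, which is genuinely cleaner on two points that the paper's proof glosses over: (i) the reduction $\Delta_n=\int_{H_n}^{H_{n+1}}\mathfrak b_u\,du$, which you derive carefully from the definition of $T_e$ (your bookkeeping is correct, including the edge crossed at time $H_{n+1}$); and (ii) uniqueness. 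At any vertex $x$ with all incident edges already traversed the coefficient of $f_x'$ vanishes, so the paper's ``finite system of ODEs'' is actually a differential--algebraic system and the one-line uniqueness claim deserves a word, whereas your substochastic-matrix / spectral-radius argument handles this degeneracy uniformly. Your closing alternative---treating the discovery of the next fresh edge as the first jump of a counting process with instantaneous intensity $\mathfrak b_u$, so that the compensator $\int_{H_n}^{H_{n+1}}\mathfrak b_u\,du=\Delta_n$ at that jump is $\mathrm{Exp}(1)$ by the time-change theorem---is a genuinely different and shorter route, and it is the one I would keep. One last remark: the paper's proof as written hard-codes degree $2d$ (it is phrased for $\Z^d$), while the proposition is invoked later on general locally finite graphs; your argument applies verbatim at that level of generality, which is what is actually needed.
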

\end{mdframed}
\begin{proof}
Let $A = (V_A, E_A)$ be a connected sub-graph of $\Z^d$, with exactly $n$ edges.  Denote by ${\rm deg}_A(x)$ the degree of $x \in A$ induced in  $A$. Moreover, $y \sim_A x$
means that both vertices are in $A$ and they are joined by an edge in this graph.  We prove that for any  $B \in \mathcal{F}_{H_n}$, such that $B \cap \{\mathfrak{C}_{H_n} = E_A\} \neq \emptyset$, one has
$$\P(\Delta_n > t\,|\, \mathfrak{C}_{H_n} = E_A, B) = {\rm e}^{-t}.$$
To avoid confusion with the derivative sign, in this proof only, we use ${\rm dim}$ for the dimension
of the lattice. Set, for $x \in V_A$,
$$
f_x(t) = \P(\Delta_n > t| X_{H_n} = x, \mathfrak{C}_{H_n} = E_A, B).
$$
We next prove that the collection of functions $(f_x(\cdot))_{x\in V_A}$ satisfy the system of differential equations
\begin{equation}\label{eq:diffeq}
\left(
2 \cdot {\rm dim} - \rm{deg}_A(x) \right)
\frac {\rm d}
{{\rm d}t}
f_x(t) = -2 \cdot {\rm dim} \cdot f_x(t) +\sum_{y \colon y \sim_A x} 
f_y(t),
\end{equation}
with initial conditions $f_x(0) = 1$ for all $x \in V_A$. Notice that $f_x(t) = e^{-t}$ is the unique solution of \eqref{eq:diffeq}. In fact, as $A$  is finite, then \eqref{eq:diffeq} is a finite system of differential equations.
Next, we turn to the proof of \eqref{eq:diffeq}. Set $q_x = 2 \cdot {\rm dim} - {\rm deg}_A(x)$. The reader can think of $A$ as the outcome of  $(\mathfrak{R}_{H_n}, \mathfrak{C}_{H_n})$. Hence, 
$q_x$ can be thought of the number of edges incident to $x$ that have not yet been traversed. Next, we
argue that
$$
\begin{aligned}
f_x(t + q_x h) &= \P(\Delta_n > t+ q_x h\;\big{|}\; X_{H_n} = x, \mathfrak{C}_{H_n} = E_A, B)\\
&= \P(\Delta_n > t \;\big{|}\; X_{H_n} = x, \mathfrak{C}_{H_n} = E_A, B)(1 - 2\cdot{\rm dim} \cdot h)\\
&\qquad + h
\sum_{y \colon y \sim_A x} 
\P(\Delta_n > t \;\big{|}\; X_{H_n} = y, \mathfrak{C}_{H_n} = E_A, B) + o(h)\\
&= f_x(t)(1 -2h \cdot {\rm dim} ) + h
\cdot
\sum_{y \colon y \sim_A x} 
f_y(t) + o(h).
\end{aligned}•
$$
The second equality is obtained using the following reasoning:\\
\noindent $\bullet$ If the random walk stays at $x$  for the initial $h$ unit of times it contributes $q_xh$ to $\Delta_n$. The
latter holds with probability $(1 -2 \cdot {\rm dim} \cdot h) + o(h)$, whereas\\
\noindent $\bullet$  if the process jumps to a given neighbor $y \sim_A x$, which holds with probability $h$, then it is very
unlikely that it does jump again in the remaining $h$ unit of times, running the time only the time when the process is exposed to untraversed edges.
\end{proof}
\begin{mdframed}
[style=MyFrame1]
\begin{corollary}\label{co:coro1} The random variables  $(\Delta_n)_n$ are i.i.d. exponential(1) under the measure $\P$.
\end{corollary}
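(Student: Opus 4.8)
\emph{Proof proposal.} The plan is to deduce the corollary from Proposition~\ref{pr:changem1} by an elementary tower-property argument, so no new probabilistic input is needed. First I would record the measurability structure of the increments: since $T_e(H_m)=\int_0^{\mathbb{L}_e\wedge H_m}\1_{X_u\sim e}\,{\rm d}u$ is a functional of the trajectory observed up to the stopping time $\mathbb{L}_e\wedge H_m\le H_m$, each partial sum $\Delta_m=\sum_{e\in E}\big(T_e(H_{m+1})-T_e(H_m)\big)$ is $\mathcal{F}_{H_{m+1}}$-measurable, and only finitely many edges contribute to the sum. In particular, for every $n$ one has $\sigma(\Delta_0,\dots,\Delta_{n-1})\subset\mathcal{F}_{H_n}$. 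One should also note in passing that on the graphs under consideration $H_n<\infty$ almost surely for every $n$ (the walk traverses infinitely many distinct edges), so the $\Delta_n$ are genuinely well defined.

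Next, fix $n\ge0$ and $t_0,\dots,t_n\ge0$. Conditioning on $\mathcal{F}_{H_n}$, using that the event $\{\Delta_0>t_0,\dots,\Delta_{n-1}>t_{n-1}\}$ lies in $\mathcal{F}_{H_n}$, and applying Proposition~\ref{pr:changem1}, one gets
$$
\P\!\left(\bigcap_{m=0}^{n}\{\Delta_m>t_m\}\right)
=\E\!\left[\1_{\{\Delta_0>t_0,\dots,\Delta_{n-1}>t_{n-1}\}}\,\P\big(\Delta_n>t_n\,\big|\,\mathcal{F}_{H_n}\big)\right]
={\rm e}^{-t_n}\,\P\!\left(\bigcap_{m=0}^{n-1}\{\Delta_m>t_m\}\right).
$$
Iterating this identity down to the base case $n=0$ (again a direct application of Proposition~\ref{pr:changem1}) yields $\P\big(\bigcap_{m=0}^{n}\{\Delta_m>t_m\}\big)=\prod_{m=0}^{n}{\rm e}^{-t_m}$ for all $n$ and all $t_0,\dots,t_n\ge0$.

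Finally, the joint survival function of a vector of nonnegative random variables determines its law, and $\prod_{m}{\rm e}^{-t_m}$ is precisely the joint survival function of a finite family of independent exponential$(1)$ variables; since the identity holds for every $n$, the standard uniqueness of the law of a sequence from its finite-dimensional distributions gives that $(\Delta_n)_{n\ge0}$ is an i.i.d. exponential$(1)$ sequence under $\P$. There is no genuinely hard step here; the only point requiring a word of care is the measurability claim $\sigma(\Delta_0,\dots,\Delta_{n-1})\subset\mathcal{F}_{H_n}$, which legitimises the conditioning in the displayed identity, together with the routine check that $H_n<\infty$ a.s.
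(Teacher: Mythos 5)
Your argument is correct and is exactly the route the paper intends: Corollary~\ref{co:coro1} is stated without proof precisely because it follows from Proposition~\ref{pr:changem1} by the tower property, using the measurability $\sigma(\Delta_0,\dots,\Delta_{n-1})\subset\mathcal{F}_{H_n}$ to telescope the joint survival function into the product $\prod_m e^{-t_m}$. Your remarks on measurability and on $H_n<\infty$ a.s.\ are the right points of care, and the final appeal to finite-dimensional distributions determining the law of the sequence is standard and adequate.
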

\end{mdframed}
\begin{mdframed}
[style=MyFrame1]
\begin{proposition}\label{pr:TunPa} The random variables  $(\Delta_n)_n$ are i.i.d. exponential($a$) under the measure $\P^{\ssup a}$.
\end{proposition}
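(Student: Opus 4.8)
The plan is to upgrade the change-of-measure identity of Proposition~\ref{pr:changem} from the deterministic jump times $\tau_n$ to the stopping time $H_N$, to recognise the resulting Radon--Nikodym density on $\mathcal{F}_{H_N}$ as a product of independent one-dimensional tilts of the increments $\Delta_0,\dots,\Delta_{N-1}$, and then to read off their law from Corollary~\ref{co:coro1}. The key point is that the exponent $(1-a)\sum_{e}T_e+(\log a)|\mathfrak{C}|$ of Proposition~\ref{pr:changem}, evaluated at $H_N$, equals $(1-a)\sum_{n<N}\Delta_n+(\log a)N$, which is precisely the log-likelihood ratio converting a vector of i.i.d.\ $\mathrm{Exp}(1)$ variables into a vector of i.i.d.\ $\mathrm{Exp}(a)$ variables.

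First I would extend Proposition~\ref{pr:changem} to $H_N$. Write $H_N=\tau_{\kappa_N}$ with $\kappa_N:=\inf\{m\colon|\mathfrak{C}_{\tau_m}|=N\}$; the event $\{\kappa_N=m\}=\{H_N=\tau_m\}$ is $\mathcal{F}_{\tau_m}$-measurable (it equals $\{|\mathfrak{C}_{\tau_{m-1}}|=N-1,\ |\mathfrak{C}_{\tau_m}|=N\}$), and by the standard identification of stopping-time $\sigma$-algebras $A\cap\{\kappa_N=m\}\in\mathcal{F}_{\tau_m}$ for every $A\in\mathcal{F}_{H_N}$. On $\{\kappa_N=m\}$ one has $|\mathfrak{C}_{\tau_m}|=N$ and $\sum_{e\in E}T_e(\tau_m)=\sum_{e\in E}T_e(H_N)$, so applying Proposition~\ref{pr:changem} to $A\cap\{\kappa_N=m\}$ and summing over $m$ gives, for every $A\in\mathcal{F}_{H_N}$,
\[
\P^{\ssup a}(A)=\E\Big[a^{N}\exp\Big\{(1-a)\sum_{e\in E}T_e(H_N)\Big\}\1_A\Big].
\]
Since $H_0=0$ forces $T_e(H_0)=0$ for every $e$, the definition of $\Delta_n$ telescopes into $\sum_{e\in E}T_e(H_N)=\sum_{n=0}^{N-1}\Delta_n$, so the density above is $\prod_{n=0}^{N-1}a\,e^{(1-a)\Delta_n}$, each factor $\mathcal{F}_{H_{n+1}}$-measurable.

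Next I would test this against cylinder events. For Borel sets $B_0,\dots,B_{N-1}\subset(0,\infty)$ the set $\{\Delta_n\in B_n,\ n<N\}$ lies in $\mathcal{F}_{H_N}$, so by the previous display and Corollary~\ref{co:coro1} (the $\Delta_n$ are i.i.d.\ $\mathrm{Exp}(1)$ under $\P$, which makes the expectation factorise),
\[
\P^{\ssup a}\big(\Delta_n\in B_n,\ n<N\big)=\prod_{n=0}^{N-1}\int_{B_n}a\,e^{(1-a)x}e^{-x}\,dx=\prod_{n=0}^{N-1}\int_{B_n}a\,e^{-ax}\,dx .
\]
Hence $\Delta_0,\dots,\Delta_{N-1}$ are i.i.d.\ $\mathrm{Exp}(a)$ under $\P^{\ssup a}$ for every $N$, and letting $N\ti$ proves the claim.

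The main obstacle is not the algebra but the passage through the random index $\kappa_N$: for $\P^{\ssup a}(A)=\sum_m\P^{\ssup a}(A\cap\{\kappa_N=m\})$ to exhaust the probability one needs $H_N<\infty$ almost surely under both $\P$ and $\P^{\ssup a}$, i.e.\ that the walk traverses infinitely many distinct edges. Under $\P$ this is immediate from recurrence/transience of simple random walk on $\Z^d$; under $\P^{\ssup a}$ it follows from the usual confinement argument — were the walk trapped forever in a finite connected region, all of whose edges would eventually be traversed, it would thereafter exit across each boundary edge at the fixed positive rate $a$, a contradiction. I would also state explicitly the (standard) measurability fact that $A\cap\{\kappa_N=m\}\in\mathcal{F}_{\tau_m}$, so that Proposition~\ref{pr:changem} genuinely applies at each deterministic $\tau_m$; everything else is routine bookkeeping.
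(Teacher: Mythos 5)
Your proof is correct and follows essentially the same route as the paper: apply the change-of-measure identity of Proposition~\ref{pr:changem} at the first time $N$ distinct edges have been traversed, observe that $|\mathfrak{C}_{H_N}|=N$ and $\sum_e T_e(H_N)=\sum_{n<N}\Delta_n$ so the density factorises, and invoke Corollary~\ref{co:coro1}. You are more careful than the paper on two points it glosses over — the upgrade of Proposition~\ref{pr:changem} from the deterministic jump index $\tau_n$ to the random index $\kappa_N$ with $H_N=\tau_{\kappa_N}$, and the almost-sure finiteness of $H_N$ under both measures — but this is a sharpening in rigor, not a different argument.
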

\end{mdframed}
\begin{proof} Using $|\mathfrak{C}_{H_n}| =n$, and the change of measure formula from Proposition~\ref{pr:changem}, one has 
$$
\begin{aligned}
&\P^{\ssup a}\big(\Delta_1\in (x_1, x_1+{\rm d} x_1), \ldots, \Delta_n \in (x_n, x_n +{\rm d} x_n)\big)\\
&= \E\left[\exp\left\{(1-a)\sum_{e \in E} T_e(H_n) + (\log a) n \right\}\1_{\Delta_1\in (x_1, x_1+{\rm d} x_1), \ldots, \Delta_n \in (x_n, x_n +{\rm d} x_n)}\right] \\
&= a^n {\rm e}^{(1-a) \sum_{i=1}^n x_i} \P\big(\Delta_1\in (x_1, x_1+{\rm d} x_1), \ldots, \Delta_n \in (x_n, x_n +{\rm d} x_n)\big) = a^n {\rm e}^{-a \sum_{i=1}^n x_i},
\end{aligned}
$$
where in the last step we used Corollary~\ref{co:coro1}.
\end{proof}
Recall that $(\tau_n)_n$ is the sequence of times when the process jumps.
\begin{proof}[Proof of Theorem~\ref{th:main}]
On the event $\{|\mathfrak{R}_{\tau_N}| \le uN^{d/(d+2)}\}$, one has that $|\mathfrak{C}_{\tau_N}| \le d uN^{d/(d+2)}$.  Set $J_N :=  d uN^{d/(d+2)}$. For the case $a \in (0,1)$, one has
$$
\begin{aligned}
\P^{\ssup a} \left(|\mathfrak{R}_{\tau_N}| \le uN^{d/(d+2)}\right) &= \E\left[ {\rm e}^{(1-a)\sum_e T_e(\tau_N) +(\log a)|\mathfrak{C}_{\tau_N}|} \1_{|\mathfrak{R}_{\tau_N}| \le uN^{d/(d+2)}}\right]\\
&\le \E\left[ {\rm e}^{(1-a)\sum_e T_e(H_{J_N}) } \1_{|\mathfrak{R}_{\tau_N}| \le uN^{d/(d+2)}}\right] \qquad\mbox{\color{blue}(As $\tau_n \le H_{J_N}$, and $a \in (0,1)$)}.
\end{aligned} 
$$
Using H\"older inequality, we obtain
$$
\begin{aligned}
\P^{\ssup a} \left(|\mathfrak{R}_{\tau_N}| \le uN^{d/(d+2)}\right)&\le  \E\left[ {\rm e}^{p(1-a)\sum_e T_e(H_{J_N})}\right]^{1/p} \P\left(|\mathfrak{R}_{\tau_N}| \le uN^{d/(d+2)}\right)^{\frac{p-1}p}\\
&= \E\left[ {\rm e}^{p(1-a)\sum_e T_e(H_{J_N})}\right]^{1/p} \P\left({\rm e}^{-|\mathfrak{R}_{\tau_N}|} \ge {\rm e}^{-uN^{d/(d+2)}}\right)^{\frac{p-1}p}\\
&\le   \left(\frac 1{1 - p(1-a)}\right)^{2duN^{d/(d+2)}/p} {\rm e}^{-((\psi_d -u)\frac{p-1}p + o(1)) N^{d/(d+2)}},
\end{aligned}
$$
where in the last step we used the moment generating function for exponential(1) and Theorem \ref{th:DonVar}  (see Appendix) due to Donsker and Varadhan.\\
 For $a \ge 1$, one has 
$$
\begin{aligned}
\P^{\ssup a} \left(|\mathfrak{R}_{\tau_N}| \le uN^{d/(d+2)}\right) &= \E\left[ {\rm e}^{(1-a)\sum_e T_e(\tau_N) +(\log a)|\mathfrak{C}_{\tau_N}|} \1_{|\mathfrak{R}_{\tau_N}| \le uN^{d/(d+2)}}\right]\\
&\le a^{ud N^{d/(d+2)}} \P\left(|\mathfrak{R}_{\tau_N}| \le uN^{d/(d+2)}\right)\\
&\le a^{ud N^{d/(d+2)}}   {\rm e}^{-((\psi_d -u) + o(1)) N^{d/(d+2)}}.
\end{aligned}
$$
Finally, using the definition of $\nu$ given in \eqref{eq:defnu} we get the advertised bound. Notice that  the quantity $o(1)$ appearing in the last expression does not depend on $a$.
\end{proof}

\subsection{Proof of Theorem~\ref{th:maintamen} }
Suppose that  $\mathcal{G}$ is a nonamenable graph rooted at $0$ with  $X_0=0$,  then 
\begin{equation}\label{eq:non-amen-Che}
 \P(X_{\tau_n} =   0) \le {\rm e}^{-\kappa  n}, \qquad \mbox{for all $n\in \N$,} 
 \end{equation}
where $\kappa>0$ depends only on the Cheeger's constant $\gamma_{\mathcal{G}}$, and $\P$ stands for simple random walk measure (see, e.g., Theorem 14 in  \cite{ACK14} or Theorem 6.7 page 184 in \cite{lyons-peres}). Recall that $H_n$ is the first time $t$ such that 
$|\mathfrak{C}_t| = n$.
 \begin{proof}[Proof of Theorem~\ref{th:maintamen} a)]
For $p>1$ with $p(1-a)<1$, one has
$$
\begin{aligned}
\P^{\ssup a}(X_{\tau_n} =   0) &= \E[{\rm e}^{(1-a)\sum_e T_e(\tau_n) + (\log a)|\mathfrak{C}_{\tau_n}|}\1_{X_{\tau_n} =   0}]\\
&\le  \E[{\rm e}^{(1-a)\sum_e T_e(H_n)}\1_{X_{\tau_n} =   0}] \qquad \mbox{\color{blue}(as $\tau_n \le H_n$ and $\log a<0$)}\\
&\le \E[{\rm e}^{(1-a)p\sum_e T_e(H_n)}]^{\frac 1p} \P(X_{\tau_n} =   0)^{\frac{p-1}p}  \qquad \mbox{\color{blue}(Hölder's inequality)}\\
&= \left(\frac{1}{1 -p(1-a)}\right)^{\frac np} {\rm e}^{-\frac{p-1}p\kappa n}.
\end{aligned}
$$
In the last equality, we used that $\sum_e T_e(H_n)$ is Gamma$(1, n)$ under the measure $\P$ (see Proposition~\ref{pr:TunPa}).
For all $a$ close enough to $1$,  the right-hand side is bounded by $\gamma^n $ for some $\gamma \in (0,1)$.
\end{proof}
\begin{proof}[Proof of Theorem~\ref{th:maintamen} b)] Fix $\eps$ small enough to be specified below. Set 
\begin{equation}\label{eq:nonam1}
\begin{aligned}
\P^{\ssup a}(X_{\tau_n} =   0, |\mathfrak{C}_{\tau_n}| < \eps n )
&= \E[{\rm e}^{(1-a)\sum_e T_e(\tau_n) + (\log a)|\mathfrak{C}_{\tau_n}|}\1_{X_{\tau_n} =   0,\, |\mathfrak{C}_{\tau_n}| < \eps n}] \\
&\le \E[{\rm e}^{(1-a)\sum_e T_e(H_{\ceil{\eps n}}) }\1_{X_{\tau_n} =   0}] \qquad \mbox{\color{blue}(as $\tau_n \le H_{\ceil{\eps n}}$)}
\end{aligned}
\end{equation}
Using the fact that $\sum_e T_e(H_{j})$ is Gamma$(j, a)$,  combined with Hölder inequality, and $p>1$ with $(1-a)p<1$, one has
\begin{equation}\label{eq:expb}
\begin{aligned}
 \P^{\ssup a}(X_{\tau_n} =   0, |\mathfrak{C}_{\tau_n}| < \eps n ) 
 &\le \left(\frac{1}{1 -p(1-a)}\right)^{\frac {\eps n+1} p} \P(X_{\tau_n} =   0)^{\frac{p-1}p} \le \left(\frac{1}{1 -p(1-a)}\right)^{\frac {\eps n+1} p} {\rm e}^{-\frac{p-1}p\kappa n}.
 \end{aligned}
\end{equation}
Choose $\eps$ small enough such that the previous expression is summable in $n$.
For this choice of $\eps$, one has that 
\begin{equation}\label{eq:BCNA}
\sum_{n=1}^\infty \P^{\ssup a}(X_{\tau_n} =   0, |\mathfrak{C}_{\tau_n}| < \eps n ) < \infty.
\end{equation}
Notice that $\tau_n > 2n$ holds only for finitely many $n$, under $\P^{\ssup a}$.   Hence $\lim_{t\ti} \frac{|\mathfrak{C}_t|}{t} = 0$ implies that $\lim_{n\ti} \frac{|\mathfrak{C}_{\tau_n}|}{n} = 0$. 
Finally, notice that  $\{\X$ is recurrent$\} \cap \{\lim_{t\ti} \frac{|\mathfrak{C}_t|}{t} = 0\} \subset B$ where $B$ is the event that there are infinitely many times $\tau_n$ such that $X_{\tau_n} =   0, |\mathfrak{C}_{\tau_n}| < \eps n$. 
 Using \eqref{eq:BCNA}, combined with the first Borel-Cantelli Lemma, we obtain $\P^{\ssup a}(B)=0$. The latter implies that 
 $$
 \P^{\ssup a}(\{\X \mbox{ is recurrent}\} \cap \{\lim_{t\ti} \frac{|\mathfrak{C}_t|}{t} = 0\}) = 0.
 $$
 \end{proof}
\subsection{Proof of Theorem~\ref{th:main8}}
Fix a graph $G= (V, E)$ where simple random walk is transient.~Let $S$ be the first return time to the origin.
Assume that $\limsup_{a \uparrow 1} \E^{(a)}[S]<\infty \newconstant{co:new00}$ and reason by contradiction.
  Let $\useconstant{co:new00} :=  - \log \P(S  <\infty)$. As simple random walk is transient, then $\useconstant{co:new00} \in (0, \infty)$.  Fix $\alpha>1$ such that $\log \alpha < c_1$. Define the probability measure
\[
\widetilde{\P}  :=  {\rm e}^{\useconstant{co:new00} } \mathbf{1}_{\{S <\infty\}} \P.  \newconstant{co:new2}
\]
Let $\useconstant{co:new2} := \floor{(1-1/\alpha)^{-1}\E^{(a)}[S]}+1 $. Set 
$W := \P^{\ssup a} (S   < \useconstant{co:new2} )$. We show next that $ W \ge 1/2$. In fact, as $S$ takes integer values,
$$
\P^{\ssup a} (S  \ge  \useconstant{co:new2} )  =  \P^{\ssup a} (S  \ge  2\E^{(a)}[S])\le \frac{\E^{\ssup a} [S]}{(1-1/\alpha)^{-1} \E^{(a)}[S]} = (1-1/\alpha).
$$
Define
\[
\mathbb{Q}^{(a)} := W^{-1} {\rm e}^{(1-a)\sum_{e}T_e(\tau_{\useconstant{co:new2}})+(\log a)|\mathfrak{C}_{\tau_{\useconstant{co:new2} }}|}\mathbf{1}_{S<\useconstant{co:new2} } \P.
\]

Consider the relative entropy, also known as Kullback-Leibler divergence
\begin{equation}
\mathrm{KL}(\widetilde{\P} \| \mathbb{Q}^{(a)}) := \int \log\left(\frac{d\mathbb{Q}^{(a)}}{d\widetilde{\P}}\right) d\mathbb{Q}^{(a)}.
\end{equation}
Using $ W \ge 1/\alpha$, combined with  $\sum_{e \in E} T_e(\tau_{\useconstant{co:new2} }) \le \sum_{e \in E} T_e(H_{\useconstant{co:new2}  })$ and  $a\in (0,1)$, one has
\begin{equation}
\begin{aligned}
\mathrm{KL}(\widetilde{\P} \| \mathbb{Q}^{(a)})  &\le   - \useconstant{co:new00}  + (1 - a) \mathbb{E}^{(a)}\left[ \sum_{e \in E} T_e(H_{\useconstant{co:new2} })  \right] + \log \alpha\\
&\le  - \useconstant{co:new00} + 4 \frac{1 - a}a\E^{(a)}[S] + \log \alpha, \qquad \qquad \mbox{\color{blue}{(as $T_e(H_{\useconstant{co:new2} })\sim$ Gamma($\useconstant{co:new2}$, $a$))}}. 
\end{aligned}
\end{equation}
 If we choose $a<1$ close enough to 1 we obtain that $\mathrm{KL}(\widetilde{\P} \| \mathbb{Q}^{(a)})<0$ for all large enough $N$.
 This produces a contradiction as the entropy cannot be negative.
 \hfill $\qed$

\section{ Local time theorems for general self-interacting random walks} 
\begin{proof}[Proof of Theorem~\ref{th:supermainth}]
Recall that  $V' \subset V$ is the support of $k$, i.e. $i\in V'$ if there exists $j$ such that  $\max\{k_{i, j}, k_{j, i}\} \ge 1$. Fix an environment $\omega$. The event $\{k(X, t) = k,\; \ell(X, t) \in (\ell, \ell+ d\ell),\; \vec{T}(X, t) = \vec{T}\}$ holds if and only if  the following holds for each vertex $i \in V'$.
\begin{itemize}[ leftmargin=*, itemsep=0pt, parsep=0pt, topsep=0pt, partopsep=0pt]
\item For each neighbor $j$ of $i$, such that $(i, j) \notin \vec{T}$, there are exactly $k_{i, j}$   jumps from $i$ to $j$ in the time interval $(0, \ell_i)$. The probability that these are exactly $k_{i, j}$ is 
$
\mathcal{P}(f_{i,j}, k_{i,j}, \ell_i).
$
\item For the neighbor  $j$ of $i$, where $i \neq i_1$, such that $(i, j) \in \vec{T}$, we require exactly one jump from $i$ to $j$  in  interval $(\ell_i, \ell_i + {\rm d} \ell_i)$, which holds with probability   $
\mathcal{P}^*(f_{i,j}, k_{i,j}, \ell_i) {\rm d} \ell_i.
$
\end{itemize}
As for \eqref{eq:loctim-1.01} it is a direct application of the Poisson summation formula on the lattice, with the constraint given by the divergence-free flows.
\end{proof}
\subsection{Proof of Theorem~\ref{th:RWREdirec}: RWRE}
\begin{proof}[Proof of Theorem~\ref{th:RWREdirec}]
It is immediate to check that $\X$ satisfies the conditions in Theorem\ref{th:supermainth} under the quenched measure, and that
\begin{eqnarray*}
\mathcal{P}(f_{i,j}, k_{i,j}, \ell_i)  &=& \frac{(\omega(i, v) \ell_i)^{k_{i,v}}}{(k_{i,v})!} {\rm e}^{-\omega(i, v) \ell_i}\\
\mathcal{P}^*(f_{i,j}, k_{i,j}, \ell_i) &=&
\frac{(\omega(i, j) \ell_i)^{k_{i,j} - 1}}{(k_{i,j} - 1)!} {\rm e}^{-\omega(i, j) \ell_i} \omega(i, j).
\end{eqnarray*}
Hence
\begin{equation}\label{eq:loctim11}
\mathbf{P}_\omega \big(k(X, t) = k,\; \ell(X, t) \in (\ell, \ell+ d\ell),\; \vec{T}(X, t) = \vec{T} \big)= \prod_{(i,j) \in \vec{E}} \frac{(\omega(i, j))^{k_{i,j}} \ell_i^{k_{i,j} - h_{i,j}(\vec{T})}}{(k_{i,j} - h_{i,j}(\vec{T}))!} {\rm e}^{-\omega(i, j) \ell_i} m_t({\rm d} \ell).
\end{equation}
Observe that $\prod_{ j\colon (i,j) \in \vec{E}}  {\rm e}^{-\omega(i, j) \ell_i} = {\rm e}^{-\ell_i}$ as $\sum_{j \colon j \sim i} \omega(i, j) =1$. Hence $\prod_{(i,j) \in \vec{E}} {\rm e}^{-\omega(i, j) \ell_i} = {\rm e}^{-t}$. Moreover, observe that $\sum_{j \colon j \sim i} k_{i,j}  - h_{i,j}(\vec{T}) = k_i - (1- \delta_{i_1})$ as there is exactly one exit edge from $i\neq i_1$ and none from $i_1$. Hence the right-hand side of \eqref{eq:loctim11} equals to
\begin{equation}
\begin{aligned}
&= {\rm e}^{-t} \left(\prod_{(i,j) \in \vec{E}} \frac{\ell_i^{k_{i,j} - h_{i,j}(\vec{T})}}{(k_{i,j} - h_{i,j}(\vec{T}))!} \right) \left(\prod_{(i,j) \in \vec{E}} \omega(i, j)^{k_{i,j} }\right)m_t({\rm d} \ell) \\
&= {\rm e}^{-t} \ell_{i_1} \left(\prod_{i \in V} \ell_i^{k_i -1}\right) \left(\prod_{(i,j) \in \vec{E}} \frac{1}{(k_{i,j} - h_{i,j}(\vec{T}))!} \right)\left(\prod_{(i,j) \in \vec{E}} \omega(i, j)^{k_{i,j} }\right)m_t({\rm d} \ell).
\end{aligned}
\end{equation}
By averaging over the  environment, we obtain 
$$
\E\left[\prod_{(i,j) \in \vec{E}} \omega(i, j)^{k_{i,j} }\right] = \prod_{i \in V'}  \Psi((k_{i,j} )_{j \sim i}).
$$
\end{proof}
\begin{proof}[Proof of Theorem~\ref{th:DRW}]
We compute $\Psi$ for the case where $\omega(i)$ are i.i.d. Dirichlet$(1, 1, \ldots, 1)$. Denote by $i_1, i_2, \ldots i_{\Delta}$ the neighbors of $i$, with a specific order. 
$$
\begin{aligned}
\E[\prod_{j \colon j \sim i} \omega(i, j)^{k_{i,j} }] &= \Gamma(\Delta) \int_{\mathcal{S}(1, \Delta)} \prod_{r=1}^{\Delta} x_r^{k_{i, i_r} } m({\rm d} x) =\frac{\Gamma(\Delta)}{\Gamma(k_{i} +\Delta)}  \prod_{j \colon j \sim i} \Gamma(k_{i,j}  +1). 
\end{aligned}
$$
Hence, plugging the previous expression in \eqref{eq:loctim}, we obtain
\begin{equation}\label{eq:loctim101}
\begin{aligned}
\mathsf{P} \big(&k(X, t) = k,\; \ell(X, t) \in (\ell, \ell+ d\ell),\; \vec{T}(X, t) = \vec{T} \big) \\
&= {\rm e}^{-t} \Gamma(\Delta)^{|V'|}  \left(\prod_{i \in V'} \frac{\ell_i^{k_i -1}}{\Gamma(k_{i} +\Delta)}\right) \ell_{i_1} \left(\prod_{(i,j) \in \vec{T}} k_{i,j}\right) m_t({\rm d}\ell)\\
\end{aligned}
\end{equation}
By summing over the elements $\vec{T} \in \mathcal{T}_{i_1, V'}$, one obtains  the result.
\end{proof}
By integrating over the simplex,  one obtains.
\begin{equation}\label{eq:dir1}
\begin{aligned}
&\mathsf{P}^{\ssup{Dir}} \big(k(X, t) = k,\; X_t = i_1\big)= \sum_{\vec{T} \in \mathcal{T}_{i_1, V'}}  \Gamma(\Delta)^{|V'|} \frac{t^{\|k\|}}{\|k\|!} {\rm e}^{-t} \Big(\prod_{i \in V'} \frac{\Gamma(k_{i} + \delta_{i_1})}{\Gamma(k_{i} - 1+\Delta + \delta_{i_1})}\Big).
\end{aligned}
\end{equation}

\subsection{Proof of Theorem~\ref{th:firstfo0}}

We rely on the following result which is from Feller Vol II  problem 12 page 40
\begin{mdframed}[style=MyFrame1]
\begin{lemma}\label{le:Pdnk}
Fix a one to one function  $ f \colon \N \rightarrow (0, \infty)$.   Consider the collection $(X_i)_i$ of independent exponential random variables, where $X_i$ has rate $f(i)$. Then
\begin{equation}\label{eq:nicefor}
\P\Big(\sum_{i=1}^n X_i \in (t, t + {\rm d} t)\Big)  = (\prod_{k=1}^n f(k) ) \sum_{i=1}^n \Big( \prod_{\substack{j = 1 \\ j \ne i}}^n \frac{1}{f(j) - f(i)} \Big) e^{-f(i) t} {\rm d} t, \quad t \ge 0
\end{equation}
\end{lemma}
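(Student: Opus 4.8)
The plan is to pass to Laplace transforms, decompose into partial fractions, and invert term by term. Write $S_n := \sum_{i=1}^n X_i$. Since the $X_i$ are independent with $X_i$ exponential of rate $f(i)$, for $s \ge 0$ one has $\E[e^{-sS_n}] = \prod_{k=1}^n \E[e^{-sX_k}] = \prod_{k=1}^n \frac{f(k)}{s+f(k)}$. The right-hand side is, up to the constant $\prod_{k=1}^n f(k)$, the rational function $R(s) := \prod_{k=1}^n (s+f(k))^{-1}$, whose denominator has the $n$ \emph{distinct} roots $-f(1),\dots,-f(n)$ precisely because $f$ is one-to-one.

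Next, perform the partial-fraction (Heaviside cover-up) decomposition of $R$: since all poles are simple, $R(s) = \sum_{i=1}^n \frac{A_i}{s+f(i)}$ with $A_i = \lim_{s\to -f(i)}(s+f(i))R(s) = \prod_{j\ne i}\frac{1}{f(j)-f(i)}$. Hence $\E[e^{-sS_n}] = \big(\prod_{k=1}^n f(k)\big)\sum_{i=1}^n A_i\,(s+f(i))^{-1}$. Now invert: each summand $c\,(s+f(i))^{-1}$ is the Laplace transform of $t\mapsto c\,e^{-f(i)t}\mathbf 1_{\{t\ge 0\}}$, so by linearity the function $g_n(t) := \big(\prod_{k=1}^n f(k)\big)\sum_{i=1}^n \big(\prod_{j\ne i}\frac{1}{f(j)-f(i)}\big)e^{-f(i)t}$ has Laplace transform equal to $\E[e^{-sS_n}]$ on $s\ge 0$. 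By uniqueness of the Laplace transform (Lerch's theorem), applicable because $S_n$ has a bounded continuous density, being a finite convolution of exponential densities, $g_n$ is exactly that density, which is \eqref{eq:nicefor}.

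A purely probabilistic alternative is induction on $n$: writing $S_n = S_{n-1}+X_n$ and convolving the inductive formula for the density of $S_{n-1}$ with $f(n)e^{-f(n)u}$ produces a linear combination of $e^{-f(i)t}$, $i\le n$, and one checks the coefficients collapse to the claimed form via the same cover-up identity; this is a routine but slightly messier computation and still needs the distinctness of the $f(i)$. I do not expect any genuine obstacle here. The only points requiring care are: that injectivity of $f$ is exactly what makes the poles simple and the partial fraction coefficients $\prod_{j\ne i}(f(j)-f(i))^{-1}$ well defined; and that it is uniqueness of the Laplace transform, not just equality of transforms, that lets us conclude the \emph{density} of $S_n$ has the stated expression.
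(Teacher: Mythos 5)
Your proof is correct. Note that the paper does not actually prove this lemma; it simply cites Feller, Vol.~II, Problem~12, p.~40, where the result (the density of a hypoexponential, i.e.\ a sum of independent exponentials with distinct rates) is stated. Your Laplace-transform argument --- computing $\E[e^{-sS_n}]=\prod_{k=1}^n f(k)/(s+f(k))$, doing the partial-fraction expansion with simple poles at $-f(i)$ (distinct because $f$ is injective), reading off $A_i=\prod_{j\ne i}(f(j)-f(i))^{-1}$ by the cover-up rule, and inverting term by term with uniqueness of the Laplace transform --- is the standard proof of exactly this formula and fills the gap cleanly. The inductive/convolution alternative you sketch is also a well-known route; it needs the same cover-up identity at the induction step and is indeed messier, so the transform proof is the better choice here. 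One small point worth making explicit when invoking Lerch: for $n\ge 2$ the density of $S_n$ is continuous on all of $[0,\infty)$ (it vanishes at $0$), while for $n=1$ it has a jump at $0$; in either case both the true density and your candidate $g_n$ are right-continuous and bounded on $[0,\infty)$, so equality of transforms gives pointwise equality on $(0,\infty)$, which is all the lemma asserts.
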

\end{mdframed}
\vspace{0.2cm}
Let $\X$ be a directed reinforced random walk with strictly increasing reinforcement function $f$. 
\vspace{0.2cm}
\begin{mdframed}[style=MyFrame1]
\begin{lemma}
$$
\P(S_n \in (0, \ell), S_{n+1} > \ell) = \Big(\prod_{k=1}^n f(k) \Big)\sum_{i=1}^n\Big(\prod_{\substack{j = 1 \\ j \ne i}}^{n+1} \frac{1}{f(j) - f(i)}\Big) ( {\rm e}^{-f(i) \ell} - {\rm e}^{-f(n+1) \ell}).
$$
\end{lemma}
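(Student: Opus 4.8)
\emph{The plan.} I would deduce this directly from Lemma~\ref{le:Pdnk} by conditioning on $S_n$. Here $S_m = \sum_{i=1}^m X_i$ denotes the $m$-th partial sum of the independent exponentials of Lemma~\ref{le:Pdnk}, with $X_i$ of rate $f(i)$; enlarge the family by one more independent variable $X_{n+1}$ of rate $f(n+1)$, so that $S_{n+1} = S_n + X_{n+1}$ with $X_{n+1}$ independent of $S_n$. Since $X_{n+1} > 0$ almost surely and $S_n$ has a density, the event in the statement agrees up to a null set with $\{S_n \le \ell\} \cap \{S_n + X_{n+1} > \ell\}$, and conditioning on the value of $S_n$ gives
\[
\P\big(S_n \in (0,\ell),\, S_{n+1} > \ell\big) = \int_0^\ell \P(X_{n+1} > \ell - s)\, \P(S_n \in {\rm d}s) = \int_0^\ell e^{-f(n+1)(\ell - s)}\, \P(S_n \in {\rm d}s).
\]

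\emph{Carrying it out.} Next I would substitute the explicit density \eqref{eq:nicefor} for $\P(S_n \in {\rm d}s)$ and interchange the finite sum with the integral, obtaining
\[
\P\big(S_n \in (0,\ell),\, S_{n+1} > \ell\big) = \Big(\prod_{k=1}^n f(k)\Big) \sum_{i=1}^n \Big(\prod_{\substack{j=1\\ j \ne i}}^n \frac{1}{f(j)-f(i)}\Big)\, e^{-f(n+1)\ell} \int_0^\ell e^{(f(n+1)-f(i))s}\, {\rm d}s.
\]
Because $f$ is one-to-one, $f(n+1) \ne f(i)$ for every $i \le n$, so $\int_0^\ell e^{(f(n+1)-f(i))s}\,{\rm d}s = \big(e^{(f(n+1)-f(i))\ell} - 1\big)/\big(f(n+1)-f(i)\big)$; multiplying by $e^{-f(n+1)\ell}$ converts this expression into $\big(e^{-f(i)\ell} - e^{-f(n+1)\ell}\big)/\big(f(n+1)-f(i)\big)$. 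Absorbing the extra factor $1/(f(n+1)-f(i))$ into the product over $j$ — which extends its index range from $\{1,\dots,n\}\setminus\{i\}$ to $\{1,\dots,n+1\}\setminus\{i\}$ — yields precisely the asserted identity.

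\emph{On the difficulty.} There is essentially no obstacle here: granted Lemma~\ref{le:Pdnk}, this is a one-line computation, and the only points worth a remark are that the injectivity hypothesis on $f$ is exactly what keeps the integral $\int_0^\ell e^{(f(n+1)-f(i))s}\,{\rm d}s$ non-degenerate (and all denominators $f(j)-f(i)$ nonzero), and that passing from $\{S_n\in(0,\ell)\}$ to $\{S_n\le\ell\}$ is harmless since $S_n$ is absolutely continuous. A slightly less tidy alternative would be to use the inclusion $\{S_{n+1}\le\ell\}\subseteq\{S_n\le\ell\}$ to write $\P(S_n\in(0,\ell),S_{n+1}>\ell)=\P(S_n\le\ell)-\P(S_{n+1}\le\ell)$ and evaluate each cumulative distribution function by integrating \eqref{eq:nicefor}; this also works, but then one must reconcile a sum indexed by $i\le n$ with one indexed by $i\le n+1$, which the conditioning argument avoids.
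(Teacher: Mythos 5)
Your proof is correct and follows essentially the same route as the paper: condition on $S_n$ to write the probability as $\int_0^\ell e^{-f(n+1)(\ell-s)}\,\P(S_n\in{\rm d}s)$, substitute the density from Lemma~\ref{le:Pdnk}, interchange sum and integral, compute the elementary integral, and absorb the resulting factor $1/(f(n+1)-f(i))$ into the product. The only cosmetic difference is that the paper writes this as a single displayed chain while you spell out the bookkeeping in prose, but the underlying computation is identical.
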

\end{mdframed}
\begin{proof}
$$
\begin{aligned}
\P(S_n \in (0, \ell), S_{n+1} > \ell) &= \int_{0}^\ell \P(S_n \in (t, t+{\rm d} t)) {\rm e}^{-f(n+1) (\ell -t)} {\rm d} t\\
&= {\rm e}^{-f(n+1) \ell}   \int_{0}^\ell (\prod_{k=1}^n f(k) ) \sum_{i=1}^n \Big( \prod_{\substack{j = 1 \\ j \ne i}}^n \frac{1}{f(j) - f(i)} \Big) {\rm e}^{-(f(i)-f(n+1)) t}  {\rm d} t\\
&= \Big(\prod_{k=1}^n f(k) \Big)\sum_{i=1}^n\Big(\prod_{\substack{j = 1 \\ j \ne i}}^{n+1} \frac{1}{f(j) - f(i)}\Big) ( {\rm e}^{-f(i) \ell} - {\rm e}^{-f(n+1) \ell})
\end{aligned}
$$
\end{proof}
\subsection{Directed Once-Reinforced Random walks}
Define the incomplete gamma function $\gamma(s, x)$ by
\begin{equation}\label{eq:gammainc}
\gamma(s, x) = \int_0^x t^{s-1} e^{-t} \, dt = x^s e^{-x} \sum_{k=0}^{\infty} \frac{x^k}{\Gamma(s + k + 1)} = \sum_{k=0}^{\infty} \frac{x^s e^{-x} x^k}{s(s+1)\cdots(s+k)}.
\end{equation}
\begin{mdframed}[style=MyFrame1]
\begin{lemma}\label{le:poiss1} Let $X, Y, Z$ be  three independent random variables. We assume that 
\begin{itemize}[ leftmargin=*, itemsep=0pt, parsep=0pt, topsep=0pt, partopsep=0pt]
\item $X$ is an   exponential random variable with mean $1/a$, where $a \in (0, \infty)$.
\item  $Y$ is distributed as Gamma $(n-1, 1)$, with the convention that $Y = 0$ if $n = 1$.
\item $Z$ is an exponential with parameter one.
\end{itemize}
 Then
$$
\mathbb{P}(X + Y \in [\ell, \ell + d\ell]) = a e^{-a \ell} d\ell \cdot \frac{\gamma(n-1, (1 - a)\ell)}{\Gamma(n-1)(1 - a)^{n-1}}
$$
\begin{equation}\label{eq:PXY}
\begin{aligned}
\mathbb{P}(X + Y \leq \ell < X + Y + Z) &= \frac{a}{(1-a)^n(n-1)!} {\rm e}^{-a \ell}  \gamma(n, (1-a) \ell) =:g(n, a).
\end{aligned}
\end{equation} 
\end{lemma}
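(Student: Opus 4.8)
The plan is to compute both densities directly by conditioning on the value of the Gamma variable $Y$ (or, equivalently, on $X+Y$), using the convolution structure of the three independent variables. Since $X \sim \mathrm{Exp}(a)$ and $Y \sim \mathrm{Gamma}(n-1,1)$ are independent, the density of their sum $W := X+Y$ at a point $\ell$ is
\[
f_W(\ell) = \int_0^\ell a e^{-a(\ell - y)} \cdot \frac{y^{n-2} e^{-y}}{\Gamma(n-1)} \, dy
= \frac{a e^{-a\ell}}{\Gamma(n-1)} \int_0^\ell y^{n-2} e^{-(1-a)y} \, dy.
\]
The substitution $u = (1-a)y$ turns the integral into $(1-a)^{-(n-1)} \int_0^{(1-a)\ell} u^{n-2} e^{-u}\, du = (1-a)^{-(n-1)} \gamma(n-1, (1-a)\ell)$, which gives the first claimed identity. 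The case $n=1$ (where $Y=0$) should be checked separately, but it is immediate since then $W = X$ and the stated formula degenerates correctly via the conventions on $\gamma$.

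**For the second identity**, I would write $\mathbb{P}(W \le \ell < W + Z)$ by conditioning on $W$: since $Z \sim \mathrm{Exp}(1)$ is independent of $W$,
\[
\mathbb{P}(W \le \ell < W+Z) = \int_0^\ell f_W(s)\, e^{-(\ell - s)} \, ds
= e^{-\ell} \int_0^\ell f_W(s) e^{s}\, ds.
\]
Substituting the expression for $f_W$ from the first part, the factor $e^{s}$ cancels against part of the $e^{-as}$ weight, leaving
\[
e^{-\ell} \cdot \frac{a}{\Gamma(n-1)(1-a)^{n-1}} \int_0^\ell e^{-(a-1)s} \gamma(n-1, (1-a)s)\, ds.
\]
The remaining step is to evaluate $\int_0^\ell e^{(1-a)s}\gamma(n-1,(1-a)s)\,ds$; after the substitution $v = (1-a)s$ this is a standard antiderivative of the form $\int e^{v}\gamma(n-1,v)\,dv$, which by the series representation $\gamma(n-1,v) = v^{n-1}e^{-v}\sum_{k\ge0} v^k/\Gamma(n+k)$ reduces to an elementary term-by-term integration producing $\gamma(n,(1-a)\ell)/(1-a)$ up to the bookkeeping constants. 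Collecting the constants yields $\frac{a}{(1-a)^n (n-1)!} e^{-a\ell}\gamma(n,(1-a)\ell)$, as claimed.

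**The main obstacle** I anticipate is the identity for the antiderivative $\int_0^x e^{v}\gamma(n-1,v)\,dv = \gamma(n,x)$, which is the only non-purely-mechanical ingredient; it is cleanest to verify it by differentiating the right-hand side, using $\frac{d}{dx}\gamma(n,x) = x^{n-1}e^{-x}$ together with the recursion $\gamma(n,x) = (n-1)\gamma(n-1,x) - x^{n-1}e^{-x}$ — wait, more carefully, one checks $\frac{d}{dx}\gamma(n,x) = x^{n-1}e^{-x}$ directly and matches it against $e^{x}\gamma(n-1,x)$ via integration by parts on $\gamma(n,x) = \int_0^x t^{n-1}e^{-t}\,dt$. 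An alternative that sidesteps the antiderivative entirely is probabilistic: note that $W + Z = X + Y + Z$ where $Y + Z \sim \mathrm{Gamma}(n,1)$ (sum of $\mathrm{Gamma}(n-1,1)$ and $\mathrm{Exp}(1)$), so $\mathbb{P}(W \le \ell < W+Z)$ is the probability that among the "$a$-clock" $X$ and the "rate-one arrivals" we have fewer than $n$ rate-one events and the $a$-event before time $\ell$ — this can be matched to $g(n,a)$ by the first formula applied with $n \mapsto n+1$ and an inclusion, but the cleanest route remains the direct computation above. I would present the direct convolution argument as the proof and relegate the antiderivative check to a one-line differentiation.
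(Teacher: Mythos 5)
Your first identity is computed by the same convolution argument the paper uses (the paper conditions on $X$, you condition on $Y$; the two are cosmetically equivalent and give the same substitution). No issues there.

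For the second identity, your route differs from the paper's: you condition on $W=X+Y$ and then need an antiderivative identity for the lower incomplete gamma function, whereas the paper conditions on $X$ and invokes the Poisson/Gamma duality, namely that for fixed $x$ the event $\{Y\le \ell-x<Y+Z\}$ is exactly the event that a rate-one Poisson process has exactly $n-1$ arrivals in $[0,\ell-x]$, so $\mathbb{P}(Y\le \ell-x<Y+Z)=e^{-(\ell-x)}\tfrac{(\ell-x)^{n-1}}{(n-1)!}$. That observation turns the remaining integral into the same single Gamma-type integral as in the first part and sidesteps any antiderivative of $\gamma$ entirely.

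The gap in your version is the claimed identity
\begin{equation*}
\int_0^x e^{v}\,\gamma(n-1,v)\,dv \;=\; \gamma(n,x),
\end{equation*}
which is false; the correct identity is
\begin{equation*}
\int_0^x e^{v}\,\gamma(n-1,v)\,dv \;=\; \frac{e^{x}\,\gamma(n,x)}{n-1}\;=\;\frac{\Gamma(n-1)}{\Gamma(n)}\,e^{x}\gamma(n,x),
\end{equation*}
as one sees immediately from $e^{v}\gamma(n-1,v)=\Gamma(n-1)\sum_{k\ge n-1}v^k/k!$ and term-by-term integration. (For $n=2$: $\int_0^x(e^v-1)\,dv=e^x-1-x=e^x\gamma(2,x)$, which is not $\gamma(2,x)=1-e^{-x}(1+x)$.) The extra factor $e^{x}$ is not a bookkeeping constant — after the substitution $v=(1-a)s$ it becomes $e^{(1-a)\ell}$, which is precisely what converts the prefactor $e^{-\ell}$ into the advertised $e^{-a\ell}$; and the factor $1/(n-1)$ is what turns $\Gamma(n-1)$ into $\Gamma(n)=(n-1)!$ in the denominator. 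Following your stated identity literally would produce the wrong final expression. So the plan is viable, but the one non-mechanical step you flagged as the "main obstacle" is stated incorrectly and needs to be fixed (or, more cleanly, replaced by the Poisson-counting argument that the paper uses, which you mention only in passing).
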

\end{mdframed}
\begin{proof}
 Conditioning on $X$ (alternatively using convolution), one has 
\begin{equation}\label{eq:PXY}
\begin{aligned}
\mathbb{P}(X + Y &\in  [\ell, \ell + d\ell]) = \int_0^\ell a {\rm e}^{- ax} \mathbb{P}(Y \in  [\ell -x, \ell -x + d\ell] ) {\rm d} x\\
&= \int_0^\ell a {\rm e}^{- ax}  {\rm e}^{- (\ell -x)} \frac{(\ell -x)^{n-2}}{(n-2)!}   {\rm d} \ell {\rm d} x\\
 &=  \frac{a}{(n-2)!}  {\rm e}^{-a \ell} \int_0^\ell {\rm e}^{-(1-a)(\ell -x)} (\ell -x)^{n-2}{\rm d} x {\rm d} \ell\\
 &=\frac{a}{(n-2)!}  {\rm e}^{-a \ell} \frac 1{(1-a)^{n-1}} \int_0^{(1-a)\ell} {\rm e}^{-z} z^{n-2} {\rm d}z{\rm d} \ell \qquad \qquad \mbox{(where $z = \ell -x$),}\\
 &= \frac{a}{(1-a)^{n-1}(n-2)!} {\rm e}^{-a \ell}  \gamma(n-1, (1-a) \ell) {\rm d} \ell, \qquad \qquad \mbox{where $\gamma$ is defined in \eqref{eq:gammainc}}. 
\end{aligned}
\end{equation}

In fact, one has 
$$
\begin{aligned}
\mathbb{P}(X + Y \leq \ell < X + Y + Z) &= \int_0^\ell a {\rm e}^{- ax} \mathbb{P}(Y \leq \ell -x <  Y + Z) {\rm d} x.
\end{aligned}
$$
The probability in the integrand equals to the probability that a Poisson process of rate one has exactly $n-1$ arrivals before time $\ell -x$. This is the probability mass function of a Poisson random variable with parameter $\ell -x$ evaluated at $n-1$.  Hence, 
$$
\begin{aligned}
\mathbb{P}(X + Y &\leq \ell < X + Y + Z) = \int_0^\ell a {\rm e}^{- ax} {\rm e}^{-(\ell -x)} \frac{(\ell -x)^{n-1}}{(n-1)!} {\rm d} x\\
&=  \frac{a}{(n-1)!}  {\rm e}^{-a \ell} \int_0^\ell {\rm e}^{-(1-a)(\ell -x)} (\ell -x)^{n-1}{\rm d} x\\
&=\frac{a}{(n-1)!}  {\rm e}^{-a \ell} \frac 1{(1-a)^n} \int_0^{(1-a)\ell} {\rm e}^{-z} z^{n-1} {\rm d}z\\
&= \frac{a}{(1-a)^n(n-1)!} {\rm e}^{-a \ell}  \gamma(n, (1-a) \ell). 
\end{aligned}
$$
\end{proof}
 \begin{mdframed}[style=MyFrame1]
\begin{lemma}\label{le:firstfo} Under the conditions of Theorem~\ref{eq:thD}, one has
\begin{equation}\label{eq:loctim}
\begin{aligned}
\mathbb{P}^{\ssup a} \big(&k(\X, \sigma) = k,\; \ell(\X, \sigma) \in (\ell, \ell+ d\ell),\; \vec{T}(\X, \sigma) = \vec{T} \big) \nonumber \\
&=  \frac{a^{|E'|}}{(1 - a)^{\|k\| -|\vec{T}|}} e^{-a \sum_{i \in V'} deg_i \times  \ell_i } \prod_{ij \in \vec{E}' }   \frac{\gamma(k_{i,j} - h_{i,j}(\vec{T}), (1 - a)\ell_i)}{\Gamma(k_{i,j} -h_{i,j}(\vec{T}))}  m_\sigma(d \ell),
\end{aligned}
\end{equation}
where $(h_{i,j}(\vec{T}))_{(i, j) \in \vec{E}'}$ where defined in \eqref{eq:defH}.
\end{lemma}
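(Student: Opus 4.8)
\emph{Proof strategy.} The formula will be obtained by specialising the master identity \eqref{eq:loctim-1} of Theorem~\ref{th:supermainth} to dORRW$(a)$ and evaluating its two single-edge ingredients by a direct renewal computation that reduces to Lemma~\ref{le:poiss1}. First I would note that dORRW$(a)$ is of the form covered by Theorem~\ref{th:supermainth}: when the walk sits at $i$, the rate at which it jumps along an oriented edge $(i,j)$ depends only on the number $k_{i,j}(\X,t)$ of earlier crossings of $(i,j)$, being $a$ for the first such crossing and $1$ thereafter; that is, $f_{i,j}$ is the ($\ell$-independent) function with $f_{i,j}(0,\cdot)=a$ and $f_{i,j}(m,\cdot)=1$ for $m\ge1$. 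Inserting this into \eqref{eq:loctim-1} reduces the claim to computing, for a single oriented edge, $\mathcal{P}(f_{i,j},n,\ell)=\P\big(Y^{(f_{i,j})}_\ell=n\big)$ and the density $\mathcal{P}^*(f_{i,j},n,\ell)$ of the $n$-th jump time of the point process $Y^{(f_{i,j})}$.

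The process $Y^{(f_{i,j})}$ is the delayed renewal process whose first inter-arrival time is $\mathrm{Exp}(a)$ and whose subsequent inter-arrival times are i.i.d.\ $\mathrm{Exp}(1)$; hence its $n$-th arrival time has the law of $X+Y$ with $X\sim\mathrm{Exp}(a)$, $Y\sim\mathrm{Gamma}(n-1,1)$ independent, and $\{Y^{(f_{i,j})}_\ell=n\}=\{X+Y\le\ell<X+Y+Z\}$ for a further independent $Z\sim\mathrm{Exp}(1)$. Lemma~\ref{le:poiss1} then yields directly, for $n\ge1$,
\[
\mathcal{P}^*(f_{i,j},n,\ell)=\mathrm e^{-a\ell}\,\frac{a}{(1-a)^{n-1}}\,\frac{\gamma\big(n-1,(1-a)\ell\big)}{\Gamma(n-1)},\qquad \mathcal{P}(f_{i,j},n,\ell)=\mathrm e^{-a\ell}\,\frac{a}{(1-a)^{n}}\,\frac{\gamma\big(n,(1-a)\ell\big)}{\Gamma(n)},
\]
together with $\mathcal{P}(f_{i,j},0,\ell)=\mathrm e^{-a\ell}$, where $\gamma$ is the lower incomplete gamma function of \eqref{eq:gammainc}; the borderline cases $n=1$ and $n=0$ are read off with the convention $\gamma(0,x)/\Gamma(0):=1=\lim_{s\downarrow0}\gamma(s,x)/\Gamma(s)$, which renders the two displayed expressions uniform.

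It then remains to collect the factors in \eqref{eq:loctim-1}. Writing $m_{i,j}:=k_{i,j}-h_{i,j}(\vec T)$, a tree edge $(i,j)\in\vec T$ (necessarily traversed, so $k_{i,j}\ge1$) contributes $\mathcal{P}^*(f_{i,j},k_{i,j},\ell_i)$ and a non-tree edge contributes $\mathcal{P}(f_{i,j},k_{i,j},\ell_i)$; by the previous step the factor attached to $(i,j)$ equals $\mathrm e^{-a\ell_i}\cdot\frac{a}{(1-a)^{m_{i,j}}}\cdot\frac{\gamma(m_{i,j},(1-a)\ell_i)}{\Gamma(m_{i,j})}$ if $(i,j)$ is traversed and $\mathrm e^{-a\ell_i}$ otherwise. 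Multiplying over all oriented edges of $G$ with tail in $V'$ — the untraversed ones contributing only an $\mathrm e^{-a\ell_i}$ and thus accounting for the difference between the degree of $i$ in $G$ and in $G'$ — the exponentials combine to $\mathrm e^{-a\sum_{i\in V'}\mathrm{deg}_i\ell_i}$; the factors $a$ combine to $a^{|E'|}$ with $|E'|$ the number of traversed oriented edges ($|\vec T|$ tree edges plus the traversed non-tree ones); the factors $(1-a)^{-m_{i,j}}$ combine, using $\sum_{(i,j)}h_{i,j}(\vec T)=|\vec T|$, to $(1-a)^{-(\|k\|-|\vec T|)}$; and the remaining quotients assemble into $\prod_{(i,j)\in\vec E'}\gamma(k_{i,j}-h_{i,j}(\vec T),(1-a)\ell_i)/\Gamma(k_{i,j}-h_{i,j}(\vec T))$, the untraversed edges contributing the harmless factor $\gamma(0,\cdot)/\Gamma(0)=1$. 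Carrying along the simplex measure $m_\sigma(\mathrm d\ell)$ yields the asserted identity.

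The conceptual content is carried entirely by Theorem~\ref{th:supermainth} and Lemma~\ref{le:poiss1}; the main obstacle is the degenerate bookkeeping in the last step — correctly treating oriented edges with $k_{i,j}=0$ and with $k_{i,j}-h_{i,j}(\vec T)=0$, fixing once and for all the convention for $\gamma(0,\cdot)/\Gamma(0)$, and verifying that the exponents of $a$ and of $1-a$ come out exactly as stated via the identity $\sum_{(i,j)}h_{i,j}(\vec T)=|\vec T|$ and the fact that every tree edge is traversed.
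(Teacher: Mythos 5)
Your proposal is correct and follows essentially the same route as the paper's proof: decompose the event via the last-exit tree/local-time structure (what you make explicit by invoking Theorem~\ref{th:supermainth}, while the paper states the decomposition directly), evaluate $\mathcal{P}$ and $\mathcal{P}^*$ on each oriented edge via Lemma~\ref{le:poiss1}, and collect the prefactors. Your careful bookkeeping of the degenerate cases ($k_{i,j}-h_{i,j}(\vec{T})=0$ and untraversed edges, with the convention $\gamma(0,\cdot)/\Gamma(0)=1$) and the explicit verification that the powers of $a$ and $1-a$ assemble via $\sum_{(i,j)}h_{i,j}(\vec T)=|\vec T|$ are exactly the steps the paper leaves implicit.
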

\end{mdframed}
\begin{proof}[Proof of Lemma \ref{le:firstfo}]
For each vertex $i \in V' \setminus \{i_1\}$ there exist an  edge $(i, j)$ which is traversed at the very last jump from $i$ in the interval $[0, \sigma]$, at time $\ell_i$. The number of jumps from $i$ to any of its neighbors $j$ in the interval $(0, \ell_i)$ is either $k_{i, j} -1$ if $(i, j)  \in \vec{T}$ or equals $k(i, j)$ otherwise.  We use Lemma \eqref{le:poiss1}.
\end{proof}


\section{Proof of Theorem~\ref{eq:thD}}
By a simple integration by parts, we can rewrite 
$$
\gamma(n, x) = \Gamma(n) {\rm e}^{-x} \sum_{k=n}^{\infty} \frac{x^k}{k!}.
$$
  Let $D$ be a geometric random variable with probability mass function 
$$
\P^{\ssup a} (D = n) = a (1-a)^n, \qquad \mbox{for $n \ge 0$}.
$$
One has
\begin{equation}\label{eq:intrge}
\begin{aligned}
 \frac{a}{(1-a)^n(n-1)!} {\rm e}^{-a \ell}  \gamma(n, (1-a) \ell) &= a {\rm e}^{-  \ell} \sum_{k=n}^\infty (1-a)^{k - n} \frac{\ell^k}{k!}= \E\left[{\rm e}^{-  \ell} \frac{\ell^{n +D}}{(n+D)!}  \right].
\end{aligned}
\end{equation}
Using \eqref{eq:intrge}, and setting $k'_{i,j} = k_{i,j} - h_{i,j}(\vec{T})$, where $(h_{i,j})_{(i, j) \in \vec{E}}$ where defined in \eqref{eq:defH}, one has
\begin{equation}\label{eq:idwithk}
 \frac{a}{(1-a)^{k'_{i,j}}(k'_{i,j}-1)!} {\rm e}^{-a \ell} \frac{\gamma(k'_{i,j}, (1 - a)\ell_i)}{\Gamma(k'_{i,j})} = \E\left[{\rm e}^{-  \ell} \frac{\ell^{k'_{i,j} +D}}{(k'_{i,j}+D)!}  \right].
  \end{equation}
By plugging \eqref{eq:idwithk} in Lemma~\ref{le:firstfo}, using the fact that $(D_{i,j})_{(i, j) \in \vec{E}}$ are i.i.d.  one has the following result.
\begin{mdframed}[style=MyFrame1]
\begin{corollary}
  \begin{equation}\label{eq:loctim1}
\begin{aligned}
\mathbb{P}^{\ssup a}_{0} \big(&k(X, \sigma) = k,\; \ell(X, \sigma) \in (\ell, \ell+ d\ell),\; \vec{T}(X, \sigma) = \vec{T} \big) \nonumber \\
&=   {\rm e}^{-  \sum_{i \in V} {\rm deg}_i \ell_i} \prod_{(i,j) \in \vec{E}'} \E^{\otimes D_{i,j}}\left[ \frac{\ell_i^{k_{i,j} + D_{i,j} - h_{i,j}(\vec{T})}}{(k_{i,j}+D_{i,j}- h_{i,j}(\vec{T}))!} \right]m_\sigma(d \ell),
\end{aligned}
\end{equation}
where $(h_{i,j}(\vec{T}))_{(i, j) \in \vec{E}'}$ where defined in \eqref{eq:defH}.
\end{corollary}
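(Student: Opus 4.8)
The plan is to derive the identity \eqref{eq:loctim1} directly from the closed form of Lemma~\ref{le:firstfo}, by rewriting each directed-edge factor there as an expectation over a geometric random variable and then using the independence of the family $(D_{i,j})_{(i,j)\in\vec{E}'}$ to collapse the resulting product of expectations into a single expectation over their product law.

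The algebraic input is the integration-by-parts representation $\gamma(n,x) = \Gamma(n)\,{\rm e}^{-x}\sum_{k\ge n} x^k/k!$ recorded at the start of this section. First I would fix a directed edge $(i,j)$, set $k'_{i,j} := k_{i,j} - h_{i,j}(\vec{T})$, substitute $x = (1-a)\ell_i$ and $n = k'_{i,j}$, and multiply by the scalar and exponential factors that Lemma~\ref{le:firstfo} attaches to that edge. The factor ${\rm e}^{-(1-a)\ell_i}$ produced by the representation cancels the factor ${\rm e}^{-a\ell_i}$ carried by the lemma, leaving a power series in $\ell_i^k/k!$ whose coefficients are, up to normalisation, the geometric weights $a(1-a)^{k-n}$; this is exactly \eqref{eq:intrge}. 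Reindexing $k = n+m$ with $m\ge0$ and recognising $a(1-a)^m = \P^{\ssup a}(D_{i,j}=m)$ turns this series into $\E^{D_{i,j}}\!\big[{\rm e}^{-\ell_i}\,\ell_i^{k'_{i,j}+D_{i,j}}/(k'_{i,j}+D_{i,j})!\big]$, which is \eqref{eq:idwithk}. The useful feature of this step is that it is precisely what converts the weight ${\rm e}^{-a\ell_i}$ appearing in Lemma~\ref{le:firstfo} into the weight ${\rm e}^{-\ell_i}$, so that no further manipulation of the exponentials is needed.

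Next I would perform this substitution in every factor of the product $\prod_{(i,j)\in\vec{E}'}$ in Lemma~\ref{le:firstfo}. The scalar prefactor of the lemma is absorbed edge by edge (using $\sum_{(i,j)} k'_{i,j} = \|k\| - |\vec{T}|$ for the power of $1-a$), the exponential prefactor ${\rm e}^{-a\sum_i {\rm deg}_i \ell_i}$ is consumed by the repeated conversions ${\rm e}^{-a\ell_i}\mapsto{\rm e}^{-\ell_i}$, and, grouping by vertex, the surviving factors ${\rm e}^{-\ell_i}$ assemble into the global weight ${\rm e}^{-\sum_i {\rm deg}_i\ell_i}$. Finally, since the $(D_{i,j})_{(i,j)\in\vec{E}'}$ are independent, $\prod_{(i,j)}\E^{D_{i,j}}[\cdots]$ is a single expectation $\E^{\otimes D_{i,j}}[\cdots]$ over the product measure, which gives \eqref{eq:loctim1}.

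I expect the only delicate part to be the bookkeeping of the constants in the substitution: lining up the powers of $a$, the powers of $1-a$, and the gamma factors $\Gamma(k'_{i,j}) = (k'_{i,j}-1)!$ coming from Lemma~\ref{le:firstfo} with those produced by each use of \eqref{eq:idwithk}, and verifying that the directed edges with a common tail $i$ reconstitute exactly the coefficient ${\rm deg}_i$ in the exponent. Once the reindexing $k\mapsto n+D$ is set up consistently, the remainder of the argument is purely algebraic and presents no genuine difficulty.
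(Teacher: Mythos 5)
Your proposal reproduces the paper's argument exactly: it first derives the geometric-expectation identity \eqref{eq:intrge} and its indexed form \eqref{eq:idwithk} from the series representation $\gamma(n,x)=\Gamma(n){\rm e}^{-x}\sum_{k\ge n}x^k/k!$, then substitutes that into Lemma~\ref{le:firstfo} edge by edge and invokes independence of the $(D_{i,j})$ to merge the per-edge expectations into a single $\E^{\otimes D_{i,j}}$. The bookkeeping you flag (powers of $a$ and $1-a$, the factorials, and the reconstruction of $\sum_i \deg_i\,\ell_i$ from the edge-tail contributions) is precisely what the paper leaves implicit, so the proposal is correct and not genuinely different.
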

\end{mdframed}
\begin{proof}[Proof of Theorem~\ref{eq:thD}]
Let \( b \in \mathcal{I} \) be such that \( b_i = \delta_{i_0}(i) - \delta_{i_1}(i) \) for all \( i \in V \), where $\delta_{i}(j)$ is the usual dirac mass which equals one if and only if $j =i$, and it is zero otherwise. For each pair of neighbors $\{i, j\}$, choose a unique orientation  such that \( b_{i,j} \ge 0 \). Recall that $\vec{E}^+$ be the collection of these oriented edges. In particular, \( \vec{E}^+ = \{ (i, j)\in \vec{E}' \colon b_{i, j} \ge 0 \} \).

To compute the probability in the theorem statement, sum the contributions from Corollary \eqref{eq:loctim1}  over all \( k \in \mathbb{N}^{\vec{E}} \) such that \( b(k) = b \). For each \( (i, j) \in \vec{E}^+ \), we sum over \( k_{ji} \ge 0 \) with \( k_{ij} = k_{ji} + b_{ij} \ge k_{ji} \). Then, using Corollary \eqref{eq:loctim1},  and analysing together the contributions from \( (i, j) \) and \( (j, i) \) for each \( (i, j) \in \vec{E}^+ \),  using independence among $(D_{i,j})_{(i,j) \in \vec{E}}$, one gets 
$$
\begin{aligned}
&\mathbb{P}^{\ssup a} \left( b(k(X, \sigma)) = b,\, \ell(X, \sigma) \in (\ell, \ell + d\ell),\, \vec{T}(X, \sigma) = \vec{T} \right)\\
&= {\rm e}^{ -  \sum_{i \in V} {\rm deg}_i \ell_i } 
\sum_{ (k_{ji}) \in \mathbb{N}^{\vec{E}^+} }
\prod_{(i, j) \in \vec{E}^+}
\frac{(\ell_j)^{k_{j, i}}}{k_{j, i}!}
\frac{(\ell_i)^{k_{i, j}}}{k_{i, j}!}
\prod_{(i, j) \in \vec{T}} \frac{k_{ij}}{\ell_i} 
\prod_{i \in V} d\ell_i.
\end{aligned}
$$

Set \( k'_{j,i} := k_{j, i} - h_{j, i} (\vec{T}) \). Then
\[
k_{i,j} - h_{j, i} (\vec{T}) = k'_{j,i} + \tilde{b}_{i, j}, \quad
k_{i,j} + k_{j, i} = 2k'_{j, i} + \tilde{b}_{i, j} + \mathbf{1}_{i, j \in T}
\]
where \( T \) is the undirected tree associated to \( \vec{T} \). Therefore, 
$$
\begin{aligned}
&\sum_{k_{j, i} \ge h_{j,i}(\vec{T}) }
\frac{ \ell_i^{k_{i, j} + D_{i, j}- h_{i,j}(\vec{T})} \ell_j^{k_{j, i} + D_{j ,i} -h_{j, i}(\vec{T})}}
{(k_{i, j} +D_{i, j}  - h_{i,j}(\vec{T}))! (k_{j, i}+ D_{j ,i} -h_{j, i}(\vec{T}))!}\\
&= \sum_{k'_{j, i} \ge 0}
\frac{\ell_i^{k'_{j,i} + D_{i, j} + \tilde{b}_{i,j}} \ell_j^{k'_{j, i} + D_{j, i}}}
{(k'_{j,i} +D_{i, j}+ \tilde{b}_{i,j})! \, {(k'_{j, i} +  D_{j ,i})}!}\\
&= \ell_i^{\frac{\tilde{b}_{i, j} + D_{i,j} - D_{i,j}}2} \ell_j^{\frac{D_{j, i} - D_{i,j} + \tilde{b}_{j, i}}2} 
\sum_{k'_{ji} \ge 0}
\frac{( \sqrt{\ell_i \ell_j})^{2k'_{j, i} +  D_{i,j} + \tilde{b}_{i,j} + D_{j,i}}}{(k'_{j, i} + D_{i, j} + \tilde{b}_{i,j})! \, (k'_{j,i} + D_{j, i})!}.\\
\end{aligned}
$$
The proof ends by noting 
\[
\prod_{(i, j) \in \vec{E}^+} \ell_i^{\tilde{b}_{i, j}/2} \ell_j^{\tilde{b}_{j, i}/2}
= \prod_{i \in V} \ell_i^{\tilde{b}_i/2}.
\]
\end{proof}
\begin{mdframed}[style=MyFrame1]
\begin{theorem}\label{th:thDo} One has 
\begin{equation}\label{eq:loctim001}
\begin{aligned}
&\vec{\mathbb{P}}^{\ssup a} \big(\tilde{b}(\X, \sigma) = \tilde{b},\; \ell(\X, \sigma) \in (\ell, \ell+ d\ell),\; \vec{T}(\X, \sigma) = \vec{T} \big)  \\
&\le  
a^{|V'|}{\rm e}^{(1-a)\sum_{i \in V'} {\rm deg}_i \ell_i} \mathbb{P} \left( b(k(\X, \sigma)) = b,\, \ell(\X, \sigma) \in (\ell, \ell + d\ell),\, \vec{T}(\X, \sigma) = \vec{T} \right)
\end{aligned}
\end{equation}
\end{theorem}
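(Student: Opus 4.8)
The plan is to compare the explicit density of dORRW$(a)$ provided by Lemma~\ref{le:firstfo} (equivalently the first line of \eqref{eq:loctim0}) with the density of continuous-time simple random walk, which is precisely the special case $f_{i,j}\equiv 1$ of Theorem~\ref{th:supermainth}. For constant rates $\mathcal{P}(1,n,\ell)=e^{-\ell}\ell^{n}/n!$ and $\mathcal{P}^{*}(1,n,\ell)=e^{-\ell}\ell^{n-1}/(n-1)!$, so, keeping also the factors $\mathcal{P}(1,0,\ell_i)=e^{-\ell_i}$ for the oriented edges out of $V'$ that are never crossed, \eqref{eq:loctim-1} reads
\[
\mathbb{P}\bigl(k(\X,\sigma)=k,\ \ell(\X,\sigma)\in(\ell,\ell+d\ell),\ \vec{T}(\X,\sigma)=\vec{T}\bigr)=e^{-\sum_{i\in V'}{\rm deg}_i\ell_i}\prod_{(i,j)\in\vec{E}'}\frac{\ell_i^{\,k_{i,j}-h_{i,j}(\vec{T})}}{(k_{i,j}-h_{i,j}(\vec{T}))!}\;m_\sigma(d\ell).
\]
Since $\tilde{b}$ and $\vec{T}$ together determine $b=b(k)$, the events $\{\tilde{b}(\X,\sigma)=\tilde{b},\ \vec{T}(\X,\sigma)=\vec{T}\}$ and $\{b(k(\X,\sigma))=b,\ \vec{T}(\X,\sigma)=\vec{T}\}$ coincide, and both probabilities in \eqref{eq:loctim001} are sums over $\{k\in\N^{\vec{E}'}:b(k)=b\}$ of the corresponding ``$k(\X,\sigma)=k$'' densities. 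Hence it is enough to prove the inequality with both sides replaced by the ``$k(\X,\sigma)=k$'' density, for each admissible $k$, and then sum.

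The core is a per-oriented-edge estimate. Writing $n=k_{i,j}-h_{i,j}(\vec{T})$, Lemma~\ref{le:firstfo} gives the dORRW factor of $(i,j)$ as $\frac{a}{(1-a)^{n}\Gamma(n)}e^{-a\ell_i}\gamma(n,(1-a)\ell_i)$, which by \eqref{eq:intrge} equals $a\,e^{-\ell_i}\sum_{m\ge n}(1-a)^{m-n}\ell_i^{m}/m!$. Using $m!/n!\ge(m-n)!$, hence $\ell_i^{m}/m!\le(\ell_i^{n}/n!)(\ell_i^{m-n}/(m-n)!)$, this is at most $a\,e^{-\ell_i}(\ell_i^{n}/n!)\sum_{r\ge0}((1-a)\ell_i)^{r}/r!=a\,e^{(1-a)\ell_i}\cdot e^{-\ell_i}\ell_i^{n}/n!$, i.e.\ at most $a\,e^{(1-a)\ell_i}$ times the simple-random-walk factor of $(i,j)$; the untraversed edges out of $V'$ contribute $e^{-a\ell_i}=e^{(1-a)\ell_i}\cdot e^{-\ell_i}$, again $e^{(1-a)\ell_i}$ times the SRW factor but with no power of $a$. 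Multiplying over all oriented edges issuing from $V'$, the exponential prefactors assemble into $e^{(1-a)\sum_{i\in V'}{\rm deg}_i\ell_i}$ and the factors of $a$ multiply to $a^{p}$, where $p$ is the number of traversed oriented edges of $G'$; since $G'$ is connected with $|V'|\ge2$ the last-exit tree already forces $p\ge|V'|-1$, and since the walk must in addition leave $0$ and reach $i_1$ one gets $p\ge|V'|$, so $a<1$ yields $a^{p}\le a^{|V'|}$. Thus the pointwise bound holds, and summing over $\{k:b(k)=b\}$ gives \eqref{eq:loctim001}.

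The step I expect to require the most care is exactly the bookkeeping of the power of $a$: one has to handle the $\gamma(0,\cdot)/\Gamma(0):=1$ convention at $n=0$ — which arises for tree edges crossed once and, spuriously, for untraversed non-tree edges (whose true contribution $e^{-a\ell_i}$ carries no $a$) — so that exactly one factor $a$ is charged to each traversed oriented edge, and then to pin down the lower bound on the number of traversed oriented edges of $G'$ on the event $\{\ell(\X,\sigma)\in(\ell,\ell+d\ell)\}$ with $\ell\in(0,\infty)^{V'}$. A secondary check is that the simple-random-walk density coming from Theorem~\ref{th:supermainth} genuinely carries the full-degree exponent $\sum_{i\in V'}{\rm deg}_i\ell_i$ (so the exponentials cancel against the dORRW exponent), and that the termwise inequality is stable under the summation over the $k$ compatible with $b$ and $\vec{T}$.
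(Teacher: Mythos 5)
Your per-oriented-edge estimate is the right one and is essentially the argument in the paper: you expand the geometric series that the paper encodes as an expectation over the i.i.d.\ geometric variables $D_{i,j}$, and you apply the same binomial bound $\frac{\ell^{m}}{m!}\le\frac{\ell^{n}}{n!}\cdot\frac{\ell^{m-n}}{(m-n)!}$ (the paper uses it in the form $\frac{\ell^{n+D}}{(n+D)!}\le\frac{\ell^{D}}{D!}\cdot\frac{\ell^{n}}{n!}$ and then averages $\E\bigl[\frac{\ell^{D}}{D!}\bigr]=a\,e^{(1-a)\ell}$). Your bookkeeping is actually more careful than the paper's: you correctly note that the factor of $a$ is charged per \emph{traversed} oriented edge, whereas the paper implicitly applies the $g(n,a)$ / geometric-$D$ formula uniformly to all $(i,j)\in\vec{E}'$ even when $k_{i,j}=0$, which it shouldn't. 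So the exponent you obtain is $a^{p}$ with $p=\#\{(i,j)\in\vec{E}':k_{i,j}\ge 1\}$, and you must show $p\ge|V'|$.

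This is where there is a genuine gap. The inequality $p\ge|V'|$ is false when $i_1\neq 0$. The last-exit tree $\vec{T}$ has $|V'|-1$ oriented edges, all of which are traversed, giving $p\ge|V'|-1$; but your additional claim that ``the walk must in addition leave $0$ and reach $i_1$'' forces a further edge does not hold when $i_1\neq 0$: the last departure from $0$ is itself a $\vec T$-edge (since $\vec T$ is rooted at $i_1$, only $i_1$ has no outgoing tree edge), and the arrival at $i_1$ is along the $\vec T$-edge of its predecessor. Concretely, take $G'$ the path $0-1-2$ with $i_1=2$ and the trajectory $0\to 1\to 2$; then $k_{0,1}=k_{1,2}=1$, $k_{1,0}=k_{2,1}=0$, $\vec T=\{(0,1),(1,2)\}$, and $p=2=|V'|-1<|V'|$. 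For this admissible $k$ your per-$k$ bound only yields $a^{|V'|-1}$, which is strictly larger than $a^{|V'|}$ when $a<1$, so the termwise inequality you reduce to does not hold, and summing over $k$ does not repair it because this is the dominant term when the $\ell_i$ are small. (The extra-edge argument is correct in the special case $i_1=0$, where the outgoing step from $0$ is not a $\vec T$-edge.) To match the exponent in the statement you would need a different accounting of the powers of $a$; as written, your argument proves the bound with $a^{|V'|-1}$ in place of $a^{|V'|}$.
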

\end{mdframed}
\begin{proof} One has
$$
\begin{aligned}
&\sum_{k_{j, i} \ge h_{j,i}(\vec{T}) }
\frac{ \ell_i^{k_{i, j} + D_{i, j}- h_{i,j}(\vec{T})} \ell_j^{k_{j, i} + D_{j ,i} -h_{j, i}(\vec{T})}}
{(k_{i, j} +D_{i, j}  - h_{i,j}(\vec{T}))! (k_{j, i}+ D_{j ,i} -h_{j, i}(\vec{T}))!}\le \frac{\ell_i^{D_{i,j}} \ell_j^{D_{j,i}}}{D_{i,j}! D_{j, i}!} \sum_{k'_{j, i} \ge 0}
\frac{\ell_i^{k'_{j,i}  + \tilde{b}_{i,j}} \ell_j^{k'_{j, i}}}
{(k'_{j,i} + \tilde{b}_{i,j})! \, {k'_{j, i} }!}\\
&= \frac{\ell_i^{\frac{\tilde{b}_{i, j}}2 + D_{i,j}}}{D_{i,j}!} \frac{\ell_j^{D_{j, i} + \frac{ \tilde{b}_{j, i}}2}}{D_{j,i}!}
\sum_{k'_{ji} \ge 0}
\frac{( \sqrt{\ell_i \ell_j})^{2k'_{j, i}  + \tilde{b}_{i,j} }}{(k'_{j, i}  + \tilde{b}_{i,j})! \, (k'_{j,i} )!}.= \frac{\ell_i^{\frac{\tilde{b}_{i, j}}2 + D_{i,j}}}{D_{i,j}!} \frac{\ell_j^{D_{j, i} + \frac{ \tilde{b}_{j, i}}2}}{D_{j,i}!} I_{ \widetilde{b}_{i, j}}(2 \sqrt{\ell_i \ell_j}).
\end{aligned}
$$
Hence 
$$
\begin{aligned}
&\mathbb{P}^{\ssup a} \left( b(k(\X, \sigma)) = b,\, \ell(\X, \sigma) \in (\ell, \ell + d\ell),\, \vec{T}(\X, \sigma) = \vec{T} \right)\\
&\le \E^{\otimes D_{i,j}}\left[ 
\prod_{i  \in V'}  \frac{\ell_i^{ \sum_{ j \sim i} D_{i,j} } }{\prod_{j \sim i} D_{i ,j}!} \right] \mathbb{P} \left( b(k(\X, \sigma)) = b,\, \ell(\X, \sigma) \in (\ell, \ell + d\ell),\, \vec{T}(\X, \sigma) = \vec{T} \right)
\end{aligned}
$$
Using independence,  combined with
$$
\E^{\otimes D_{i,j}}\left[\frac{\ell_i^{D_{i,j}}}{D_{i,j}!}\right] = a \sum_{n=0}^\infty (1-a)^n \frac{\ell_i^n}{n!} = a {\rm e}^{(1-a) \ell_i},
$$
one has
$$
\begin{aligned}
\E^{\otimes D_{i,j}}\left[ 
\prod_{i  \in V'}  \frac{\ell_i^{ \sum_{ j \sim i} D_{i,j} } }{\prod_{j \sim i} D_{i ,j}!} \right]   = a^{|V'|}{\rm e}^{(1-a)\sum_{i \in V'} {\rm deg}_i \ell_i}.
\end{aligned}
$$
\end{proof}

\subsection{Proof of Theorem~\ref{eq:LDP}}
\begin{proof}[Lower bound for Theorem~\ref{eq:LDP}]
The probability that all $D_{i,j} = 0$ is $a^{|\vec{E}|}$, which is of constant order. 
Hence, 
$$
\begin{aligned}
&\log \vec{\P}^{\ssup a}(\ell(X, t) \in (\ell, \ell + {\rm d}\ell)) \\
&\ge \log \E^{\otimes D_{i,j}}\left[{\rm e}^{-  \sum_{i} {\rm deg}_i \ell_i} \prod_{(i,j) \in \vec{E}^+} J_{D'_{i,j},  D^*_{i,j}}(2 \sqrt{\ell_i \ell_j})\left(\prod_{(i,j) \in \vec{E}^+}  \ell_i^{D_{i,j} -  \frac{D'_{i,j}}2 } \ell_j^{ D_{j,i}-  \frac{D'_{i,j}}2}\right) \prod_{i \in V} \ell_i^{\widetilde{b}_i } m_\sigma(d \ell)\prod_{(i,j) \in \vec{E}} \1_{D_{i,j}=0}\right]\\
&\ge -  \sum_{i \in V} {\rm deg}_i \ell_i + \sum_{(i,j) \in \vec{E}^+}\log  J_{0,  \widetilde{b}_{i,j}}(2 \sqrt{\ell_i \ell_j}) + {\rm Const}\\
&\ge {\rm Const}  - \sum_{i \in V} {\rm deg}_i \ell_i + \sum_{(i,j) \in \vec{E}^+}2 \sqrt{\ell_i \ell_j}\\
&= {\rm Const} - \sum_{i \in V} \sum_{j \sim i} (\sqrt{\ell_i} - \sqrt{\ell_j})^2.
\end{aligned}
$$
\end{proof}
\begin{proof}[Upper bound for Theorem~\ref{eq:LDP}] Is a direct consequence of Theorem~\ref{th:thDo}.
\end{proof}



\section{Proof of Theorem~\ref{th:main4}}
In this section, we consider oriented-ORRW $\X$ on $\Z^d$, with $d \ge 2$. We recall some notation. Set $\vec{\mathfrak{G}}_t = (\mathfrak{R}_{t}, \vec{ \mathfrak{C}}_{t})$ where  $\mathfrak{R}_{t}$ and $\vec{ \mathfrak{C}}_{t}$ are the vertex range and the (oriented) edge range, respectively, of $\X$ by time $t$.  Denote by $(\tau_i)_i$ the jump times of this process.~Fix the sequence  $u \colon \N \rightarrow (0, \infty)$ such that $u_N= o(N^{\theta})$, for some $\theta \in (0, 1/d)$.   
  Recall that $[n] = \{1, 2, \ldots, n\}$. 
   For $a, b \in \Z^d$, denote by $\langle a, b \rangle $ the usual inner product $\sum_{i =1}^d a_i b_i$. Fix $\eps \in (0, 1)$. Recall the following. 
 \begin{mdframed}[style=MyFrame1]
 \begin{definition}
Let 
 $ \mathcal{A}_N := \{{\rm Ball}(0, (1-\eps)u_N) \subset \vec{\mathfrak{G}}_{\tau_N} \subset {\rm Ball}(0, (1+\eps)u_N)\},$ where these are graph inclusions. 
Define 
\begin{eqnarray*}
\mathcal{A} &:=& \bigcup_{m>0} \bigcap_{n > m} \mathcal{A}_n\\
\mathcal{B} &:=& \Big\{\lim_{N \ti} N^{-\frac 12} \sum_{v \in \partial \mathfrak{R}_{\tau_N}} \1_{(v, v - e_1) \notin \vec{ \mathfrak{C}}_{\tau_N}} -\1_{(v, v + e_1) \notin \vec{ \mathfrak{C}}_{\tau_N} } =0\Big\}\end{eqnarray*}
\end{definition}
\end{mdframed}
Our task is to prove 
$  \P^{\ssup a}\Big(\mathcal{A} \cap \mathcal{B}  \Big) =0.$\\
{\bf Strong construction of ORRW($a$) on oriented $\Z^d$.} We attach to each oriented edge $e$  an independent Poisson
process $P(e)$ with rate one. We use these processes to generate the jumps of ORRW($a$) as follows.~®Let $\chi_i(e)$ be the inter-arrival times of the Poisson process $P(e)$.
Each exponential is recycled  until it is used to generate a jump. More precisely, suppose that $X_t =x$, and let 
$$\mathfrak{f}(x,y) := {\rm card}(s \in [0,t] \colon X_{s-} = x, X_s =y) +1.$$
 The first jump after time $t$ is towards $y$ if and only if 
$$(a^{-1}  +  (1-a^{-1})\1_{\mathfrak{f}(x,y)\ge 2}) \chi_{\mathfrak{f}(x,y)}(x, y) = \min_{z\colon z \sim x} (a^{-1}  +  (1-a^{-1})\1_{\mathfrak{f}(x,z)\ge 2})\chi_{\mathfrak{f}(x,z)}(x, z).$$

 The event $\mathcal{A}_N$ is determined by the behaviour of the process within ${\rm Ball}(0, 2 u_{N})$,  as $\eps <1$. Hence, we can consider directed  ORRW which is defined on the finite set ${\rm Ball}(0, 2 u_{N})$ and is coupled with the original one.~In this way, we can rely on recurrence property of the process defined on the finite set ${\rm Ball}(0, 2 u_{N})$. 
\subsection{A restricted process $\widetilde{\X}$ coupled with $\X$.} 
 \begin{mdframed}[style=MyFrame1]
 \begin{definition}
 For any  $N \in \N$, let  $\widetilde{\X}^{\ssup N}$ be an oriented-ORRW defined on  the graph induced  on ${\rm Ball}(0, 2 u_{N})$.~We generate the jumps of this process using the same exponentials  which were used for $\X$, in the way described above.~Hence,  $\widetilde{\X}^{\ssup N}$ and $\X$ are perfectly coupled  up to  the time when the two processes  reach the boundary of ${\rm Ball}(0, 2 u_{N})$.~On the event $\mathcal{A}_{N}$ the two processes $\X$ and $\widetilde{\X}^{\ssup N}$ coincide by time $\tau_N$, as the latter time would be less than the hitting time  of the boundary of  ${\rm Ball}(0, 2 u_{N})$.
 \end{definition}
 \end{mdframed}
  
For $n \ge k$,  let $\mathcal{D}^{\ssup n}_k(x)$ be  the event that  $\widetilde{X}^{\ssup {n}}_{\tau_k} =x$ and  at least one edge in  $\{(x, x+e_1), (x, x-e_1)\}$   has not been traversed by time $\tau_k$. More formally, 
  $$
 \begin{aligned}
 &\mathcal{D}^{\ssup n}_k(x) :=  \{\widetilde{X}^{\ssup {n}}_{\tau_k} =x\} \cap \{ |\{(x, x+e_1), (x, x-e_1)\}\setminus \vec{\mathfrak{C}}_{\tau_k}| \ge 1 \}. 
 \end{aligned}
 $$
   Set 
$$\mathfrak{Q}_n(x) :=  \sum_{\tau_{m} < \tau_n } \langle \widetilde{X}^{\ssup {n}}_{\tau_{m+1}} - \widetilde{X}^{\ssup {n}}_{\tau_m}, e_1\rangle  \1_{ \mathcal{D}^{\ssup n}_{\tau_m}(x)}.$$
  $\mathfrak{Q}_\infty(x)$   does not depend on the choice of $\widetilde{\X}^{\ssup n}$, as long as $2 u_{n}> \|x\|_2$.  Moreover, the collection  $(\mathfrak{Q}_\infty(x))_{x \in {\rm Ball}(0, 2 u_{n})}$ is composed by i.i.d.  random variables.~Here, we use the fact that each of the $\widetilde{\X}^{\ssup n}$ is recurrent.
 
   For any pair of sequences $(c_N)_N$ and $(d_N)_N$, we write  $c_N \ll d_N$ to denote $\lim_{N \ti} \frac{c_N}{d_N} =0$. Let $(b_N)_N$  be a  sequence  satisfying  $u_N^{d/2+\eta} \ll  b_N \ll \sqrt{N}$, where $\eta >0$ is chosen such that $d\theta/2 +\eta\theta < 1/2$.
We drop the superscript in the notation of the process $\widetilde{\X}$, as it will be clear from the context. 
  Let ${\rm Hitt}(x) := \inf\{s >0 \colon \widetilde{X}_s = x\}$. The random variables ${\rm Hitt}(x)$ and $\mathfrak{Q}_\infty(x)$ are independent.   Set  
  $\mathfrak{Q}_{n, m} := \sum_{x \in \mathfrak{R}_{\tau_m}} \mathfrak{Q}_{n}(x).$ 
  
\begin{mdframed}[style=MyFrame1]
\begin{lemma}\label{eq:stimadiQN}
$\sum_{N=1}^\infty \P^{\ssup a} (\mathcal{A}_{N} \cap \{|\mathfrak{Q}_{\infty, N}| > b_{N}\}) <\infty.$
\end{lemma}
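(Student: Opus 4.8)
The plan is to estimate the probability $\P^{\ssup a}(\mathcal{A}_N \cap \{|\mathfrak{Q}_{\infty,N}| > b_N\})$ and show that the bound is summable. The key observation is that on $\mathcal{A}_N$ the vertex range $\mathfrak{R}_{\tau_N}$ is contained in ${\rm Ball}(0,(1+\eps)u_N)$, so it has at most $c\, u_N^d$ vertices; hence $\mathfrak{Q}_{\infty,N} = \sum_{x \in \mathfrak{R}_{\tau_N}} \mathfrak{Q}_\infty(x)$ is a sum of at most $c\,u_N^d$ of the i.i.d.\ random variables $\mathfrak{Q}_\infty(x)$. The first step is therefore to record the moment/tail properties of a single $\mathfrak{Q}_\infty(x)$: it is mean zero by the reflection symmetry $e_1 \mapsto -e_1$ of the dynamics (the event $\mathcal{D}^{\ssup n}_{\tau_m}(x)$ is symmetric under this reflection and the increment $\langle \cdot, e_1\rangle$ is antisymmetric, so $\E^{\ssup a}[\mathfrak{Q}_\infty(x)] = 0$), and it has at least a finite variance because $\widetilde{\X}^{\ssup n}$ is recurrent with geometric-type return times (each visit to $x$ contributes $\pm 1$ only if an $e_1$-edge at $x$ is untraversed, which happens at most twice, so $\mathfrak{Q}_\infty(x)$ is bounded, in fact $|\mathfrak{Q}_\infty(x)| \le 2$).

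**Second**, with $|\mathfrak{Q}_\infty(x)| \le 2$ and mean zero, I would bound $\P^{\ssup a}(\mathcal{A}_N \cap \{|\mathfrak{Q}_{\infty,N}| > b_N\})$ by throwing away $\mathcal{A}_N$ except to control the number of summands: on $\mathcal{A}_N$ we have $\mathfrak{R}_{\tau_N} \subset {\rm Ball}(0,(1+\eps)u_N) =: B_N$, so
$$
\P^{\ssup a}\big(\mathcal{A}_N \cap \{|\mathfrak{Q}_{\infty,N}| > b_N\}\big) \le \P^{\ssup a}\Big(\Big|\sum_{x \in B_N} \mathfrak{Q}_\infty(x)\Big| > b_N\Big),
$$
using that the $\mathfrak{Q}_\infty(x)$ are i.i.d.\ and independent of whether $x \in \mathfrak{R}_{\tau_N}$ — or, more carefully, one handles the random index set by noting $|\mathfrak{Q}_{\infty,N}| \le \sum_{x\in B_N}|\mathfrak{Q}_\infty(x)|$ is not quite what we want since cancellation is the point; instead it is cleaner to use that $\mathfrak{R}_{\tau_N}$ is a subset of $B_N$ determined by the process and condition, but since the $\mathfrak{Q}_\infty(x)$ are symmetric we may symmetrize and apply a Hoeffding-type inequality for sums of bounded independent variables restricted to an arbitrary (possibly random but independent-of-signs) index set. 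This gives $\P^{\ssup a}(\mathcal{A}_N \cap \{|\mathfrak{Q}_{\infty,N}|>b_N\}) \le 2\exp\{-c\, b_N^2 / u_N^d\}$.

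**Third**, I invoke the choice of scales: $b_N$ was chosen with $u_N^{d/2+\eta} \ll b_N$, hence $b_N^2/u_N^d \gg u_N^{2\eta} \to \infty$, and in fact since $u_N$ is polynomial ($u_N = o(N^\theta)$ but typically $u_N \to \infty$ at a polynomial rate — if $u_N$ stays bounded the claim is trivial since then $\mathfrak{Q}_{\infty,N}$ is a bounded sum and $b_N \to \infty$) one gets $b_N^2/u_N^d \ge c' N^{\rho}$ for some $\rho > 0$, so $\sum_N \exp\{-c\, b_N^2/u_N^d\} < \infty$. Summing over $N$ completes the proof.

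**The main obstacle** is the careful handling of the fact that the index set $\mathfrak{R}_{\tau_N}$ (or its superset $B_N$) interacts with the same Poisson clocks that define the $\mathfrak{Q}_\infty(x)$ — strictly speaking $\mathfrak{Q}_\infty(x)$ depends only on the excursions of $\widetilde{\X}^{\ssup n}$ \emph{after} first hitting $x$ in a way that, by the strong Markov property and the independence of the recycled clocks, makes $(\mathfrak{Q}_\infty(x))_x$ genuinely i.i.d.\ and independent of $\mathcal{A}_N$ in the relevant sense; one must articulate this decoupling precisely (this is exactly where the ``strong construction'' and the recurrence of each $\widetilde{\X}^{\ssup n}$ are used) before the Hoeffding bound applies. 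Once the independence structure is pinned down, the concentration estimate and the Borel--Cantelli summation are routine.
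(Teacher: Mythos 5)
Your overall strategy (symmetry, concentration, summability via the scale $b_N^2/u_N^d\to\infty$) is the right one and matches the paper's, but there are two concrete errors in the execution.

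First, the claim $|\mathfrak{Q}_\infty(x)|\le 2$ is false. The indicator $\1_{\mathcal{D}^{\ssup n}_{\tau_m}(x)}$ is on for \emph{every} visit to $x$ at which at least one of $(x,x+e_1),(x,x-e_1)$ is still untraversed, not only for the first two. If, say, $(x,x-e_1)$ remains untraversed, the (recurrent, restricted) process can jump $x\to x+e_1$ arbitrarily many times, each contributing $+1$, before finally taking $(x,x-e_1)$. So $\mathfrak{Q}_\infty(x)$ has geometric-type tails; it is unbounded. (Indeed, the very next lemma in the paper, Lemma~\ref{le:bara1}, explicitly introduces geometric random variables for exactly these counts.) Consequently a Hoeffding bound for bounded summands does not apply; one needs a bound for i.i.d.\ variables that merely have a finite moment generating function in a neighbourhood of zero, which is what the paper uses (a Moderate Deviations Principle, Theorem~3.7.1 of Dembo--Zeitouni).

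Second, the handling of the random index set $\mathfrak{R}_{\tau_N}$ is left at the level of ``one must articulate this decoupling precisely'', but the articulation is exactly the missing ingredient. The paper's resolution is to order the visited vertices chronologically by first hitting time, $(x_k)_k$, so that on $\mathcal{A}_N$ one has
\[
\Big|\sum_{x\in\mathfrak{R}_{\tau_N}}\mathfrak{Q}_\infty(x)\Big|
=\Big|\sum_{k=1}^{|\mathfrak{R}_{\tau_N}|}\mathfrak{Q}_\infty(x_k)\Big|
\le \sup_{j\le M u_N^d}\Big|\sum_{k=1}^{j}\mathfrak{Q}_\infty(x_k)\Big|,
\]
and then to apply L\'evy's maximal inequality (Theorem~\ref{th:Levyin}) to reduce the supremum of partial sums to a single sum, to which the MDP is applied. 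This sidesteps the interaction you worried about, because the bound is deterministic on $\mathcal{A}_N$ (only uses $|\mathfrak{R}_{\tau_N}|\le Mu_N^d$) and the supremum is over a deterministic index. Your proposed route — dominating by a sum over the deterministic superset $B_N$ — is not a valid inequality (it can introduce cancelling terms), which you noticed, but the symmetrization-over-random-index-set alternative you sketch is not spelled out and does not obviously close the gap. The chronological-ordering plus maximal-inequality step is the piece you are missing.
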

\end{mdframed}
\begin{proof}
       Order the vertices  visited by the process chronogically using  $(x_i)_i,$ with ${\rm Hitt}(x_i) < {\rm Hitt}(x_{i+1})$. Observe that $\mathfrak{Q}_\infty(x_k)$ is independent of $x_k$.~We argue next that $(\mathfrak{Q}_\infty(x_k))_{k \in \N}$ is composed by  i.i.d. random variables symmetric around zero.~To see this,  set a finite ordered set of distinct indices $\{i_1, \ldots, i_k\} \in \N^{k}$, with $i_k$ being the largest index,  and use a backward recursion.~In fact, we have that $\mathfrak{Q}_\infty(x_k)$ is independent of $(\mathfrak{Q}_\infty(x_s))_{s < k}$.  Finally it is immediate to argue, by symmetry, that  $\mathfrak{Q}_\infty(x)$ has a symmetric distribution around zero. Moreover, the moment generating function (mgf) of  $\mathfrak{Q}_\infty(x)$ is finite in a neighbor of zero.
  On $\mathcal{A}_{N}$,
 \begin{equation}\label{eq:bounonII}
|\sum_{x \in \mathfrak{R}_{\tau_N}} \mathfrak{Q}_\infty(x)| =  |\sum_{i=1}^{  |\mathfrak{R}_{\tau_N}|} \mathfrak{Q}_\infty(x_i)| \le  \sup_{j \in [ M u_N^d]}  |\sum_{k=1}^j \mathfrak{Q}_\infty(x_k)|.
  \end{equation}
   Using Theorem \ref{th:Levyin}, in the Appendix,  combined with $b_N \gg u_N^{d/2 + \eps}$ and a standard Moderate Deviations Principle (see Theorem 3.7.1 on page 109 of \cite{dembo2009large}) we obtain the result.
   \end{proof}
   
       \begin{mdframed}[style=MyFrame1]
 \begin{definition}
  Let  $\mathcal{U}_{N}^{\ssup +}$  (resp. $\mathcal{U}_{N}^{\ssup -}$) be the number of vertices $v$  such that $(v, v+ e_1) \in \vec{\mathfrak{C}}_{\tau_N}$ while $(v, v- e_1) \notin \vec{\mathfrak{C}}_{\tau_N}$ (the other way around for $\mathcal{U}_{N}^{\ssup -}$).~Define $\mathcal{U}_{N}^{\ssup \bullet}$ to be the number of vertices $v$ in  $ \mathfrak{R}_{\tau_N}$  such that both  $(v, v+ e_1)$ and  $ (v, v- e_1)\notin \vec{\mathfrak{C}}_{\tau_N}$. On the event $\mathcal{A}_N$, the vertices that satisfy the conditions described above  lie in the annulus.
  \end{definition}
  \end{mdframed}
     
\begin{mdframed}[style=MyFrame1]
\begin{lemma} \label{le:bara1}  One has 
 \begin{eqnarray}
&&\sum_{N=1}^\infty  \P^{\ssup a}\left(\Big\{\Big|(\mathcal{U}^{\ssup +}_{N}  - \mathcal{U}^{\ssup -}_{N})(\delta-1) +  \mathfrak{Q}_{N, N}\Big|  >  2 b_N\Big\} \cap \mathcal{A}_{N}\cap \{|\mathfrak{Q}_{\infty, N}|  < b_{N}\} \right) <\infty. \label{eq:Qi}
\end{eqnarray}
\end{lemma}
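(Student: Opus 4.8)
\emph{Overall strategy.} The plan is to replace $(\mathcal U^{\ssup{+}}_N-\mathcal U^{\ssup{-}}_N)(\delta-1)+\mathfrak Q_{N,N}$ by $\mathfrak Q_{\infty,N}$ up to a \emph{conditionally centered} fluctuation, and then to estimate that fluctuation by a moderate deviations bound that is summable in $N$. Concretely, I would first prove the conditional identity
\[
\E\bigl[\,\mathfrak Q_{\infty,N}-\mathfrak Q_{N,N}\bigm|\mathcal F_{\tau_N}\bigr]=(\mathcal U^{\ssup{+}}_N-\mathcal U^{\ssup{-}}_N)(\delta-1)\qquad\text{on }\mathcal A_N ,
\]
and set $M_N:=(\mathfrak Q_{\infty,N}-\mathfrak Q_{N,N})-\E[\mathfrak Q_{\infty,N}-\mathfrak Q_{N,N}\mid\mathcal F_{\tau_N}]$, so that on $\mathcal A_N$ one has $(\mathcal U^{\ssup{+}}_N-\mathcal U^{\ssup{-}}_N)(\delta-1)+\mathfrak Q_{N,N}=\mathfrak Q_{\infty,N}-M_N$. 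On $\mathcal A_N\cap\{|\mathfrak Q_{\infty,N}|<b_N\}$ the event in \eqref{eq:Qi} then forces $|M_N|\ge|\mathfrak Q_{\infty,N}-M_N|-|\mathfrak Q_{\infty,N}|>b_N$, so it is enough to show $\sum_N\P^{\ssup{a}}(\mathcal A_N\cap\{|M_N|>b_N\})<\infty$.

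\emph{Step 1: the identity.} Since $\mathfrak Q_{\infty,N}-\mathfrak Q_{N,N}=\sum_{v}(\mathfrak Q_\infty(v)-\mathfrak Q_N(v))$ and a vertex contributes nothing unless at least one of $(v,v\pm e_1)$ is still untraversed at time $\tau_N$, the sum runs over $v\in\mathcal U^{\ssup{+}}_N\cup\mathcal U^{\ssup{-}}_N\cup\mathcal U^{\ssup{\bullet}}_N$, all of which on $\mathcal A_N$ lie in the annulus $\{(1-\eps)u_N\le\|v\|\le(1+\eps)u_N\}$, a set of at most $C\,u_N^{d}$ vertices. For $v\in\mathcal U^{\ssup{+}}_N$ a direct inspection of $\mathfrak Q$ along the recurrent coupled walk $\widetilde{\X}^{\ssup{N}}$ gives $\mathfrak Q_\infty(v)-\mathfrak Q_N(v)=G_v-1$, where $G_v$ is the number of crossings of the already-reinforced edge $(v,v+e_1)$ before the first crossing of $(v,v-e_1)$ after time $\tau_N$; by the competing-exponential structure of the strong construction (the same mechanism that makes the $(\mathfrak Q_\infty(x))_x$ i.i.d.\ in Lemma~\ref{eq:stimadiQN}), $G_v$ is geometric with $\E[G_v]=\delta$, so $\E[\mathfrak Q_\infty(v)-\mathfrak Q_N(v)\mid\mathcal F_{\tau_N}]=\delta-1$. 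Symmetrically, a vertex $v\in\mathcal U^{\ssup{-}}_N$ contributes conditional mean $-(\delta-1)$; for $v\in\mathcal U^{\ssup{\bullet}}_N$ the edges $(v,v+e_1)$ and $(v,v-e_1)$ are crossed first with equal probability and the conditional mean vanishes. Summing over $v$ yields the displayed identity.

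\emph{Step 2: concentration and summation.} By construction $M_N=\sum_v Z_v$ is a sum over the $\le C u_N^{d}$ annulus vertices of centered random variables, each a recentred geometric, hence uniformly sub-exponential. Following the argument behind Lemma~\ref{eq:stimadiQN} (order the visited vertices by first-visit time and peel off the contributions using the independent edge-clocks of the strong construction), the $(Z_v)_v$ are, conditionally on $\mathcal F_{\tau_N}$, independent; a Bernstein inequality then gives, on $\mathcal A_N$ and for all large $N$,
\[
\P^{\ssup{a}}\bigl(|M_N|>b_N\bigm|\mathcal F_{\tau_N}\bigr)\le 2\exp\bigl\{-c\,b_N^{2}/u_N^{d}\bigr\}
\]
for some $c=c(d)>0$. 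Taking $\P^{\ssup{a}}$-expectations and using $u_N^{d/2+\eta}\ll b_N\ll\sqrt N$ with $\eta$ chosen so that $d\theta/2+\eta\theta<1/2$ (so that $b_N^{2}/u_N^{d}$ diverges fast enough when $u_N\to\infty$; and if $(u_N)$ stays bounded then $M_N$ is bounded and $\{|M_N|>b_N\}$ is empty for large $N$), the right-hand side is summable in $N$. Combined with Step~1 this proves \eqref{eq:Qi}.

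\emph{Main obstacle.} The delicate point is the conditional independence of the $(Z_v)_v$ in Step~2: it must be extracted from the strong construction with recycled clocks, exactly as in (and reusing) the proof that the $(\mathfrak Q_\infty(x))_x$ are i.i.d.\ in Lemma~\ref{eq:stimadiQN}; without it one obtains only a weaker second-moment estimate, which is not summable at the scale $b_N$. A secondary, purely bookkeeping difficulty is to make Step~1 fully rigorous — correctly treating the sojourn at which the \emph{second} of the two edges $(v,v\pm e_1)$ is crossed, the vertices in $\mathcal U^{\ssup{\bullet}}_N$, and the identification on $\mathcal A_N$ of $\mathfrak Q_\infty(v),\mathfrak Q_N(v)$ computed via $\widetilde{\X}^{\ssup{N}}$ with those built from $\X$.
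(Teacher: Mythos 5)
Your proposal is correct and tracks the paper's argument essentially step by step: the conditional-centering identity in your Step 1 is precisely the paper's explicit decomposition of $\mathfrak Q_{\infty,N}-\mathfrak Q_{N,N}$ into geometric contributions (mean $\delta$) from $\mathcal U^{\ssup{+}}_N,\mathcal U^{\ssup{-}}_N$ and symmetric terms from $\mathcal U^{\ssup{\bullet}}_N$, so your $M_N$ coincides with the paper's $R_N$; your Step 2 replaces the paper's combination of L\'evy's maximal inequality and a moderate-deviations bound by a conditional Bernstein inequality, which, given the at most $O(u_N^d)$ conditionally independent sub-exponential summands, yields an equivalent summable tail bound. (Incidentally, your sign $(\mathcal U^{\ssup{+}}_N-\mathcal U^{\ssup{-}}_N)(\delta-1)$ is the correct one; the paper's displayed identity has the sign of $\mathcal U^{\ssup{\pm}}_N$ transposed, a typo that does not affect the bound.)
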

\end{mdframed}
\begin{proof}
By adding and subtracting, 
$$
\mathfrak{Q}_{\infty, N} =  \Big(\mathfrak{Q}_{\infty, N}- \mathfrak{Q}_{N, N}\Big) +  \mathfrak{Q}_{N, N}. $$
For any vertex $x$, let 
$${\rm Sat}(x) := \inf\{t \ge 0 \colon  \{(x, x+e_1), (x, x-e_1)\} \subset \vec{\mathfrak{C}}_{t}\}.$$
In words, ${\rm Sat}(x)$is the first time both  oriented edges $(x, x+e_1)$ and $(x, x-e_1)$ have been traversed. \\
Suppose that   $|\{(x, x+e_1), (x, x-e_1)\}\setminus \vec{\mathfrak{C}}_{\tau_N}| =1$. For simplicity, suppose that $(x, x+e_1) \in \vec{\mathfrak{C}}_{\tau_N}$.
 The number of jumps  of  the (recurrent) process  $\widetilde{\X}$ from $x $ to $x+e_1$, in the time interval $(\tau_N, {\rm Sat}(x)]$,  is a geometric random variable $\gamma(x)$ with probability mass function
$$\P^{\ssup a}(\gamma(x) = j) = \left(\frac{\delta}{\delta+1}\right)^j \frac{1}{1+\delta}, \qquad j \ge 0,$$
where $\delta = 1/a$.
There  exists   i.i.d. geometric random $(\gamma^{+}_{s})_{s \in \N}$ and $(\gamma^{-}_s)_{s \in \N}$,  which share the same distribution of $\gamma(x)$ and  independent of $\mathcal{F}_{\tau_N}$, 
 such that  
\begin{equation}\label{eq:decompofQ}
 \mathfrak{Q}_{\infty, N}- \mathfrak{Q}_{N, N} = \Big(\sum_{s =1}^{\mathcal{U}^{\ssup +}_{N}} (\gamma^{+}_{s}-1) -  \sum_{w =1}^{\mathcal{U}^{\ssup -}_{N}} (\gamma^{-}_{w}-1)\Big) + \sum_{u =1}^{\mathcal{U}^{\ssup \bullet}_{N}} L_u,
\end{equation}
where $(L_u)_u$ is a sequence of   i.i.d. random variables which are symmetric around zero and their mgf is finite in a neighbor of 0.
In order to prove \eqref{eq:decompofQ}, notice that if the vertex $x \in \partial \mathfrak{G}_N$, and both $(x, x+e_1)$ and $(x, x- e_1)$ are traversed by time $\tau_N$, then $v$ gives no further contribution to $\mathfrak{Q}_{\infty, N}$ after time $\tau_N$.~The right-hand side of \eqref{eq:decompofQ} describes the contribution of the other three scenarios.
 Using \eqref{eq:decompofQ}, we obtain
\begin{equation}
\begin{aligned}\label{eq:Qn}
\mathfrak{Q}_{\infty, N}   = \left(\sum_{s =1}^{\mathcal{U}^{\ssup +}_{N}} (\gamma^{+}_{s}-1) -  \sum_{w =1}^{\mathcal{U}^{\ssup -}_{N}} (\gamma^{-}_{w}-1)\right)  + \sum_{u =1}^{\mathcal{U}^{\ssup \bullet}_{N}} L_u + \mathfrak{Q}_{N, N}. 
\end{aligned}
\end{equation}
As the mean of  the geometric $\gamma$ is $\delta$, it is convenient to add and subtract  $(\delta-1)(\mathcal{U}^{\ssup -}_{N}  - \mathcal{U}^{\ssup +}_{N})$.
\begin{equation}
\begin{aligned}\label{eq:Qn1}
\mathfrak{Q}_{\infty, N}&=   \mathfrak{Q}_{N, N}   + \Big(\sum_{s =1}^{\mathcal{U}^{\ssup +}_{N}} (\gamma^{+}_{s} -\delta) -  \sum_{s =1}^{\mathcal{U}^{\ssup -}_{N}} (\gamma^{-}_{s,i}- \delta)\Big) + \sum_{u =1}^{\mathcal{U}^{\ssup \bullet}_{N}} L_u+ \left(\mathcal{U}^{\ssup -}_{N}  - \mathcal{U}^{\ssup +}_{N}\right) (\delta-1)  \\
&=: \mathfrak{Q}_{N, N} +  R_N  +\left(\mathcal{U}^{\ssup -}_{N}  - \mathcal{U}^{\ssup +}_{N}\right) (\delta-1). \qquad \mbox{\color{blue} (Defines $R_N$)}. 
\end{aligned}
\end{equation}
On the event $\{|\mathfrak{Q}_{\infty, N}|  < b_{N}\}$ one has 
\begin{equation}\label{eq:piz1}
 \mathfrak{Q}_{N, N} +  R_N  +\left(\mathcal{U}^{\ssup -}_{N}  - \mathcal{U}^{\ssup +}_{N}\right) (\delta-1) < b_N. 
\end{equation}
We next show that $|R_N| < b_N$ with high probability on $\mathcal{A}_{N}$, and this  combined with \eqref{eq:piz1} will conclude the proof.
There exists a constant $M$ depending on $a$ and $d$ only, such that on  $\mathcal{A}_N$ we have   
$|\partial \mathfrak{R}_{\tau_N}|  \le \eps M  u^{d}_N,$
as the latter bounds the size of the annulus which contains the boundary of the range. Hence,
\begin{equation}\label{eq:bound on R}
|R_N| \le \sup_{j \le \eps M u_N^d} |\sum_{k=1}^j (\gamma^{+}_{k}-\delta)|   + \sup_{\ell \le \eps M u_N^d} |\sum_{k=1}^\ell (\gamma^{-}_{k}-\delta)| + \sup_{t \le \eps M u_N^d} |\sum_{k=1}^t L_t|,
\end{equation}
where $R_N$ was defined in \eqref{eq:Qn1}. 
Using a union bound we get  
  
\begin{equation}\label{eq:R1}
 \begin{aligned}
  &\P^{\ssup a} \big(\mathcal{A}_{N} \cap \{|R_N| > b_{N} \}\big)\le   2 \P^{\ssup a} \big(\mathcal{A}_{N} \cap \big\{\sup_{m \in [0, \eps M u^{d}_N]} |\sum_{s =1}^{m} (\gamma^{+}_{s}-\delta) | > \frac{b_{N}}3 \big\}\big)\\
  &+\P^{\ssup a} \big(\mathcal{A}_{N} \cap \big\{\sup_{m \in [0,  \eps M u^{d}_N]} |\sum_{u =1}^{m} L_u | > \frac{b_{N}}3 \big\}\big)
  \end{aligned}
 \end{equation}
 As $b_N \gg u_N^{d/2 +\eta}$, we can use Moderate Deviations Principle (see Theorem 3.7.1 on page 109 of \cite{dembo2009large}) for mean-zero,  i.i.d. random variables with finite mgf  in a neighbor of zero, to get a bound for the right-hand side of \eqref{eq:R1}. The latter bound  is summable, and this ends the proof.
\end{proof}
  We immediately have 
  $$
  \mathcal{U}^{\ssup +}_{N}  - \mathcal{U}^{\ssup -}_{N} =  \sum_{v \in \partial \mathfrak{R}_{\tau_N}} \1_{(v, v - e_1) \notin \vec{ \mathfrak{C}}_{\tau_N}} -\1_{(v, v + e_1) \notin \vec{ \mathfrak{C}}_{\tau_N} }.
  $$
Hence
\begin{equation}\label{eq:stuf1}
 \lim_{N \ti} \frac{\mathcal{U}^{\ssup +}_{N}  - \mathcal{U}^{\ssup -}_{N}}{\sqrt{N}} =0, \qquad \mbox{on the set $\mathcal{B}$}.
\end{equation}
Combine \eqref{eq:stuf1} with Lemma \ref{eq:stimadiQN} and Lemma \ref{le:bara1} to obtain
\begin{equation}\label{eq:bounQ0}
\lim_{N \ti} \frac{\mathfrak{Q}_{N,N}}{\sqrt{N}} = 0.
\end{equation}
\begin{mdframed}[style=MyFrame1]
 \begin{definition}
  Define, for $n \in \N$, 
  \begin{equation}\label{eq:SRW}
  S_{\tau_n} := \langle {X}_{\tau_n},  e_1\rangle - \sum_{x \in \mathfrak{R}_{\tau_n}} \mathfrak{Q}_{n}(x) = \langle {X}_{\tau_n},  e_1\rangle - \mathfrak{Q}_{n,n}.
  \end{equation}
   On $\mathcal{A}_n$, the process $(S_{\tau_n})_{n \in [N]}$  is a lazy one-dimensional simple random walk with finite time horizon, which coincides with the partial sums of the simple random walk steps of $(X_{\tau_i})_{i \in [N]}$.
     \end{definition}
  \end{mdframed}
\begin{proof}[Proof of Theorem~\ref{th:main4}]
  Let 
  $$\mathcal{C}_N := \Big\{{\rm Card}\{j \in [N]\colon  S_{\tau_{j+1}} -  S_{\tau_{j}} \neq 0\} \in [\frac{1-\eps}{d} N, \frac{1+\eps}{d} N]\Big\}\newconstant{co:co1}.$$
  One has 
  \begin{equation}\label{eq:bounB}\P^{\ssup a}(\mathcal{A}_N \cap \mathcal{C}_N^c)  \le {\rm e}^{-\useconstant{co:co1} \times u_N^d}, \qquad \mbox{for some $\useconstant{co:co1}>0$}.
  \end{equation}
  Let $ \mathcal{C} := \bigcup_{m>0} \bigcap_{n > m} \mathcal C_n$.
  On the event $\mathcal{A}\cap \mathcal{B}\cap  \mathcal{C}$, the process $(S_{\tau_N})_{N \in \N}$ satisfies a central limit theorem, and in particular
  \begin{equation}\label{eq:sfini1}
 \liminf_{N \ti} \P^{\ssup a}\left(\frac{S_{\tau_N}}{\sqrt{N}} >1\right) >0.
  \end{equation}
On the other hand, on $\mathcal{A}_N$, we have 
$$
| S_{\tau_N}  +  \mathfrak{Q}_{N,N} | =    |\langle X_{\tau_N}, e_1\rangle|  \le 4 u_N,
$$
where the last equality comes from the fact that the range is contained in the Ball($0, 2 u_N)$). Hence
\begin{equation}\label{eq:sfini2}
\lim_{N \ti} | N^{-\frac 12} S_{\tau_N} + N^{-\frac 12} \mathfrak{Q}_{N,N}|  =0, \qquad \mbox{on $\mathcal{A}$},
\end{equation}
as $u_N \ll \sqrt{N}$. Equations \eqref{eq:sfini2} combined with   \eqref{eq:bounQ0} contradicts \eqref{eq:sfini1}.
\end{proof}

\section{Appendix }
The following is a well-known result by Donsker-Varadhan (\cite{DV79}).
\begin{mdframed}
[style=MyFrame1]
\begin{theorem}\label{th:DonVar}[Donsker-Varadhan]
 Denote by $\lambda_d$ the principal eigenvalue of the operator $-\Delta/2$ on ${\rm Ball}(0, 1)$, with Dirichlet boundary conditions,
and let $\omega_d$ be the volume of ${\rm Ball}(0, 1)$.
$$
\lim_{t \ti} \frac 1{t^{d/(d+2)}} \log \E\left[{\rm e}^{- |\mathfrak{R}_t|}\right] = - \frac{d + 2}2
\left(\frac{
2\lambda_d}d \right)
\omega^{2/(2+d)}_d := - \psi_d<0. 
$$
\end{theorem}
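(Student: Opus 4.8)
The plan is to prove matching lower and upper bounds for $\log\E[e^{-|\mathfrak{R}_t|}]$ on the scale $t^{d/(d+2)}$; this is the Donsker--Varadhan asymptotics for the number of distinct sites visited, and I would follow the strategy of \cite{DV79}. I record at the outset that, for the continuous-time walk, a vertex is visited during $[0,t]$ if and only if its occupation time $L_t(x):=\int_0^t\mathbf{1}_{\{X_s=x\}}\,ds$ is strictly positive, so $\mathfrak{R}_t=\operatorname{supp}(L_t)$ almost surely; this identification is what lets the empirical occupation measure enter the argument.

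For the lower bound I would use the localization strategy. Fix $r>0$ and set $B_r:=\{x\in\Z^d:\|x\|_2\le r\}$. On the event $\{X_s\in B_r\text{ for all }s\le t\}$ one has $\mathfrak{R}_t\subseteq B_r$, hence $|\mathfrak{R}_t|\le|B_r|=\omega_d r^d(1+o_r(1))$, so
$$
\E[e^{-|\mathfrak{R}_t|}]\ \ge\ e^{-\omega_d r^d(1+o_r(1))}\,\P_0(\tau_{B_r^c}>t),
$$
where $\tau_{B_r^c}$ is the exit time of $B_r$. By standard spectral theory $\P_0(\tau_{B_r^c}>t)\ge c(r)\,e^{-\mu_1(B_r)t}$, where $\mu_1(B_r)$ is the bottom of the spectrum of the generator of the walk killed on leaving $B_r$, and the discrete Dirichlet eigenvalues converge to the continuous ones, so $r^2\mu_1(B_r)\to\lambda_d$ as $r\to\infty$. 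Choosing $r=r(t)\to\infty$ with $r^2=o(t)$ (optimal scale $r^{d+2}\asymp t$), combining the two displays and optimizing over $r$ gives $\liminf_{t\to\infty}t^{-d/(d+2)}\log\E[e^{-|\mathfrak{R}_t|}]\ge-\psi_d$, the constant $\psi_d$ being the value, normalized by $t^{d/(d+2)}$, of the elementary minimization of $r\mapsto\omega_d r^d+\lambda_d t\,r^{-2}$.

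The upper bound is the substantial half. The naive route writes $\E[e^{-|\mathfrak{R}_t|}]=\sum_A e^{-|A|}\P_0(\mathfrak{R}_t=A)\le\sum_{A\ni 0}e^{-|A|}\P_0(\tau_{A^c}>t)$, bounds $\P_0(\tau_{A^c}>t)\le C_d\,e^{-\mu_1(A)t}$, and invokes the discrete Faber--Krahn inequality $\mu_1(A)\ge\lambda_d\omega_d^{2/d}|A|^{-2/d}(1-o_{|A|}(1))$; but summing over the $e^{O(|A|)}$ lattice animals of a given size loses a factor on precisely the scale $t^{d/(d+2)}$, so it does not yield the sharp constant. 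I would instead: (i) show that, on the event that $|\mathfrak{R}_t|$ is atypically small, the walk spends all but an $o(t)$ fraction of $[0,t]$ inside a ball of radius $O(t^{1/(d+2)})$ times a slowly diverging factor — trajectories that travel far, or confine themselves to a set far from bulky, cost $e^{-\Omega(t)}$ and are negligible against $e^{-\Theta(t^{d/(d+2)})}$; (ii) fold $\Z^d$ onto a torus $\mathbb{T}^d_m$ of side $m=m(t)\to\infty$ with $m$ of smaller order than the localization scale, so the state space is finite and the walk genuinely confined; (iii) apply the Donsker--Varadhan large deviation principle for the empirical occupation measure $L_t/t$ on $\mathbb{T}^d_m$, whose rate function $I$ is a constant multiple of the functional ${\rm Dir}$ of \eqref{eq:ldp}, using that $|\mathfrak{R}_t|=|\operatorname{supp}(L_t)|$ is lower semicontinuous in $L_t/t$ so $e^{-|\mathfrak{R}_t|}$ is upper semicontinuous; and (iv) conclude from the variational identity $\inf_\mu\big(|\operatorname{supp}\mu|+t\,I(\mu)\big)=\psi_d\,t^{d/(d+2)}(1+o(1))$, the infimum over probability vectors, with near-minimizer $\mu=\phi_r^2$ where $\phi_r$ is the $\ell^2$-normalized Dirichlet ground state of a ball of radius $r\asymp t^{1/(d+2)}$, so $|\operatorname{supp}\mu|=\omega_d r^d(1+o(1))$ and $I(\mu)=\mu_1(B_r)=(1+o(1))\lambda_d/r^2$. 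This yields $\limsup_{t\to\infty}t^{-d/(d+2)}\log\E[e^{-|\mathfrak{R}_t|}]\le-\psi_d$.

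The main obstacle is steps (iii)--(iv): the occupation measures that matter have rate function of order $t^{-2/(d+2)}\to0$, so one works in the degenerate regime of the LDP and must prove the estimate uniformly there — equivalently, one must control the combinatorial entropy of the \emph{shape} of $\mathfrak{R}_t$ without sacrificing any constant. This is the technical core of \cite{DV79}; the periodization device of step (ii), together with the sharp asymptotic form of the discrete Faber--Krahn inequality, is what makes it go through. By contrast the lower bound is routine.
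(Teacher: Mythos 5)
The paper does not actually give a proof of Theorem~\ref{th:DonVar}: it is stated in the Appendix and attributed to Donsker--Varadhan~\cite{DV79}, and the text preceding it explicitly presents it as a known result. So there is no proof in the paper against which to compare your sketch; what can be assessed is whether your outline is a faithful account of the classical argument.

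Your lower bound is correct and routine: confine the walk to a ball of radius $r$, pay $e^{-\omega_d r^d(1+o(1))}$ for the range and an eigenvalue term for survival, take $r\asymp t^{1/(d+2)}$, and optimize. Your upper bound identifies the right obstacles and the right devices -- the lattice-animal/Faber--Krahn bound does lose a constant on the scale $t^{d/(d+2)}$, periodization onto a growing torus is the correct compactification, the Donsker--Varadhan empirical-measure LDP is the correct input, and the near-minimizer $\mu=\phi_r^2$ is the right test function. But you say yourself that the relevant occupation measures sit in the degenerate regime where the rate function is of order $t^{-2/(d+2)}\to0$, and that one must therefore prove the estimate \emph{uniformly} there; you then defer this to ``the technical core of \cite{DV79}'' without carrying it out. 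That is exactly the hard half of the theorem, so your upper bound is a sound roadmap rather than a proof.

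One small but substantive flag on constants. You minimize $r\mapsto\omega_d r^d+\lambda_d t r^{-2}$ and say the resulting constant is $\psi_d$. That minimization yields $\frac{d+2}{2}\left(\frac{2\lambda_d}{d}\right)^{d/(d+2)}\omega_d^{2/(d+2)}$, with the exponent $d/(d+2)$ on the middle factor, whereas the paper's display writes the middle factor to the first power; this looks like a typo in the paper's statement rather than an error of yours. Separately, you should check the normalization of the limiting diffusion: the continuous-time walk with unit rate per neighbour scales to Brownian motion with generator $\Delta$ (not $\Delta/2$), so if $\lambda_d$ is defined as the principal eigenvalue of $-\Delta/2$ on the unit ball (as in the paper), the survival exponent in a ball of radius $r$ is $2\lambda_d/r^2$, not $\lambda_d/r^2$ as in your optimization. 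Chasing this factor of two through changes the numerical constant; it is worth pinning down before quoting a precise $\psi_d$.
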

\end{mdframed}
The following is a well-known inequality by Paul L\'evy (see, for example, Lemma 5 page 72 in \cite{ChowTeicher})
\begin{mdframed}
[style=MyFrame1]
\begin{theorem}[L\'evy's inequality] \label{th:Levyin}
 Let $(Y_i)_i$ a sequence of i.i.d. random variables with  median equal to zero. Let $a>0$. We have that 
 $$ 
 \P\left(\max_{j \in [m]} \left|\sum_{i=1}^j Y_i \right|\ge a\right) \le 2 \P\left(\left|\sum_{i=1}^m Y_i\right|\ge a\right).
 $$
\end{theorem}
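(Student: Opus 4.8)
The plan is to establish this classical inequality by the reflection (sign-flip) symmetrization argument. Write $S_k:=\sum_{i=1}^{k}Y_i$ for $0\le k\le m$, with $S_0:=0$, and let $\tau:=\min\{k\in[m]\colon |S_k|\ge a\}$, with $\tau:=\infty$ if no such $k$ exists, so that $\{\max_{j\in[m]}|S_j|\ge a\}=\bigsqcup_{k=1}^{m}\{\tau=k\}$ is a disjoint union and each event $\{\tau=k\}$ is measurable with respect to $\sigma(Y_1,\dots,Y_k)$. The input I would use is that the $Y_i$ are i.i.d.\ and symmetric about $0$ --- which is the reading of the hypothesis relevant here, and precisely the situation in which the lemma is invoked (for instance the variables $\mathfrak{Q}_\infty(x_k)$ and $L_u$ in Lemmas~\ref{eq:stimadiQN} and~\ref{le:bara1} are symmetric around zero) --- so that for each fixed $k$ the transformation flipping the signs of $Y_{k+1},\dots,Y_m$ preserves the joint law of $(Y_1,\dots,Y_m)$.

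First I would fix $k$ and set $\widehat S_m:=2S_k-S_m$, the endpoint of the path whose last $m-k$ increments have been reflected. Since this sign flip leaves $\{\tau=k\}$ unchanged (it depends only on $Y_1,\dots,Y_k$) and carries $S_m$ to $\widehat S_m$, the pairs $(\1_{\{\tau=k\}},S_m)$ and $(\1_{\{\tau=k\}},\widehat S_m)$ have the same distribution; this is the only place where independence and symmetry are used. Next, on $\{\tau=k\}$ one has $|S_k|\ge a$, and from $S_m+\widehat S_m=2S_k$ the triangle inequality gives $|S_m|+|\widehat S_m|\ge 2a$, hence $\max(|S_m|,|\widehat S_m|)\ge a$. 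Consequently $\P(\tau=k)\le \P(\{\tau=k\}\cap\{|S_m|\ge a\})+\P(\{\tau=k\}\cap\{|\widehat S_m|\ge a\})=2\,\P(\{\tau=k\}\cap\{|S_m|\ge a\})$, using the equality in law. Summing over $k\in[m]$ and noting that the events $\{\tau=k\}\cap\{|S_m|\ge a\}$ are pairwise disjoint subsets of $\{|S_m|\ge a\}$ gives $\P(\max_{j\in[m]}|S_j|\ge a)\le 2\,\P(|S_m|\ge a)$, as claimed.

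I do not anticipate a genuine obstacle, since the result is classical; the only point deserving care is the symmetry hypothesis. The two-sided bound with constant $2$ really does use that the tail increment $S_m-S_k$ is symmetric about $0$: for merely median-zero i.i.d.\ summands one obtains only a one-sided version, or must re-center by the medians of the $S_m-S_k$. As every application in this paper concerns summands that are symmetric around zero, I would carry the argument out under that assumption (or symmetrize first), which makes the reflection step exact and the constant $2$ sharp.
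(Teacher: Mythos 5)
Your reflection argument is the standard proof of L\'evy's inequality, and it is correct as you present it under the symmetry hypothesis. The paper itself does not give a proof: it states the result and cites Chow and Teicher (Lemma 5, page 72), so there is no ``paper proof'' to compare against --- your note fills in a citation.

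Your remark about the hypothesis is the substantive observation here, and it is right. As literally stated, with only ``median zero'' for each $Y_i$, the displayed two-sided inequality with constant $2$ is not true in general: the reflection step $(\1_{\{\tau=k\}},S_m)\stackrel{d}{=}(\1_{\{\tau=k\}},2S_k-S_m)$ requires that the tail increment $Y_{k+1}+\cdots+Y_m$ be symmetric about $0$, and median-zero summands do not force their partial sums to have median zero, let alone be symmetric. The correct hypothesis for the stated conclusion is that the $Y_i$ (equivalently the increments) are symmetric, which is what Chow--Teicher's Lemma 5 actually assumes; under median-zero alone one only gets the L\'evy--Ottaviani variant with a median correction, i.e.\ $\P\bigl(\max_{j\le m}\bigl(S_j-\operatorname{med}(S_j-S_m)\bigr)\ge a\bigr)\le 2\,\P(S_m\ge a)$. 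You correctly point out that this is harmless in context: every invocation in the paper (the $\mathfrak{Q}_\infty(x_k)$ in Lemma~\ref{eq:stimadiQN} and the $L_u$ in Lemma~\ref{le:bara1}) concerns i.i.d.\ variables that are stated to be symmetric about zero, so the applications are sound. If you wanted to match the paper's wording exactly you could either (i) replace ``median equal to zero'' by ``symmetric about zero'' in the theorem statement, which is the cleanest fix, or (ii) keep median-zero but prove only the one-sided inequality $\P(\max_j S_j\ge a)\le 2\,\P(S_m\ge a)$ and its mirror, which is what is actually needed downstream. Either way, your proof as written is the right one for the symmetric case.
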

\end{mdframed}
\noindent {\bf \sc Acknowledgements}  We  would like to thank Franz Merkl and Silke Rolles for a discussion on Fourier analysis of the  local time formula in the case of simple random walk. 
The authors would like to thank  Amir Dembo, Daniel Kious, Tim Garoni, Minh Nguyen,   Bruno Schapira and Peter Wildemann for helpful discussions.  The
work of the authors   was partially supported by ARC DP230102209 and the National Science Foundation of China (NSFC), Grant No. 11771293
\bibliography{ORRW-Arxiv}  
\bibliographystyle{alpha}
\end{document}